\documentclass[11pt]{amsart}
\usepackage{amssymb, amsmath, amsmath}
\usepackage[backref]{hyperref}
\hypersetup{hypertexnames=false}
\usepackage{mathrsfs}
\usepackage{amscd}  
\usepackage{fullpage}
\usepackage[all]{xy} 
\usepackage{setspace}
\usepackage[dvipsnames]{xcolor}
\usepackage[margin=1.86  cm, headsep = 0.1 cm, footskip = 0.7 cm]{geometry}
\usepackage[english]{babel}
\usepackage{amsfonts}
\usepackage{xurl}
\usepackage[T1]{fontenc}
\usepackage{comment}
\usepackage[shortlabels]{enumitem}
\usepackage{tikz-cd}
\usepackage{quiver}
\usepackage{mathtools}
\usepackage{thmtools, thm-restate} 

\definecolor{darkred}{rgb}{0.5,0,0}
\definecolor{darkgreen}{rgb}{0,0.5,0}
\definecolor{darkblue}{rgb}{0,0,0.5}
\hypersetup{linkcolor=darkgreen,filecolor=darkgreen,urlcolor=darkgreen,citecolor=darkgreen,colorlinks}

\definecolor{c1}{RGB}{82,114,252}

\definecolor{c2}{RGB}{200,150,200}

\definecolor{c3}{RGB}{10,180,250}

\definecolor{c4}{RGB}{100,250,80}

\newcommand{\Z}{\mathbb{Z}}

\newcommand\Q{\mathbb{Q}}

\renewcommand\P{\mathbb{P}} 
\newcommand{\A}{\mathbb{A}} 
\newcommand\G{\mathbb{G}} 
\renewcommand{\H}{\mathrm{H}}
 
\newcommand{\cO}{\mathcal{O}}
\newcommand{\RefinedObs}[1]{\widetilde{Z}_{0, \Omega}{(#1)}}
\newcommand{\RefinedObsDeg}[2]{\widetilde{Z}^{#2}_{0, \Omega}{(#1)}}
\newcommand{\ol}{\overline}
\newcommand{\rec}{\mathrm{rec}}
\newcommand{\refrec}{\widetilde{\mathrm{rec}}}
\newcommand{\et}{\text{\'et}} 
\newcommand{\lr}[1]{\langle #1 \rangle} 
\newcommand{\globcyc}{\mathbf{z}_{n,S}}
\newcommand{\globcycu}{\mathbf{u}_{n,S}} 
\newcommand{\auto}{\varphi}

\DeclareMathOperator{\spec}{Spec}

\DeclareMathOperator{\pic}{Pic} 
\DeclareMathOperator{\NS}{NS} 
 
\DeclareMathOperator{\Aut}{Aut}

\DeclareMathOperator{\divisor}{div} 
 
\DeclareMathOperator{\sym}{Sym} 

\DeclareMathOperator{\Br}{Br}

\DeclareMathOperator{\PGL}{PGL}

\DeclareMathOperator{\rk}{rk}

\DeclareMathOperator{\Bl}{Bl}

\DeclareMathOperator{\im}{im} 

\DeclareMathOperator{\CH}{CH} 
\DeclareMathOperator{\Gal}{Gal}
\DeclareMathOperator{\Cor}{Cor} 
\DeclareMathOperator{\conn}{conn}
\newcommand{\Brnr}{{\Br_{\mathrm{nr}}}}
\DeclareMathOperator{\Fix}{Fix}
\DeclareMathOperator{\Alb}{Alb}

\newtheorem{theorem}{Theorem}[section]
\newtheorem{proposition}[theorem]{Proposition}
\newtheorem{lemma}[theorem]{Lemma}
\newtheorem{corollary}[theorem]{Corollary}
\theoremstyle{definition}
\newtheorem{definition}[theorem]{Definition}

\newtheorem{notation}[theorem]{Notation}

\newtheorem{example}[theorem]{Example}
\newtheorem{examples}[theorem]{Examples}
\theoremstyle{remark}
\newtheorem{remark}[theorem]{Remark}

\numberwithin{theorem}{section}
\numberwithin{equation}{section}

\usepackage{tcolorbox}
\usepackage{tikz-cd}
\usepackage{adjustbox}
\usepackage{marvosym}
\usepackage{ stmaryrd }
\usepackage{cancel}
\usepackage[normalem]{ulem}

\usepackage[OT2,T1]{fontenc}
\DeclareSymbolFont{cyrletters}{OT2}{wncyr}{m}{n}
\DeclareMathSymbol{\Sha}{\mathalpha}{cyrletters}{"58}
\DeclareMathSymbol{\locconstBr}{\mathalpha}{cyrletters}{"42}

\title{Refined obstructions to local-global principles for 0-cycles} 

\author{Francesca Balestrieri}
\address{Francesca Balestrieri\\ The American University of Paris\\
129 Rue de l'Universit\'{e}\\ 75007 Paris, France}
\email{fbalestrieri@aup.edu}

\author{Anouk Greven}
\address{Anouk Greven\\ Mathematisches Institut\\
Georg-August-Universität Göttingen\\
Bunsenstrasse 3-5\\ 37073\\ Göttingen\\ Germany}
\email{anouk.greven@mathematik.uni-goettingen.de}

\author{Rachel Newton}
\address{Rachel Newton \\ Department of Mathematics\\
King's College London\\
Strand\\ London\\ WC2R 2LS\\
UK}
\email{rachel.newton@kcl.ac.uk}

\author{Soumya Sankar}
\address{Soumya Sankar\\ Mathematical Institute\\ Utrecht University\\
Hans Freudenthalgebouw\\ Utrecht 3584CD \\ The Netherlands}
\email{s.sankar@uu.nl}

\author{Katerina Santicola}
\address{Katerina Santicola\\ Department of Mathematics\\
King's College London\\
Strand\\ London\\ WC2R 2LS\\
UK}
\email{katerina.1.santicola@kcl.ac.uk}

\author{Manoy Trip}
\address{Manoy Trip\\ Bernoulli Institute\\ University of Groningen\\ Nijenborgh 9\\9747AG\\ Groningen\\The Netherlands}
\email{m.t.trip@rug.nl}

\keywords{Zero cycles, torsors, Hasse principle, weak approximation, obstructions to local-global principles}
\subjclass[2020]{Primary:
14G12;  	
Secondary:
11G35, 
14G05, 
14C25, 
14K15,  
11G10.  
}

\date{\today}

\begin{document}

\begin{abstract}
We introduce new `refined' obstructions
to local-global principles for 0-cycles on algebraic varieties over number fields.
Assuming finiteness of relevant Tate--Shafarevich groups, we show that the Hasse principle and weak approximation for 0-cycles on generalised Kummer varieties and bielliptic surfaces are controlled by obstructions of this new type.  
As an additional application of our refined obstructions, we answer a question of Zhang about the relationship between the Brauer--Manin and connected descent obstructions for 0-cycles. 
We also show that a Corwin--Schlank style refined obstruction set coincides with the set of global 0-cycles, conditionally on the Section Conjecture.
\end{abstract}

\maketitle


\tableofcontents

\section{Introduction}
\label{sec:intro}

Let $k$ be a number field and let $\Omega$ be its set of places. The Hasse principle for rational points is said to hold in a family of varieties over $k$ if, for any variety $X$ in the family, 
\[X(k)=\emptyset\iff X(k_\Omega)\coloneq\prod_{v\in\Omega} X(k_v)=\emptyset.\]
Weak approximation for rational points is said to hold if $X(k)$ is dense in $X(k_\Omega)$ with respect to the product of the $v$-adic topologies on the sets of local points $X(k_v)$. In families where the Hasse principle or weak approximation can fail, one hopes to find an obstruction that explains these failures. More precisely, one seeks uniformly-defined obstruction sets $X(k_{\Omega})^{\operatorname{obs}}$ (such as Brauer--Manin sets, \'{e}tale-Brauer sets, or descent sets coming from torsors under linear algebraic groups) for each variety $X$ in the family such that
\[X(k)\subset X(k_{\Omega})^{\operatorname{obs}}\subset X(k_\Omega).\]
If, for all varieties $X$ in the family, we have
\[X(k)= \emptyset \iff X(k_{\Omega})^{\operatorname{obs}}=\emptyset,\] 
then any failure of the Hasse principle for rational points in this family is explained by emptiness of $X(k_{\Omega})^{\operatorname{obs}}$. If, for all varieties $X$ in the family, $X(k_\Omega)^{\operatorname{obs}}\subset \ol{X(k)}$, then any failure of weak approximation for rational points in this family is explained by a strict inclusion $\ol{X(k_\Omega)^{\operatorname{obs}}}\subsetneq X(k_\Omega)$. In this situation,  $X(k_{\Omega})^{\operatorname{obs}}$ is an optimal local proxy for $X(k)$ with respect to density questions, as the two sets have the same closure. 

There are numerous results and conjectures about how obstruction sets govern the qualitative arithmetic  behaviour of $X(k)$ for specific families of varieties (e.g. \cite{Manin71, CT-Fibrations-03, Sko-Torsors, SkoK3}).
In full generality, however, the question of which obstruction sets control the Hasse principle and weak approximation for rational points remains wide open (see e.g. \cite{Sko-bielliptic-99, Poonen10, CorwinSchlank}).

The set $Z_0(X)$ of 0-cycles on $X$ over $k$ is a natural generalisation of the set of rational points $X(k)$. As such, the fundamental questions for 0-cycles are analogous to those for rational points: if $Z_0^\delta(X)$ is the set of 0-cycles of degree $\delta\in\Z$, we can ask whether it is empty, or whether it is dense in $Z_{0, \Omega}^\delta (X):=~\prod_{v \in \Omega} Z^{\delta}_{0}(X_{k_v})$. Many of the obstruction sets for rational points admit natural analogues for 0-cycles: in~\cite{CT-Chow-93}, the Brauer--Manin set for rational points was generalised to a Brauer--Manin set for 0-cycles and in~\cite{balestrieri-berg, Linh_2026, Zhang-0cycles-25}, the descent sets associated to linear algebraic groups were also extended to 0-cycles, as were hybrid obstruction sets such as \'{e}tale-Brauer sets.

Now suppose we have a family of varieties together with obstruction sets $Z_{0, \Omega}^\delta(X)^{\operatorname{obs}}$ such that 
\[Z_0^\delta(X)\subset Z_{0, \Omega}^\delta(X)^{\operatorname{obs}}\subset Z_{0, \Omega}^\delta(X).\]
If, for all varieties $X$ in the family, we have 
\[Z^\delta_{0}(X) = \emptyset\iff Z^\delta_{0, \Omega}(X)^{\operatorname{obs}} = \emptyset,  \]
then any failure of the Hasse principle for 0-cycles of degree $\delta$ in this family is explained by emptiness of $Z_{0, \Omega}^\delta(X)^{\operatorname{obs}}$. If, for all varieties $X$ in the family, $Z_{0, \Omega}^\delta(X)^{\operatorname{obs}}\subset \ol{Z_0^\delta(X)}$, then any failure of weak approximation for 0-cycles of degree $\delta$ in this family is explained by a strict inclusion $\ol{Z_{0, \Omega}^\delta(X)^{\operatorname{obs}}}\subsetneq Z_{0, \Omega}^\delta(X)$, where closures are taken with respect to the weak approximation topology (see Section~\ref{subsec:prelim-weak-approximation}). 

In this paper, we introduce a new type of obstruction set for 0-cycles which is potentially smaller than the one introduced in~\cite{balestrieri-berg}, and show that obstructions of this type explain all failures of the Hasse principle and weak approximation for 0-cycles in certain families of varieties. 
Henceforth, let $X$ be a smooth, quasi-projective, geometrically integral variety defined
over $k$. 

\begin{definition}[Simplified version of Definition \ref{defn:refined-obstruction}]\label{defn:introdefrefined}
Let $\operatorname{obs}$ be an obstruction for rational points on varieties over number fields (e.g.\ the Brauer--Manin obstruction, a descent obstruction, etc).
We define the \emph{refined $\operatorname{obs}$-obstruction set for 0-cycles} on $X$ to be the set 
\[ \RefinedObs{X}^{\operatorname{obs}} \coloneq\refrec_{X, \Omega}\left(\bigoplus_{L/k \ \textrm{finite}}\Z[X(L_{\Omega})^{\operatorname{obs}}]\right)
\subset Z_{0,\Omega}(X)\coloneq \bigsqcup_{\delta\in\Z}Z_{0,\Omega}^\delta(X),\]
where $\refrec_{X, \Omega}$ is the recombining map defined in Definition \ref{defn:refinedlocalrecombiningmap} and $\Z[X(L_{\Omega})^{\operatorname{obs}}]$ denotes the free abelian group generated by $X(L_{\Omega})^{\operatorname{obs}}$. Let $\RefinedObsDeg{X}{\delta}^{\operatorname{obs}} \coloneq \RefinedObs{X}^{\operatorname{obs}}\cap Z_{0,\Omega}^\delta(X)$.
For concrete examples, we refer the reader to Examples~\ref{example:obstruction-sets-main}. 
\end{definition}

The advantage of this definition is that it uses points over different field extensions $L/k$, which allows one to
make full use of the theory of obstructions for rational points
to deduce new results for 0-cycles. This was not possible with previously studied obstruction sets, such as those in~\cite{balestrieri-berg}.
We exhibit the power of our new definition by giving some applications which were unattainable using the definitions in~\cite{balestrieri-berg, Zhang-0cycles-25} alone. 

The first application concerns generalised Kummer varieties: let $A$ be an abelian variety of dimension at least two over $k$, let $\auto \in \Aut_k(A)$ be a non-trivial automorphism of finite order such that the set of fixed points of any non-trivial power of \(\auto\) is finite, and let $T \in \H^1(k, \ker[1 - \varphi])$. To this data, we associate a generalised Kummer variety $X$ (see Definition \ref{defn:Kummer}), which admits a rational map $f: Y \dashrightarrow X$ 
where $Y$ is a $k$-torsor under $A$ and $f: f^{-1}(U) \to U$ is a $U$-torsor under $G:= \lr{\auto}$ 
for some open dense $U \subset X$. For the largest such \(U \subset X\) and with $\locconstBr$ denoting locally constant Brauer classes 
(see the definitions in Section \ref{sec:Xobs}), we prove the following result. Here the refined obstruction set $\RefinedObsDeg{U}{\delta}^{f,\locconstBr}$ combines descent along $f$ with obstruction sets defined by locally constant Brauer classes on twists of $Y$ over finite extensions, see \eqref{eq:fBUobs}.

\begin{theorem}[Theorem \ref{thm:hasseprinciplebody}]
\label{thm:main intro-hasse-principle}
Let \(A, X,  U, G, f\) be as above. Assume that for every finite extension \(L/k\) and every \(\sigma \in \H^1(L, G)\), the Tate--Shafarevich group \(\Sha(A^{\sigma}_L)\) is finite. Let $\delta \in \Z$. Then \[ Z_0^\delta(X)=\emptyset \iff {Z_0^{\delta}(U) = \emptyset \iff \RefinedObsDeg{U}{\delta}^{f,\locconstBr}=\emptyset.}\] Thus, any failure of the Hasse principle for 0-cycles of degree $\delta$ on $X$ (or $U$) is explained by emptiness of \(\RefinedObsDeg{U}{\delta}^{f,\locconstBr}\).
\end{theorem}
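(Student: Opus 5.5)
The plan is to prove the chain of implications
\[ Z_0^\delta(U)\neq\emptyset \Rightarrow Z_0^\delta(X)\neq\emptyset \Rightarrow \RefinedObsDeg{U}{\delta}^{f,\locconstBr}\neq\emptyset \Rightarrow Z_0^\delta(U)\neq\emptyset. \]
The first implication is trivial, since $U\subset X$. For the second, I would use the moving lemma for 0-cycles on smooth quasi-projective varieties: a 0-cycle of degree $\delta$ on $X$ is rationally equivalent to one supported on the dense open $U$. Then $Z_0^\delta(U)\neq\emptyset$. Such a cycle is a sum of closed points $x$ of $U$, each giving $x\in U(L)$ with $L=k(x)$. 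Each such point lifts to some twist $Y^\sigma(L)$ with $\sigma\in\H^1(L,G)$, so its diagonal image lies in the descent set $U(L_\Omega)^{f,\locconstBr}$, because global points always survive the obstruction. Applying $\refrec_{U,\Omega}$ and the compatibility of the refined recombining map with corestriction/degree, the diagonal image of the global cycle lies in $\RefinedObsDeg{U}{\delta}^{f,\locconstBr}$. This shows the set is nonempty.

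The substantive direction is the third. Take an element of $\RefinedObsDeg{U}{\delta}^{f,\locconstBr}$. By definition it is $\refrec_{U,\Omega}$ of a formal sum $\sum_i n_i [P_i]$ with $P_i\in U(L_{i,\Omega})^{f,\locconstBr}$ and total degree $\delta$, where degree is measured via $[L_i:k]$. By the definition of the descent set, each $P_i$ lifts to an adelic point $Q_i\in Y^{\sigma_i}(L_{i,\Omega})$ for some $\sigma_i\in\H^1(L_i,G)$, and $Q_i$ is orthogonal to the locally constant Brauer classes of $Y^{\sigma_i}_{L_i}$. Now $Y^{\sigma_i}_{L_i}$ is a torsor under the twisted abelian variety $A^{\sigma_i}_{L_i}$. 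By Manin's theorem (via Cassels--Tate duality, using finiteness of $\Sha(A^{\sigma_i}_{L_i})$), orthogonality to the locally constant, i.e.\ algebraic $\Br_1$-part coming from $\Sha$, classes forces the torsor class to vanish, so $Y^{\sigma_i}(L_i)\neq\emptyset$. I would need to check that the set $\locconstBr$ of Section~\ref{sec:Xobs} contains the classes pairing with the torsor class, which is plausible since these come from $\Sha^1(L_i,\hat A)$ and are locally constant.

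Next I would get from $Y^{\sigma_i}(L_i)\neq\emptyset$ to a point of $U$ over a finite extension of $L_i$, of controlled degree. Since $Y^{\sigma_i}_{L_i}\cong A^{\sigma_i}_{L_i}$ is an abelian variety, its rational points over suitable extensions are Zariski dense. A cleaner route is to take any closed point of $f^{-1}(U)^{\sigma_i}$ of degree $d$ over $L_i$, and to correct the degrees using points of coprime degrees, which exist on a geometrically integral variety over a number field. Equivalently, I would produce a 0-cycle of degree $1$ over $L_i$ on the open $f^{\sigma_i,-1}(U)$ by moving a rational point into the open, again via the moving lemma. Pushing forward along $f^{\sigma_i}$ and then corestricting from $L_i$ to $k$ gives a 0-cycle $z_i$ on $U$ of degree $[L_i:k]$. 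Then $\sum_i n_i z_i\in Z_0^\delta(U)$.

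I expect the main obstacle to be the Brauer/$\Sha$ step. One must match the precise notion of locally constant Brauer classes on twists over $L_i$ with the classes of $\Br_1(Y^{\sigma_i}_{L_i})$ that detect the class of $Y^{\sigma_i}$ in $\Sha(A^{\sigma_i}_{L_i})$. One must also handle that the adelic point need only be orthogonal at the level prescribed by the definition of $\RefinedObsDeg{U}{\delta}^{f,\locconstBr}$ in \eqref{eq:fBUobs}.
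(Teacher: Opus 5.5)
Your proposal is correct. The easy implications and the Brauer step agree with the paper. The former is Lemma~\ref{lem:globalinobsX} (together with Corollary~\ref{cor:samedeg}). The passage from an element of $\RefinedObsDeg{U}{\delta}^{f,\locconstBr}$ to $Y^{\sigma_i}_{L_i}(L_i)\neq\emptyset$ is Lemma~\ref{lemma:from-0-cycle-to-adelic-points}. The point you flag is settled there as you anticipate. The lift lies in $V^{\sigma_i}_{L_i}((L_i)_\Omega)^{\mathcal{B}}$, and pulling back along $V^{\sigma_i}_{L_i}\hookrightarrow Y^{\sigma_i}_{L_i}$ makes it orthogonal to $\mathcal{B}(Y^{\sigma_i}_{L_i})$. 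The inclusions \eqref{eq: russian B sets} then reduce every variant of $\locconstBr$ to Manin's group $\locconstBr_{Ma}$ in Theorem~\ref{thm:HP/WA-torsors}\ref{part:HP/WA-torsors-2}.

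The genuine difference is the final step. The paper keeps only the fact that $\gcd_i [L_i:k]$ divides $\delta$. Through Lemma~\ref{lem:L'}, it then invokes the rank-jump Theorem~\ref{thm:intro-main-rank-jumps} to find an honest point of $V^{\sigma_i}_{L_i}$ over an extension $L_i'/L_i$ of prime degree $\ell>n_i$, so that $U(L_i')\neq\emptyset$ with the gcd unchanged.

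You instead apply the moving lemma (Corollary~\ref{cor:samedeg}, which holds equally over the number field $L_i$) to the dense open $V^{\sigma_i}_{L_i}$ of the smooth projective variety $Y^{\sigma_i}_{L_i}$, which has an $L_i$-point. This gives a degree-one 0-cycle on $V^{\sigma_i}_{L_i}$. Pushing forward along the degree-preserving finite \'{e}tale map $f^{\sigma_i}_{L_i}$ and applying $N_{L_i/k}$ yields $z_i\in Z_0^{[L_i:k]}(U)$. Hence $\sum_i n_i z_i\in Z_0^\delta(U)$, with the original coefficients.

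This is valid and considerably more economical. It makes the rank-jump theorem and Lemma~\ref{lemma:dave} unnecessary for this result, and it removes the gcd bookkeeping. What the paper's route gives in addition is a pointwise conclusion: actual points of $U$ over single fields of controlled degree. The statement does not require this.

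One correction: your aside that points of coprime degree ``exist on a geometrically integral variety over a number field'' is false in general; a conic without rational points has index $2$. What makes it true here is the $L_i$-point supplied by Manin's theorem. So phrase that step, as in your final formulation, via the moving lemma applied to that point.
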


\begin{remark}
The fact that our refined obstruction sets are defined in terms of rational points over extensions enables us to exploit the quasi-torsor structure of $f$ and use a result of Manin showing that  all failures of the Hasse principle for rational points on $Y_L^\sigma$ (for $L/k$ finite and $\sigma\in \H^1(L,G)$) are explained by locally constant Brauer--Manin obstructions \cite{Manin71}. A similar approach using the definition in~\cite{balestrieri-berg} would soon hit a wall. We would need either a result stating that the locally constant Brauer--Manin obstruction for 0-cycles (defined as in~\cite{CT-Chow-93, Liang-Arithmetic-13}) explains all failures of the Hasse principle for 0-cycles of any given degree on all \(Y_L^\sigma\) (which, as far as the authors are aware, is only known for 0-cycles of degree 1 by \cite[Proposition 4.6.1]{Liang-Arithmetic-13}), or a way to guarantee surjectivity of the restriction maps $\mathrm{res}_{M/L}: \locconstBr(Y_L^\sigma)/\Br_0(Y_L^\sigma) \to \locconstBr((Y_L^\sigma)_M)/\Br_0((Y_L^\sigma)_M)$ for finite extensions $M/L$.
\end{remark}

One of the steps in the proof of Theorem~\ref{thm:main intro-hasse-principle} involves increasing the ranks of the abelian varieties \(A_L^{\sigma}\)
over finite extensions of \(k\) of controlled degree. 
Using a strong version of Bertini's theorem and work of \cite{daveacta}, we prove the following statement on rank jumps for abelian varieties, which may be of independent interest. Note that this theorem needs no assumption on finiteness of Tate--Shafarevich groups.

\begin{restatable}{theorem}{rankjump}
\label{thm:intro-main-rank-jumps}
Let \(A\) be an abelian variety over a number field \(k\). There exists $N\in\Z_{>0}$ such that for all primes $\ell \geq N$, there exist infinitely many field extensions $L/k$ with $[L:k]=\ell$ and $\rk A(L)>\rk A(k)$. 
\end{restatable}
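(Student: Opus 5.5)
We realise the desired extensions as residue fields of closed points of degree $\ell$ on a curve inside $A$, and then use a Hilbert-irreducibility result for degree-$\ell$ points (the work of \cite{daveacta}) to get infinitely many of them. The plan is as follows.

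First, by the strong Bertini theorem, choose a smooth, geometrically integral curve $C \subset A$ over $k$ that is a complete intersection of very ample hypersurface sections through the origin. We want $C$ to generate $A$ and the map $\Alb(C)=J_C \to A$ to be surjective. The Lefschetz theorem for complete intersections of dimension one gives surjectivity of $\pi_1(C)\to\pi_1(A)$, hence $J_C \to A$ is surjective with connected kernel.

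Next, choose a degree-$\ell$ cover $\pi: C\to \P^1$, for instance via a linear projection or a suitable function on $C$. Fibres over $t\in\P^1(k)$ give closed points $P_t$ of degree $\ell$ on $C$ with residue field $L_t$. By Hilbert irreducibility, for $t$ outside a thin set, $L_t/k$ has degree $\ell$ with Galois closure group $S_\ell$. The results of \cite{daveacta}, on points of degree $\ell$ on curves (or abelian varieties) coming from $\P^1$-parametrised families, supply infinitely many distinct fields $L_t$ and control of the corresponding points.

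Consider the trace-zero point $Q_t := \ell\cdot P_t - \mathrm{Tr}_{L_t/k}(P_t)$, which lies in $A(L_t)$ and vanishes under the trace. We need it to be non-torsion. Then $Q_t$ lies in the kernel of $\mathrm{Tr}: A(L_t)\otimes\Q \to A(k)\otimes\Q$. Since $\mathrm{Tr}\circ\mathrm{res}=\ell$, the group $A(k)\otimes\Q$ injects into $A(L_t)\otimes\Q$ and meets that kernel trivially, so $\rk A(L_t)>\rk A(k)$.

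The main obstacle is proving $Q_t$ is non-torsion for infinitely many $t$ with pairwise distinct $L_t$. The approach is to work with the $\ell$ conjugate points $P_t^{(1)},\dots,P_t^{(\ell)}$ in $A(\bar k)$. Because the Galois group is $S_\ell$, if $Q_t$ were torsion then all differences $P_t^{(i)}-P_t^{(j)}$ would be torsion, and they would lie in $A(M)_{\mathrm{tors}}$ for the Galois closure $M$ of degree dividing $\ell!$. By Merel/Silverberg-type bounds this torsion is uniformly bounded in terms of $\ell!$ and $[k:\Q]$, though not uniformly in $\ell$.

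To get uniformity, take $\ell$ large relative to a constant $N$ depending only on $A$ and $C$, so that:
\begin{itemize}
\item the degree-$\ell$ map to $\P^1$ cannot have all fibres differing by torsion, by a height argument;
\item in any case, only finitely many $t$ can have torsion differences. Indeed, the locus $\{t : P^{(i)}_t - P^{(j)}_t \in A[m]\}$ lies on a proper subvariety unless $C$ is contained in a translate of a torsion coset, which is excluded because $C$ generates $A$.
\end{itemize}

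Raynaud's theorem (Manin--Mumford) on $C\times C$ mapping to $A$ via the difference map shows that only finitely many torsion values occur, independently of $t$. Combined with the infinitude of the fields $L_t$, this gives the claim. The constant $N$ absorbs the small primes for which constructing a suitable degree-$\ell$ cover, or excluding degeneracies, fails; for instance, we need $\ell$ at least the gonality of $C$, with room for base-point-free pencils of every degree $\ell\ge N$.
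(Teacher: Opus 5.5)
\textbf{Verdict: there is a genuine gap, but it is repairable.} Your architecture is sound: a curve $C\ni O_A$, a degree-$\ell$ map $\pi\colon C\to\P^1$, Hilbert irreducibility, and the trace-zero point $Q_t=\ell P_t-\mathrm{Tr}_{L_t/k}(P_t)$. The reduction ``$Q_t$ torsion $\Rightarrow$ every $P_t^{(i)}-P_t^{(j)}$ torsion'' is also correct, since all conjugates of $Q_t$ are then torsion and $Q_t^{(i)}-Q_t^{(j)}=\ell(P_t^{(i)}-P_t^{(j)})$. No $S_\ell$ hypothesis is needed here, and Hilbert irreducibility only gives you the monodromy group of $\pi$ anyway.

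The gap is in the step meant to show that $Q_t$ is non-torsion for all but finitely many $t$.
\begin{enumerate}[(1)]
\item Your decisive claim is that Manin--Mumford for the difference map $C\times C\to A$ leaves only finitely many torsion values. This is false already for abelian surfaces. A curve of genus $\ge 2$ is not stable under a positive-dimensional group of translations, so $C-C$ is a surface. Hence $C-C=A$, which contains all of $A_{\mathrm{tors}}$, and Manin--Mumford gives nothing.
\item Your other justification is that the bad locus is proper ``unless $C$ lies in a torsion coset, which is excluded since $C$ generates $A$''. This tests the wrong condition. What can actually go wrong is $C+a=C$ for a nonzero torsion point $a$, with $\pi$ compatible with that translation. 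A generating curve can satisfy $C+a=C$, e.g.\ the preimage of an ample curve under an isogeny.
\item The ``height argument'' is never supplied.
\item The uniformity in $\ell$ that pushed you toward Manin--Mumford is not needed at all, because $\ell$ is fixed throughout.
\end{enumerate}

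\textbf{The repair.} It uses an ingredient you already state.
\begin{enumerate}[(1)]
\item Fix $\ell$ and take $\pi$ to be a function with polar divisor exactly $\ell O_A$. It exists by Riemann--Roch once $\ell\ge 2g(C)$; this is how the paper obtains a degree-$\ell$ map, whereas linear projections do not give every degree.
\item Torsion differences lie in $A(M_t)_{\mathrm{tors}}$, where $M_t$ is the Galois closure of $L_t/k$ and $[M_t:k]\le \ell!$. Reduction at two good primes bounds this group by a constant $B(A,\ell)$, as in Step~1 of the proof of Lemma~\ref{lemma:dave}. No Merel-type uniformity in $A$ is needed, nor is it known for $\dim A\ge 2$.
\item So the differences lie in a finite set $T$ of nonzero torsion points. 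For each $a\in T$, the set of $y\in C$ with $y+a\in C$ and $\pi(y+a)=\pi(y)$ is finite. Either $C\cap(C-a)$ is finite, or $\tau_a$ preserves $C$. In the latter case $\pi\circ\tau_a$ has polar divisor $\ell\cdot(-a)\neq \ell\cdot O_A$, so $\pi\circ\tau_a\neq\pi$ and they agree at only finitely many points. Hence only finitely many $t$ are bad.
\item You must also justify that infinitely many distinct fields $L_t$ occur. This is not what \cite{daveacta} provides for your curve. It follows from Faltings' theorem when $g(C)\ge 2$ (as in the paper), or from Hilbert irreducibility over the compositum of the fields already found.
\end{enumerate}

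\textbf{Comparison with the paper.} Once repaired, your proof is genuinely different. The paper obtains infinitely many degree-$\ell$ fields with $A(k)\subsetneq A(L)$ from \cite{VirayVogt2024} and Faltings. It then invokes Mendes da Costa's Lemma~\ref{lemma:dave} (Kummer theory and inflation--restriction) to discard the finitely many fields whose new points are torsion modulo $A(k)$. Your trace-zero point replaces that lemma with the geometry of the fibres of $\pi$. With the Hilbert-irreducibility argument for distinct fields, it also handles $\dim A=1$ directly (take $C=A$), where the paper simply cites \cite{daveacta}.
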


\begin{remark}
The precise statement needed to prove Theorem~\ref{thm:main intro-hasse-principle} is much weaker than Theorem~\ref{thm:intro-main-rank-jumps}: it is sufficient to show that given an abelian variety \(A\) over a number field \(k\), a finite subgroup scheme \(F \subset A\) and an integer \(n\in \Z_{>0}\), we can always find an extension \(L/k\) of degree coprime to $n$ such that \(A(L) \setminus F (L)\) is non-empty. Ensuring that the rank increases in such an extension is one way to show this. For elliptic curves, this follows from results on rank jumps, e.g.\ by \cite{LemkeOliver-Thorne-ranks,daveacta}. For an abelian variety $A$ of arbitrary dimension over $k$, results in~\cite{FreyJarden} show that \(A(\ol{k})\) is infinitely generated but give us no control over the degrees of extensions in which the rank jumps. On the other hand, \cite{BruinNajman-ranks} discusses constraints on rank jumps in prime degree extensions, but does not guarantee any jump. 
\end{remark}

Now we turn our attention to questions of density and weak approximation for 0-cycles. 

\begin{theorem}
\label{thm:main intro-weak-approx} 
Let \(A, X, U, G, f\) be as in Theorem~\ref{thm:main intro-hasse-principle} and assume that for every finite extension \(L/k\) and every \(\sigma \in \H^1(L, G)\), the Tate--Shafarevich group \(\Sha( A^{\sigma}_L)\) is finite. Let $\delta \in \Z$. Then
\[
\ol{Z_{0}^{\delta}(X)} = \ol{Z_0^{\delta}(U)} = \ol{\RefinedObsDeg{U}{\delta}^{f, \Brnr}}
\] 
where the closure inside $Z_{0, \Omega}^\delta(X)$ is with respect to the weak approximation topology (Definition~\ref{def:weakapproxtop}). Thus, any failure of weak approximation for 0-cycles of degree $\delta$ on $X$ is explained by a strict containment \(\ol{\RefinedObsDeg{U}{\delta}^{f, \Brnr}}\subsetneq Z_{0,\Omega}^\delta(X)\).
\end{theorem}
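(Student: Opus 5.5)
We will prove the chain of closures by establishing the inclusions
\[
\ol{Z_0^{\delta}(U)} \subset \ol{Z_{0}^{\delta}(X)} \subset \ol{\RefinedObsDeg{U}{\delta}^{f, \Brnr}} \subset \ol{Z_0^{\delta}(U)}.
\]
The first inclusion is formal. The second inclusion comes from a moving lemma: any global 0-cycle on $X$ can be moved, by a rationally equivalent cycle, into $U$ while staying close at finitely many places. This is valid since $X$ is smooth. The moved cycle lies in $Z_0^\delta(U)$, and global 0-cycles on $U$ lie in the refined obstruction set. For the latter, write a closed point $P\in U$ with residue field $L$. Then $P$ lifts, via the $U$-torsor $f$, to an $L$-point of some twist $Y^\sigma_L$. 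Its localisation lies in $Y^\sigma_L(L_\Omega)^{\Brnr}$, and it recombines under $\refrec_{U,\Omega}$ to the image of $P$. So the real content is the last inclusion.

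For the last inclusion, take an element of $\RefinedObsDeg{U}{\delta}^{f,\Brnr}$. By definition it is $\refrec_{U,\Omega}$ of a finite formal sum $\sum_i n_i\,x_i$ with $x_i \in f^{\sigma_i}(Y^{\sigma_i}_{L_i}(L_{i,\Omega})^{\Brnr})$ for finite extensions $L_i/k$ and $\sigma_i\in\H^1(L_i,G)$. Since $Y^{\sigma_i}_{L_i}$ is a torsor under the abelian variety $A^{\sigma_i}_{L_i}$ and $\Sha(A^{\sigma_i}_{L_i})$ is finite, we may invoke the theorem of Wang (together with Manin and Cassels--Tate duality). This gives that $\ol{Y(L_i)}$ equals the Brauer--Manin set modulo the connected components at archimedean places. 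So each adelic point $y_i$ in the Brauer--Manin set is approximated, at any finite set $S$ of places, by rational points $y_i'\in Y^{\sigma_i}_{L_i}(L_i)$. The archimedean components need extra care: there we only control up to connected components. This is harmless, since the weak approximation topology on 0-cycles at real places is taken modulo $N_{\mathbb C/\mathbb R}$, i.e.\ via connected components.

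We next need the images $f(y_i')$ to be closed points of $U$ whose recombination equals the local data. Pushing forward, $f(y_i')$ is a $L_i$-point of $U$, hence a closed point of degree dividing $[L_i:k]$. Its corestriction gives a global 0-cycle whose localisation at $v$ is $\sum_{w\mid v} (\text{point at } w)$. By continuity of $f$ and of $\refrec$ (which is a sum over $w\mid v$ of pushforwards), this localisation is close to $\refrec(x_i)$ at all $v\in S$. Taking $\sum_i n_i f_*(y_i')$ then yields a global 0-cycle of degree $\delta$ on $U$ approximating the given one.

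The main obstacle is a pair of issues: making sure the rational points $y'_i$ avoid the locus where $f$ fails to be étale, and obtaining actual points on $U$ rather than on $X$. Wang's theorem gives density, and the complement of $f^{-1}(U)$ in $Y$ is a proper closed subset. So by density in the adelic topology at a place outside $S$ we can choose $y_i'$ in $f^{-1}(U)$; the set $Y(L_i)$ is Zariski dense once it is nonempty and the rank is positive. Positive rank is guaranteed, if necessary, by enlarging $L_i$ via Theorem~\ref{thm:intro-main-rank-jumps}, with a degree chosen so as not to disturb the degree bookkeeping. We must also justify that $\refrec$ is compatible with enlarging $L_i$ in this way. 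We expect this Zariski density and degree-control step to require the most care.
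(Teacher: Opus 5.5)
Your argument is essentially the paper's proof. Your first two inclusions are Lemma~\ref{lem:globalinobsX}: the moving lemma, plus $Z_0^\delta(U)\subset \RefinedObsDeg{U}{\delta}^{f,\Brnr}$ via the partition of $U(L)$ into the sets $f^\sigma_L(V^\sigma_L(L))$. Your third inclusion is Theorem~\ref{thm:WAforU}. There one approximates each local point of $V^\sigma_L(L_\Omega)^{\Brnr}\subset Y^\sigma_L(L_\Omega)^{\Br}$ by rational points, using finiteness of $\Sha(A^\sigma_L)$ (Harari, or equivalently Manin plus Wang). One then pushes forward by $f^\sigma_L$, corestricts, and concludes with Proposition~\ref{prop:BB-Wittenberg-rephrasing-WA}. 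You note that at archimedean places one only approximates up to connected components; the paper passes over this. It is indeed harmless, and the cleanest justification is that $\sym^d(X_{k_v})\to h_0(X_{k_v})/n$ is locally constant.

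The ``main obstacle'' in your last paragraph, however, is not an obstacle, and the remedy you propose there should be removed rather than elaborated. By definition of $U(L_\Omega)^{f,\Brnr}$, where the torsor is $f\colon V\to U$, every component $y_{i,w}$ lies in $V^{\sigma_i}_{L_i}(L_{i,w})$. This set is $w$-adically open in $Y^{\sigma_i}_{L_i}(L_{i,w})$. Hence any $y_i'\in Y^{\sigma_i}_{L_i}(L_i)$ close enough to $y_{i,w_0}$ at a single non-archimedean place $w_0$ automatically lies in $V^{\sigma_i}_{L_i}(L_i)$. This is exactly Lemma~\ref{lem:dense}, and it is already what your phrase about density at an extra place gives.

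The Zariski-density claim is false as stated. For $A=E_1\times E_2$ with $\rk E_1(L)>0=\rk E_2(L)$, the group $A(L)$ has positive rank but is not Zariski dense.

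Enlarging $L_i$ via Theorem~\ref{thm:intro-main-rank-jumps} would actually break the argument. The target is $N^\Omega_{L_i/k}(f^{\sigma_i}_{L_i}(y_i))$, which has degree $[L_i:k]$ and is built from $L_{i,w}$-points. Passing to a proper extension of $L_i$ changes both the degree and the local data being approximated, and no compatibility of $\refrec$ can repair that. Rank jumps are needed in the paper only for Theorem~\ref{thm:hasseprinciplebody}. There, for $\locconstBr$, one has Manin's existence result but no density, so a rational point on $Y^\sigma_L$ cannot be steered into $V^\sigma_L$.
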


Note that Theorem~\ref{thm:main intro-weak-approx} also implies that any failure of the Hasse principle for 0-cycles of degree $\delta$ on $X$ (or $U$) is explained by emptiness of \(\RefinedObsDeg{U}{\delta}^{f, \Brnr}\). However, since locally constant Brauer classes are unramified (see~\cite[(6.1.4)]{Sansuc81}), we have \(\RefinedObsDeg{U}{\delta}^{f, \Brnr}\subset \RefinedObsDeg{U}{\delta}^{f, \locconstBr}\). Thus, the result of Theorem~\ref{thm:main intro-hasse-principle} is \emph{a priori} stronger than what is implied for the Hasse principle by Theorem~\ref{thm:main intro-weak-approx}. Furthermore, another potential advantage of Theorem~\ref{thm:main intro-hasse-principle} is that, under its assumption on finiteness of Tate--Shafarevich groups, the quotients of the locally constant Brauer groups relevant to the obstruction (namely the groups $\locconstBr(Y_L^\sigma)/\Br_0(Y_L^\sigma)$) are finite (see Remark~\ref{remark:locconstBr finite}). On the other hand, the authors do not know whether
all failures of weak approximation for 0-cycles of degree $\delta$ on a generalised Kummer variety $X$ are explained by a strict containment $\ol{\RefinedObsDeg{U}{\delta}^{f,\locconstBr}}\subsetneq Z_{0,\Omega}^\delta(X)$.

Our next result applies to quotients of torsors under abelian varieties by the free action of a finite group or a reductive group.

\begin{theorem}[Theorem~\ref{thm:quotabvars}]
\label{thm:intro bielliptic friend}
Let \(X\) be a smooth, proper, geometrically integral \(k\)-variety. Let \(G\) be a linear algebraic group and \(f \colon Y \to X\) an $X$-torsor under $G$, with \(Y\) a \(k\)-torsor under an abelian variety \(A\). Assume that for every finite extension \(L/k\) and every \(\sigma \in \H^1(L,G)\), 
\(\Sha(A^{\sigma}_L)\) is finite. Let $\delta \in \Z$.
Then: 
\begin{enumerate}[(i)]
    \item 
    \(Z_0^{\delta}(X) =\emptyset \iff \RefinedObsDeg{X}{\delta}^{f,\Br}=\emptyset\iff \RefinedObsDeg{X}{\delta}^{f,\locconstBr} = \emptyset\); and
    \item \(Z_0^{\delta}(X)\) is dense in \(\RefinedObsDeg{X}{\delta}^{f,\Br}\) with respect to the weak approximation topology.
\end{enumerate}
\end{theorem}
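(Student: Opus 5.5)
The strategy is to reduce the statement for $X$ to the corresponding statements for rational points on the twists $Y^\sigma_L$ over finite extensions $L/k$, and then recombine. By the definition of the refined descent set, an element of $\RefinedObsDeg{X}{\delta}^{f,\Br}$ is of the form $\refrec_{X,\Omega}$ applied to a formal sum of adelic points lying in $\bigcup_{\sigma\in \H^1(L,G)} f^\sigma\bigl(Y^\sigma_L(L_\Omega)^{\Br}\bigr)$ for various finite $L/k$.

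\textbf{Step 1: the key input for rational points.} For $Y^\sigma_L$, a torsor over $L$ under the abelian variety $A^\sigma_L$ with $\Sha(A^\sigma_L)$ finite, Manin's theorem \cite{Manin71} gives
\[Y^\sigma_L(L)=\emptyset\iff Y^\sigma_L(L_\Omega)^{\locconstBr}=\emptyset .\]
In the proper case, the Cassels--Tate dual sequence gives the stronger statement that $Y^\sigma_L(L_\Omega)^{\Br}$ equals the closure of $Y^\sigma_L(L)$ once we pass to connected components at archimedean places. This is the refined weak-approximation statement used earlier in the paper for torsors under abelian varieties with finite $\Sha$. Since $\Br\subset\locconstBr$ as obstruction sets, we have $Y^\sigma_L(L_\Omega)^{\Br}\subset Y^\sigma_L(L_\Omega)^{\locconstBr}$. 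Combining these gives the chain
\[Z_0^\delta(X)\subset \RefinedObsDeg{X}{\delta}^{f,\Br}\subset\RefinedObsDeg{X}{\delta}^{f,\locconstBr}.\]
The first inclusion uses functoriality: a closed point $x$ of $X$ with residue field $L$ lifts to some twist $Y^\sigma_L$, and the image of its diagonal adelic point under the recombining map is the cycle $x$.

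\textbf{Step 2: proving (i).} It remains to show that if $\RefinedObsDeg{X}{\delta}^{f,\locconstBr}\neq\emptyset$ then $Z_0^\delta(X)\neq\emptyset$. Write a preimage as $\sum_i n_i [P_i]$ with $P_i\in f^{\sigma_i}(Y^{\sigma_i}_{L_i}(L_{i,\Omega})^{\locconstBr})$. By Step 1, each $Y^{\sigma_i}_{L_i}$ has an $L_i$-point $y_i$. Pushing forward gives a global 0-cycle $z=\sum_i n_i \Cor_{L_i/k}(f(y_i))$. Its degree equals that of the recombined local cycle, since degree is preserved by $\refrec$ by the construction in Definition~\ref{defn:refinedlocalrecombiningmap} (the degree is computed as $\sum n_i[L_i:k]$). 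So $z\in Z_0^\delta(X)$.

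\textbf{Step 3: proving (ii).} Fix an element $\refrec(\sum n_i[P_i])$ with $P_i\in f^{\sigma_i}(Y^{\sigma_i}_{L_i}(L_{i,\Omega})^{\Br})$, together with a finite set $S$ of places and a neighbourhood in the weak approximation topology. By Step 1, each $P_i$ is approximated at places of $L_i$ above $S$ by $L_i$-points $y_i$, up to archimedean connected components. Since $X$ is proper, $f^{\sigma_i}$ is continuous, and $\refrec_{X,\Omega}$ is continuous on each summand, because it is built from pushforwards $\Cor_{L_w/k_v}$ of local points. It follows that $\sum n_i \Cor(f(y_i))$ lies in the chosen neighbourhood.

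\textbf{Main obstacle.} The step I expect to be hardest is the archimedean issue together with the exact compatibility of the weak approximation topology on $Z_{0,\Omega}$ with pushforward. The Brauer--Manin set only controls connected components at real places, so I would check that the weak approximation topology of Definition~\ref{def:weakapproxtop} is coarse enough at archimedean places, for instance by being defined modulo $N_{\mathbb{C}/\mathbb{R}}$ or via $\CH_0$. The other point to verify is that $\Cor_{L_i/k}$ of the diagonal image of $y_i$ is exactly $\refrec$ of the diagonal point, so that local closeness in $L_{i,\Omega}$ transfers to closeness in $Z_{0,\Omega}(X)$.
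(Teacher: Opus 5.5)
Your proposal is correct and is essentially the paper's proof. Part (i) is the argument from the proof of Theorem~\ref{thm:hasseprinciplebody}: Manin's theorem gives $Y^{\sigma_i}_{L_i}(L_i)\neq\emptyset$, and since $f$ is a torsor over all of $X$ no rank-jump step is needed; reusing the coefficients $n_i$ instead of the gcd argument is a harmless simplification. Part (ii) is Proposition~\ref{prop:propofrefobs} applied to $\mathscr{X}^{(L)}=X(L_\Omega)^{f,\Br}$, with density supplied by Harari's theorem (or, as you do, by Manin plus the Cassels--Tate dual sequence) and continuity of $f^\sigma_L$.

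Your archimedean concern is legitimate: Theorem~\ref{thm:HP/WA-torsors}\ref{part:HP/WA-torsors-1} as stated in the paper suppresses the connected-components caveat. It is nevertheless harmless. Wittenberg's local constancy of $\sym^d(X_{k_v})\to h_0(X_{k_v})/n$, composed with the continuous norm map $X(L_w)\to\sym^d(X_{k_v})(k_v)$, shows that the image of $N_{L_w/k_v}(x_w)$ in $h_0(X_{k_v})/n$ depends only on the connected component of $x_w$ at archimedean places.
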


In particular, Theorem~\ref{thm:intro bielliptic friend} applies to bielliptic surfaces, which were the source of the first counterexample to sufficiency of the Brauer--Manin obstruction for rational points 
(see e.g.\ \cite{Sko-bielliptic-99}, \cite{Creutz-Bielliptic17}).

Next, we apply our new tools to the study of equivalences between different types of obstruction sets for 0-cycles, similar to those for rational points. For rational points, it is a well-known result of Harari \cite{Harari02} that $X(\A_k)^{\Br} = X(\A_k)^{\conn}$, where 
\begin{equation}\label{eq:conn desc pts}
X(\A_k)^{\conn} \coloneq \bigcap_{G \textrm{ connected}} \bigcap_{[f: Y \to X] \in \H^1(X, G)} \bigcup_{\sigma \in \H^1(k, G)} f^{\sigma}(Y^\sigma(\A_k))
\end{equation}
is the descent set associated to all $X$-torsors under connected linear algebraic $k$-groups.

In \cite[Remark 5.8]{Zhang-0cycles-25}, Zhang asked whether one can prove a comparison theorem for $0$-cycles similar to Harari's result using the definitions in \cite{balestrieri-berg}. Specifically, if we use the definition of descent sets for 0-cycles in \cite{balestrieri-berg} and let 
\[ Z_{0, \Omega}(X)^{\conn} \coloneq \bigcap_{G \textrm{ connected}} \bigcap_{[f: Y \to X] \in  \H^1(X, G)} Z_{0, \Omega}(X)^f,\]
then Zhang asked whether the equality
\[ \overline{Z_{0, \Omega}(X)^{\conn} } = Z_{0, \Omega}(X)^{\Br}\]
holds under suitable assumptions. Using our refined obstructions, we exploit the known equalities  $X(\A_L)^{\Br} = X(\A_L)^{\conn}$ for all finite extensions $L/k$, together with \cite[Theorem 5.6]{Zhang-0cycles-25}, to prove the chain of inclusions
\[ \RefinedObs{X}^{\Br} = \RefinedObs{X}^{\conn} \subset {Z}_{0, \Omega}(X)^{\conn}  \subset {Z}_{0, \Omega}(X)^{\Br}.\]
Under some reasonable assumptions, following a strategy
used in the proofs of \cite[Theorem 3.2.1]{Liang-Arithmetic-13} and \cite[Theorem 5.6]{Zhang-0cycles-25}, we show that the closure of the refined Brauer--Manin set with respect to the weak approximation topology coincides with the usual Brauer--Manin set for 0-cycles, i.e. $\overline{\RefinedObs{X}^{\Br}} = Z_{0,\Omega}(X)^{\Br}$ (Proposition \ref{prop:wudhiq}). This provides the missing piece needed to answer Zhang's question positively and, putting everything together, we prove the following result.

\begin{restatable}{theorem}{conndense}
\label{thm:conndense}
Let $X$ be a smooth, proper, geometrically integral variety over a number field $k$. Suppose that
\begin{enumerate}[(i)]
\item \label{part:conndense-cond1} $Z_{0, \Omega}(X)^{\Br}$ is open in $Z_{0, \Omega}(X)$ with respect to the weak approximation topology, and 
\item \label{part:conndense-cond2} for every positive integer $d$, there exists a finite extension $k'/k$ such that for any degree $d$ extension $K/k$ which is linearly disjoint from $k'/k$, the restriction map
\[
\Br(X)/\Br_0(X)\ \to\ \Br(X_K)/\Br_0(X_K)
\]
is surjective.
\end{enumerate}
Then  $\overline{Z_{0, \Omega}(X)^{\conn}} = Z_{0, \Omega}(X)^{\Br}$.
\end{restatable}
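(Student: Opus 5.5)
The plan is to sandwich $Z_{0,\Omega}(X)^{\conn}$ between two sets whose closures in the weak approximation topology both equal $Z_{0,\Omega}(X)^{\Br}$. The introduction announces the chain
\[ \RefinedObs{X}^{\Br} = \RefinedObs{X}^{\conn} \subset Z_{0,\Omega}(X)^{\conn} \subset Z_{0,\Omega}(X)^{\Br}, \]
and Proposition~\ref{prop:wudhiq} gives $\overline{\RefinedObs{X}^{\Br}} = Z_{0,\Omega}(X)^{\Br}$ under hypotheses (i) and (ii). Granting these, take closures in the chain. The left end has closure $Z_{0,\Omega}(X)^{\Br}$, so $Z_{0,\Omega}(X)^{\Br} \subset \overline{Z_{0,\Omega}(X)^{\conn}}$. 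For the reverse inclusion, $Z_{0,\Omega}(X)^{\Br}$ is closed, being cut out by continuous local evaluation pairings with a finite-type target in each degree. Hence $\overline{Z_{0,\Omega}(X)^{\conn}} \subset Z_{0,\Omega}(X)^{\Br}$, and the theorem follows. So I would establish the three inclusions and then invoke Proposition~\ref{prop:wudhiq}.

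\textbf{Step 1: the equality.} This is done field by field. For each finite $L/k$, Harari's theorem gives $X(\A_L)^{\Br} = X(\A_L)^{\conn}$. Since $X$ is proper, $X(L_\Omega)$ is the set of adelic points with archimedean components taken modulo connected components, which is the convention under which the Brauer sets are defined. Then $\Z[X(L_\Omega)^{\Br}] = \Z[X(L_\Omega)^{\conn}]$, and applying $\refrec_{X,\Omega}$ and summing over $L$ gives the equality.

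\textbf{Step 2: the middle inclusion.} I must show that $\refrec_{X,\Omega}$ sends $X(L_\Omega)^{f_L}$ into $Z_{0,\Omega}(X)^f$ for every torsor $f\colon Y\to X$ under a connected $G$. Take a point $(x_w)$ that lifts to a twist $Y_L^{\tau}$ with $\tau \in \H^1(L,G)$. Its recombination is a 0-cycle whose components are pushforwards of closed points. The descent set of \cite{balestrieri-berg} is defined through corestriction or pushforward of such liftings. So I would check that the pushforward $\mathrm{Res}_{L/k}$ compatibility of torsor twists puts the cycle in the image of the twisted torsors as required by that definition. This is likely formal but needs care with the exact form of the Balestrieri--Berg definition, which involves twisting over residue fields.

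\textbf{Step 3: the last inclusion.} This is \cite[Theorem 5.6]{Zhang-0cycles-25}, or the inclusion implicit in it, applied to descent sets of connected groups. Zhang shows that the connected descent set for 0-cycles lies in the Brauer--Manin set, via the functoriality of the Brauer pairing under pushforward from closed points.

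The main obstacle is Proposition~\ref{prop:wudhiq}, which I am allowed to assume. Of the remaining steps, the hardest is Step 2, namely reconciling the refined recombination with the Balestrieri--Berg descent definition. I would attack it by reducing to effective cycles and single closed points and comparing the definitions directly.
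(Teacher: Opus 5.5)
Your proposal is correct and follows the paper's proof: the chain $\RefinedObs{X}^{\Br} = \RefinedObs{X}^{\conn} \subset Z_{0,\Omega}(X)^{\conn} \subset Z_{0,\Omega}(X)^{\Br}$ comes from Harari over each finite $L/k$, Lemma~\ref{lem:1}, and Zhang's inclusion (which the paper cites as \cite[Remark 5.8]{Zhang-0cycles-25}), combined with Proposition~\ref{prop:wudhiq} and the closedness of $Z_{0,\Omega}(X)^{\Br}$ (\cite[Proposition 5.2]{Zhang-0cycles-25}). Two small corrections: for proper $X$ one simply has $X(L_\Omega)=X(\A_L)$, with no passage to connected components; and your Step 2 is immediate from the definition $Z_{0,\Omega}(X)^f=\rec_f^\Omega(\cdots)$ over arbitrary finite $L$, with no twisting over residue fields needed, because $G_L$ is still connected and $N^\Omega_{L/k}(f^\sigma_{L,*}(y))=\rec^\Omega_f(y)$ for $y\in Y^\sigma_L(\A_L)$.
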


\begin{remark}
The assumptions in the statement of the above theorem are satisfied, for example, when $X$ is a rationally connected variety or a K3 surface (see Proposition~\ref{prop:notvacuous}).
\end{remark}

Our final result is an analogue for 0-cycles of a result of \cite{CorwinSchlank} 
asserting that a generalised descent obstruction explains all failures of local-global principles for rational points, conditional on the validity of the Section Conjecture. In~\cite{CorwinSchlank}, Corwin and Schlank consider the notion of \emph{very strong approximation} (VSA) with respect to an \emph{obstruction datum} \((\operatorname{obs},S,k)\), where $S \subset \Omega_k$ is a non-empty subset of places: a variety \(V\) satisfies VSA with respect to this datum if \(V(\A_{k,S})^{\operatorname{obs}} = V(k)\), where \(V(\A_{k,S})^{\operatorname{obs}}\) denotes the projection of $V(\A_{k})^{\operatorname{obs}} \subset V(\A_{k})$ to 
$V(\A_{k,S})$, see Notation~\ref{not:11}. Assuming the validity of the Section Conjecture (\cite[Corollary 6.4]{CorwinSchlank}) and taking $S$ to be a non-empty set of finite places of $k$, they show that any $k$-variety admits some finite open cover such that each open set in this cover satisfies VSA with respect to the finite descent obstruction, see~\cite[Corollary 6.6]{CorwinSchlank}. Definition~\ref{defn:introdefrefined} can be generalised significantly by replacing \(X(L_{\Omega})^{\operatorname{obs}}\) with any set \(\mathscr{X}^{(L)}\subset X(L_{\Omega})\) containing \(X_L(L)\)
(Definition \ref{defn:refined-obstruction}), and generalised even further by using similar sets arising from a decomposition of \(X_L\) into subschemes (Definition~\ref{defn:generalised-refined-obstruction}). We prove an effective version of \cite[Corollary 6.6]{CorwinSchlank}. To state this result, recall that for any smooth, quasi-projective, geometrically integral variety $X$ over $k$, the \emph{finite descent set for rational points} is defined as 
\begin{equation}\label{eq:etobstruction}
X(\A_k)^{\et} := \bigcap_{\substack{F \textrm{ finite lin.\ alg.}\\ k\textrm{-group}}} \bigcap_{[f:Y \to X] \in \H^1(X, F)} \bigcup_{\sigma \in \H^1(k,F)} f^{\sigma}(Y^\sigma(\A_k)).
\end{equation}

\begin{theorem}[Proposition~\ref{prop:onecoverforall} and Corollary~\ref{cor:corwin-schlank-effective-refinement}]
\label{cor:corwin-schlank-intro}
Let $X$ be a smooth, quasi-projective, geometrically integral variety over a number field $k$. Fix a non-empty (not necessarily finite) set of finite places of $k$. Assume that the Section Conjecture as in \cite[Conjecture 6.4]{CorwinSchlank} holds for any finite extension $L/k$. Let $X = \bigcup_{i=1}^s U_i$ be a finite affine open cover. Then there exists an effectively computable open cover $X = \bigcup_{i=1}^s \bigcup_{j=1}^{r_i} U_{i,j}$, refining $\{ U_i \}_{i=1}^s$, such that for all finite extensions $L/k$ and all $U_{i,j}$, the base change $(U_{i, j})_L$ satisfies very strong approximation with respect to the obstruction datum $(\et, S_L, L)$. In particular, if we consider the refined obstruction set associated to the collection $\left\{\left\{U_{i,j}(\A_{L, S_L})^{\et}\right\}_{\substack{i=1, \ldots, s\\ j = 1, \ldots, r_i}}\right\}_{L/k}$, that is, 
\[ 
\widetilde{Z}_{0, S}(X)^{\textrm{CS-}(\et, S, k)} \coloneq\refrec_{X, S}\left(\bigoplus_{L/k \ \textrm{finite}} \bigoplus_{i = 1}^{s}\bigoplus_{j=1}^{r_i} \Z[U_{i,j}(\A_{L, S_L})^{\et}]\right) \subset Z_{0, S}(X),
\]
then $Z_0(X) = \widetilde{Z}_{0, S}(X)^{\textrm{CS-}(\et, S, k)}$.  Moreover, for any degree $\delta \in \Z$, we have $Z^{\delta}_0(X) = \widetilde{Z}^{\delta}_{0, S}(X)^{\textrm{CS-}(\et, S, k)}$. 
\end{theorem}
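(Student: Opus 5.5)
The plan has two parts. First, construct the refined cover uniformly in $L$ (Proposition~\ref{prop:onecoverforall}). Second, deduce the equality of 0-cycle sets formally from the definition of $\refrec_{X,S}$.

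\textbf{Constructing the cover.} Following \cite{CorwinSchlank}, I would take each affine $U_i$ and cover it effectively by opens $U_{i,j}$ that are \emph{hyperbolic in a strong sense}. Concretely, I would use Artin neighbourhoods (elementary fibrations) to realise each $U_{i,j}$ as an iterated fibration in hyperbolic affine curves over a point, i.e.\ a tower of smooth hyperbolic curve fibrations. These fibrations are defined over $k$ and stay of this type after any base change $L/k$, since hyperbolicity of geometric fibres is a geometric condition. Hence one cover works for every finite $L$.

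Next, I would invoke the Section Conjecture over each finite $L/k$, in the form \cite[Conjecture 6.4]{CorwinSchlank}. Combined with the Corwin--Schlank argument that finite descent detects the sections of $\pi_1$ for such towers, this gives
\[(U_{i,j})_L(\A_{L,S_L})^{\et}=U_{i,j}(L).\]
The argument is by induction on the length of the tower: finite descent on the total space induces finite descent on base and fibres, and the fibre statement for hyperbolic curves is exactly their \cite[Corollary 6.6]{CorwinSchlank} mechanism. I expect the main obstacle to be checking that their proof only uses properties stable under base change, so that a single effective cover suffices for all $L$. I would also need to verify that $S_L$, the places of $L$ above $S$, is still a non-empty set of finite places, which is immediate.

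\textbf{Deducing the 0-cycle statement.} The inclusion $Z_0(X)\subset \widetilde{Z}_{0,S}(X)^{\textrm{CS-}(\et,S,k)}$ holds because every closed point $P$ has residue field $L=k(P)$, and $P$ lies in some $U_{i,j}$. Its image in $U_{i,j}(\A_{L,S_L})^{\et}$ is mapped by $\refrec_{X,S}$ to the localisation of $P$; extend by linearity. For the reverse inclusion, a generator of the source is an element of $U_{i,j}(\A_{L,S_L})^{\et}$, which by the first step is a point $x\in U_{i,j}(L)$. By the definition of the refined recombining map (Definition~\ref{defn:refinedlocalrecombiningmap}), its image is the localisation of the global 0-cycle $(\mathrm{pushforward\ of}\ x)\in Z_0(X)$. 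Linearity then gives equality. Localisation $Z_0(X)\to Z_{0,S}(X)$ should be injective as soon as $S$ is non-empty, or else the equality is meant via this map; I would state this identification explicitly.

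\textbf{Degree statement.} Since degree is preserved by $\refrec_{X,S}$ and by pushforward, intersecting both sides with degree $\delta$ gives $Z^\delta_0(X)=\widetilde{Z}^{\delta}_{0,S}(X)^{\textrm{CS-}(\et,S,k)}$.
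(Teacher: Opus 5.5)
Your deduction of the 0-cycle statements from the VSA property is fine, and it is exactly the paper's argument (Theorem~\ref{thm:cs-thm1}). Once $U_{i,j}(\A_{L,S_L})^{\et}=U_{i,j}(L)$, the generators are just $N_{L/k}$ of global $L$-points; your remark that $Z_0(X)\to Z_0(X_{k_v})$ is injective is also correct.

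The gap is in the construction of the cover. Your inductive step, ``finite descent on the total space induces finite descent on base and fibres'', fails as stated. Take an elementary fibration $g\colon U\to B$ and $x\in U(\A_L)^{\et}$. Functoriality gives $g(x)\in B(\A_L)^{\et}$, so by induction $\operatorname{pr}_{S_L}(g(x))=b$ for some $b\in B(L)$. But $x$ produces no element of $U_b(\A_L)^{\et}$ to which the curve case could be applied:
\begin{enumerate}[(i)]
\item VSA for $B$ only controls the components at $S_L$, so $x_w$ need not lie on $U_b$ for $w\notin S_L$.
\item Even if it did, $U_b(\A_L)^{\et}$ is cut out by all torsors over $U_b$, not only those restricted from $U$. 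So membership in $U(\A_L)^{\et}$ does not give membership in $U_b(\A_L)^{\et}$.
\end{enumerate}
To rescue the fibration route you would have to work with Galois sections throughout. That would require a Harari--Stix type description of $U(\A_L)^{\et}$ via global sections of $\pi_1(U)\to\Gal(\ol{L}/L)$, the homotopy exact sequence of the fibration, and the section conjecture for strongly hyperbolic Artin neighbourhoods (Schmidt--Stix). You would also need to exclude cuspidal sections and prove injectivity of $U(L_w)\to\{\text{sections}\}$ at $w\in S_L$. None of this is supplied, and it is not what \cite[Corollary 6.6]{CorwinSchlank} does. Likewise, you assert rather than show that a cover by hyperbolic Artin neighbourhoods refining $\{U_i\}$ is effectively computable and works uniformly in $L$.

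The missing idea is that no fibration argument is needed. By \cite[Lemma 2.5]{CorwinSchlank}, VSA with respect to $(\et,S_L,L)$ passes to products and to locally closed subvarieties. By \cite[Proposition 6.5]{CorwinSchlank} (the Section Conjecture over $L$), it holds for $\P^1_L\setminus\{\infty,0,1\}$ and $\P^1_L\setminus\{2,3,4\}$, which together cover $\P^1_L$. The paper embeds each affine $U_i\hookrightarrow\A^n_k\subset(\P^1_k)^n$ and pulls this two-piece cover back along each coordinate projection. This gives $2^n$ opens $U_\gamma$, each locally closed in a product $\gamma_1\times\cdots\times\gamma_n$ of such punctured lines. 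All of this commutes with base change to $L$, so one explicit, obviously computable cover works simultaneously for every finite $L/k$.
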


\subsection{Outline of the paper}
Section \ref{sec:prelims} contains preliminaries on obstruction sets for 0-cycles. In Section \ref{sec:refined} we construct refined obstruction sets for 0-cycles, which are \emph{a priori} finer than those in the existing literature. In Section \ref{subsec:prelim-weak-approximation} we recall the definitions of the Hasse principle and weak approximation for 0-cycles, and prove a bridging result between obstructions for rational points over all finite extensions and obstructions for 0-cycles.
In Section \ref{sec:locconst} we introduce various subgroups of the Brauer group consisting of locally constant Brauer classes, and recall some results on the Hasse principle and weak approximation for torsors under abelian varieties. In Section \ref{section:kummer-descent} we define generalised Kummer varieties and the refined obstruction sets appearing in Theorems~\ref{thm:main intro-hasse-principle} and \ref{thm:main intro-weak-approx}. Theorem~\ref{thm:main intro-weak-approx} is proved in Section~\ref{sec:WAkummer}. Section~\ref{subsec:hasse} concerns the proof of Theorem~\ref{thm:main intro-hasse-principle}, noting that this proof relies on a theorem on rank jumps for abelian varieties (Theorem~\ref{thm:intro-main-rank-jumps}), which we prove in Section \ref{sec:rankjumps}. 
In Section \ref{sec:quotients-general} we give an application of our refined obstructions to bielliptic surfaces, proving Theorem~\ref{thm:intro bielliptic friend} and Corollary~\ref{cor:biell}. In Section \ref{sec:huizhang} we apply our new machinery to answer a question of Zhang on weak approximation for varieties. Finally, in Section \ref{sec:corwin-schlank} we generalise work of Corwin and Schlank to 0-cycles, proving that a certain refined obstruction set is the only obstruction to the Hasse principle for 0-cycles on smooth, quasi-projective, geometrically integral varieties over number fields (Theorem~\ref{cor:corwin-schlank-intro}).

\subsection{Notation and conventions}
\label{subsec:intro-notation}

In this article, \(k\) will always denote a number field, and we write $\ol{k}$ for an algebraic closure of $k$. We write $\Omega_k$ for the set of all places of $k$. We often drop the subscript $k$ from the notation when it is clear from the context.
We use \emph{variety} to mean a separated scheme of finite type over a field.
For a scheme $X$ over $k$, we denote by $X_0$ the set of closed points on $X$. For a closed point \(x \in X_0\), we denote by \(k(x)\) its residue field over $k$.  We write $Z_0(X)$ for the set of 0-cycles on $X$ and $Z_0^\delta(X)$ for the subset of 0-cycles of degree $\delta$. We write $X(k_\Omega) = \prod_{v \in \Omega} X(k_v)$ and we call any point $(x_v)_v \in X(k_\Omega)$ an $\Omega$-local point on $X$. 
Note that the set of $\Omega$-local points $ X(k_\Omega)$ may not be equal to the set of adelic points \(X(\A_k)\) if \(X\) is not proper. If \(L\) is an extension of \(k\), we will denote by $X_L:=X\times_{\spec(k)} \spec(L)$ the base change of \(X\) to \(L\).
Throughout the article, \(G\) will denote an algebraic group over \(k\), and we will indicate when we assume it to be finite.
Unless mentioned otherwise, we will work with fppf topology. Further, for the sake of brevity, for a \(k\)-scheme \(V\) we will write \(\H^1(V,G) \coloneqq \H^1(V,G_V)\) for the fppf cohomology pointed set.

\subsection{Acknowledgements}
\label{subsec:intro-acknowledgements}
We thank Brendan Creutz, Tim Dokchitser, Vladimir Dokchitser, Adam Morgan and James Rawson for useful discussions. Anouk Greven was funded by the Deutsche Forschungsgemeinschaft (DFG) – 398436923 (RTG2491).
Rachel Newton was supported by UKRI Future Leaders Fellowship MR/T041609/1, MR/T041609/2 and UKRI1060. Soumya Sankar was supported by the Dutch Research Council (NWO) grant OCENW.XL21.XL21.011. 
This project began at the \emph{WINE5: Women in Numbers Europe 5} workshop in 2025. We are grateful for the hospitality and support of the Faculty of Science, University of Split during the workshop and we would especially like to thank Marcela Hanzer, Borka Jadrijevi\'{c}, P\i nar K\i l\i\c{c}er and Lejla Smajlovi\'{c} for organizing \emph{WINE5} and making this collaboration possible.
For the purpose of open access, a CC BY public copyright license is applied to any Author Accepted Manuscript version arising from this submission.

\section{Preliminaries}
\label{sec:prelims}

Throughout this section, let $k$ be a number field and let $X$ be a smooth, quasi-projective, geometrically integral variety defined over $k$. We write $X_0$ for the set of closed points on $X$. 
We write $X(\A_k)$ for the set of adelic points on $X$, and we let $X(k_\Omega) \coloneq\Pi_{v \in \Omega} X(k_v)$.
Then $X(\mathbb{A}_k) \subset X(k_\Omega)$, and equality holds when $X$ is proper.

\begin{definition}
A \emph{0-cycle $z$ on $X$} is a formal sum
$$z \coloneq\sum_{\substack{x\in X_0
}} n_x x,$$
with $n_x\in\mathbb{Z}$, such that $n_x = 0$ for all but finitely many $x \in X_0$. The degree of $z$ is given by
$$\deg(z) \coloneq\sum_{\substack{x\in X_0
}} n_x [k(x):k].$$

\end{definition}
We write $Z_0(X)$ for the \emph{set of 0-cycles on $X$} and $Z_0^\delta(X)$ for the \emph{subset of 0-cycles of degree $\delta$}. 

\begin{remark} 
The set $Z_0(X)$ is an abelian group under addition. For 0-cycles $z_1,z_2$ on $X$, we have the equality $\deg(z_1+z_2) = \deg(z_1) + \deg(z_2)$. 
\end{remark}

\begin{definition} 
A 0-cycle $z = \sum_{x\in X_0} n_x x$ on $X$ is called \emph{effective} if $n_x\geq 0$  for all $x\in X_0$. We say that $x \in X_0$ is in the \emph{support of $z$} if $n_x \neq 0$.
\end{definition}

\begin{example}
Fix a point $x\in X(k)$. The degree of the 0-cycle on $X$ given by $x$ is equal to 1. More precisely, effective 0-cycles of degree $1$ on $X$ are $k$-rational points on $X$. 
\end{example}

\begin{definition}
We write 
\begin{equation*}
Z^{\delta}_{0,\Omega}(X) \coloneq\prod_{v \in \Omega} Z^{\delta}_0(X_{k_v}) \quad \textrm{and} \quad Z_{0,\Omega}(X) \coloneq\bigsqcup_{\delta \in \Z} Z^{\delta}_{0,\Omega}(X).
\end{equation*}
We call elements of $Z_{0,\Omega}(X)$ \emph{$\Omega$-local 0-cycles} on $X$.
\end{definition}

\begin{remark}
For any degree $\delta$, we have a natural inclusion
\begin{equation*}
Z_0^\delta(X) \subset Z_{0,\Omega}^\delta(X).
\end{equation*}
This leads us to consider obstruction sets in a similar vein to, for example, Brauer--Manin obstructions to the Hasse principle and weak approximation for rational points.
\end{remark}

\subsection{Brauer--Manin obstruction sets for 0-cycles} 
\label{subsection:obstruction-sets}
The Brauer group of \(X\) is defined as \(\Br (X) \coloneq\H^2_{\et}(X, \G_m)\).
For $X$ a smooth, quasi-projective, geometrically integral variety, an element $\alpha \in \Br(X)$ induces an evaluation map on \(T\)-points \(\alpha \colon X(T) \to \Br(T)\) via pullback along \(T \to X\) for any \(k\)-scheme \(T\). We obtain the following commutative diagram in the setting of rational points: 
\begin{center}
\begin{tikzcd}
\label{fig:Brauer-rational-points}
	& {X(k)} && X(\A_k) \\
	0 & {\Br(k)} && {\bigoplus_{v\in\Omega} \Br(k_v)} && {\mathbb{Q}/\mathbb{Z}} & 0, 
	\arrow[hook, from=1-2, to=1-4]
	\arrow["\alpha", from=1-2, to=2-2]
	\arrow["{\prod_{v\in\Omega} \alpha}", from=1-4, to=2-4]
	\arrow[from=2-1, to=2-2]
	\arrow[from=2-2, to=2-4]
	\arrow["{\sum_{v\in\Omega} \text{inv}_v}"', from=2-4, to=2-6]
	\arrow[from=2-6, to=2-7]
\end{tikzcd}
\end{center}
where the sequence in the bottom is a short exact sequence coming from class field theory (the Albert--Brauer--Hasse--Noether Theorem). In \cite{Manin71}, Manin used this to define a pairing, now called the  \emph{Brauer--Manin pairing} and denoted by $\lr{ \cdot, \cdot}_{BM}$, given by
\[
\lr{ \cdot, \cdot}_{BM}: X(\A_k) \times \Br(X) \to \Q/\Z; \quad ((x_v)_v, \alpha ) \mapsto \sum_{v \in \Omega} \mathrm{inv}_v(\alpha(x_v)).
\]
The pairing is well defined because, for $(x_v)_v\in \A_k$, we have \(\mathrm{inv}_v(\alpha(x_v)) = 0\) for all but finitely many \(v\). 

We define the \emph{unramified Brauer group}, denoted by \(\Br_{\mathrm{nr}}(X)\), to be the Brauer group of any smooth compactification \(X^c\) of \(X\), which exists by Hironaka's resolution of singularities theorem. This is well defined, as the Brauer group is a birational invariant of proper, integral, regular varieties, and thus \(\Brnr{X}\) is independent of the choice of compactification (see e.g.\ \cite[Proposition 6.2.7]{CTS-BrauerBook}). For $\alpha\in\Brnr(X)$ and $(x_v)_v\in X(k_\Omega)$, we have \(\mathrm{inv}_v(\alpha(x_v)) = 0\) for all but finitely many \(v\) and hence we obtain a well-defined Brauer--Manin pairing
\[
\lr{ \cdot, \cdot}_{BM}: X(k_\Omega) \times \Brnr(X) \to \Q/\Z; \quad ((x_v)_v, \alpha ) \mapsto \sum_{v \in \Omega} \mathrm{inv}_v(\alpha(x_v)).
\]
Note that if $X$ is proper then $X(k_\Omega)=X(\A_k)$ and $\Br_{\mathrm{nr}}(X) = \Br (X)$.

For $\alpha \in \Br(X)$, we define the set of \emph{adelic points orthogonal to $\alpha$} to be
\[X(\A_k)^{\alpha}\coloneq\{x\in X(\A_k)\;:\; \lr{x,\alpha}_{BM}=0\}\]
and, for $\alpha\in \Brnr(X)$, we define the set of \emph{\(\Omega\)-local points orthogonal to \(\alpha\)} to be
\[X(k_\Omega)^{\alpha}\coloneq\{x\in X(k_\Omega)\;:\; \lr{x,\alpha}_{BM}=0\}.\]
For $B \subset \Br(X)$, we define the set of \emph{adelic points orthogonal to $B$} to be
\[X(\A_k)^{B }=\bigcap_{\alpha\in B} X(\A_k)^{\alpha}\]
and, for $B\subset \Brnr(X)$, we define the set of \emph{\(\Omega\)-local points orthogonal to \(B\)} to be
\[X(k_\Omega)^{B }=\bigcap_{\alpha\in B} X(k_\Omega)^{\alpha}.\]
When the subgroup $B$ depends functorially on $X$, we often drop $X$ from the notation, writing $X(k_\Omega)^{\Brnr}$ as shorthand for $X(k_\Omega)^{\Brnr(X)}$, for example.

For all $\alpha\in \Brnr(X)$, we have
\begin{equation}\label{eq:rat pts in BM set}
   \ol{X(k)}\subset X(k_{\Omega})^{\alpha}\subset X(k_{\Omega}),  
\end{equation}
where $\ol{X(k)}$ denotes the closure of the rational points in $X(k_{\Omega})$, with respect to the product of the $v$-adic topologies on the $X(k_v)$. 

Now let \(x_v \in (X_{k_v})_0\) be a closed point with residue field $k_v(x_v)$.  
For \(\alpha \in \Brnr(X)\), evaluation at $x_v$ yields
$\alpha(x_v) \in \Br(k_v(x_v))$, and then corestriction $\mathrm{cores}_{k_v(x_v)/k_v} \colon \Br(k_v(x_v)) \to \Br(k_v)$ yields an element in $\Br(k_v)$.
If \(z_v = \sum n_{x_v} x_v\) is a 0-cycle, then we can apply this process to each \(x_v\), 
allowing us to extend the Brauer--Manin pairing to a pairing 
\begin{align*}
\lr{ \cdot, \cdot}_{BM}: Z_{0, \Omega}(X) \times \Brnr(X) & \longrightarrow \Q/\Z\\
\Biggl(\Bigl(\sum_{\substack{x_v \in (X_{k_v})_0}}  n_{x_v} x_v\Bigr)_v\ ,\ \alpha \Biggr) & \longmapsto \sum_{v\in \Omega} \sum_{\substack{x_v \in (X_{k_v})_0
}}  n_{x_v} \text{inv}_v(\mathrm{cores}_{k_v(x_v)/k_v}(\alpha(x_v))).
\end{align*}

\begin{definition}[{\cite{CT-Chow-93}}]
\label{defn:unrefined-Brauer}  
For $\alpha\in \Br_{\mathrm{nr}}(X)$, define the \emph{group of $\Omega$-local 0-cycles on $X$ orthogonal to $\alpha$} to be
$$Z_{0,\Omega}(X)^\alpha \coloneq\left\{ z \in Z_{0,\Omega}(X)\;:\; \lr{z,\alpha}_{BM} = 0 \right\}.$$
For any $B \subset \Br_{\mathrm{nr}}(X)$, we define the set of $\Omega$-local 0-cycles orthogonal to $B$ to be $$ Z_{0,\Omega}(X)^{B} \coloneq\bigcap_{\alpha \in B} Z_{0,\Omega}(X)^\alpha.$$
As above, when the subgroup $B$ depends functorially on $X$, we often drop $X$ from the notation, writing $Z_{0,\Omega}(X)^\Brnr$ as shorthand for $Z_{0,\Omega}(X)^{\Brnr(X)}$, for example. For all $\alpha\in \Brnr(X)$, we have
\[Z_{0}(X)\subset Z_{0,\Omega}(X)^{\alpha}\subset Z_{0,\Omega}(X). \]
\end{definition}

\subsection{Recombining maps and descent sets}\label{sec:descent}
In this section we recall the definitions of recombining maps and (unrefined) obstruction sets for 0-cycles from \cite{balestrieri-berg}.

Let $Y$ be a quasi-projective $k$-variety endowed with an action of an algebraic group $G$ over $k$. Then we denote by $Y^\sigma$ the twist of $Y$ by $\sigma \in \H^1(k,G)$, see \cite[p.12]{Sko-Torsors}.

\begin{definition}\label{defn:f-descent_pt}
Let $G$ be a linear algebraic group over $k$ and let $f: Y\rightarrow X$ be an $X$-torsor under $G$ defined over $k$~\cite[Definition 2.1.1]{Sko-Torsors}. 
For any $\sigma\in \H^1(k,G)$, we have an $X$-torsor under $G^\sigma$ of the form $f^\sigma : Y^\sigma \rightarrow X$.
We can partition the set of $k$-rational points on $X$ as 
\begin{equation}\label{eq:partition}
X(k) = \bigsqcup_{\sigma\in \H^1(k,G)} f^\sigma(Y^\sigma(k)),
\end{equation}
see e.g.\ \cite[\S 2]{Sko-Torsors}.
The \emph{$f$-descent set for rational points} is defined as
\begin{equation*}
 X(\A_k)^f \coloneq \bigcup_{\sigma\in \H^1(k,G)} f^\sigma(Y^\sigma(\A_k)).
\end{equation*}
\end{definition}

\begin{remark}
The inclusions $Y^\sigma(k) \subset Y^\sigma (\A_k)$, together with~\eqref{eq:partition}, yield an inclusion
\begin{equation*}
 X(k) \subset X(\A_k)^f.
 \end{equation*}
\end{remark}

To define analogue of partition~\eqref{eq:partition} for \(0\)-cycles, we first introduce the global and local recombining maps.

\begin{definition}
    Let $G$ be a linear algebraic group over $k$. 
Let $f:Y\rightarrow X$ be an $X$-torsor under $G$ defined over $k$, let $L$ be a finite extension of $k$. 
For any $\sigma\in \H^1(L,G)$, we have an $X_L$-torsor under $G_L^\sigma$ defined over $L$ given by $f_L^\sigma:Y_L^\sigma\rightarrow X_L$. This twisted $X_L$-torsor induces pushforward maps 
\begin{align*}
f_{L,*}^\sigma : Z_0(Y_L^\sigma) \rightarrow Z_0(X_L) \ \textrm{  and  } \ 
f_{L,*}^\sigma : Z_{0,\Omega}(Y_L^\sigma) \rightarrow Z_{0,\Omega}(X_L)
\end{align*}
which preserve degrees of 0-cycles. 
\end{definition}

\begin{definition}
Let $X$ be a variety defined over $k$ and let $L$ be a finite extension of $k$. There exists a natural \emph{corestriction map}
$$N_{L/k}: Z_0(X_L) \rightarrow Z_0(X),$$
which sends a 0-cycle of degree $\delta$ on $X_L$ to a 0-cycle of degree $[L:k]\cdot\delta$ on $X$.

The local corestriction maps $N_{L_w/k_v}: Z_0(X_{L_w}) \rightarrow Z_0(X_{k_v})$ for $w\in\Omega_L$ lying above $v\in\Omega_k$ yield a map \begin{align}\label{eq:adelic cores}
N_{L/k}^\Omega \ :\ & Z_{0,\Omega}(X_L) \rightarrow Z_{0,\Omega}(X)\\
& \nonumber (z_w)_{w\in\Omega_L} \mapsto \bigl(\sum_{w \mid v} N_{L_w/k_v}(z_w)\bigr)_{v\in\Omega_k}
\end{align}
that restricts to $N_{L/k}$ when one views $Z_0(X_L)$ and $Z_0(X)$ as subgroups of $Z_{0,\Omega}(X_L)$ and $Z_{0,\Omega}(X)$, respectively. Explicitly, for $z\in Z_0(X_L)$, 
\begin{equation}\label{eq:locglobnorm}
N_{L/k}^\Omega((z)_{w\in\Omega_L})=(N_{L/k}(z))_{v\in\Omega_k}.
\end{equation}
\end{definition}

\begin{definition}
Let $G$ be a linear algebraic group over $k$ and let $f: Y\rightarrow X$ be an $X$-torsor under $G$ defined over $k$. Define the \emph{global $f$-recombining map} $$ \rec_f : \bigoplus_{L/k \text{ finite}} \bigoplus_{\sigma\in\H^1(L,G)} Z_0(Y_L^\sigma) \rightarrow Z_0(X)$$
in the following way. Let $L_1,\dots, L_n$ be a finite collection of finite extensions of $k$. For each $L_i$ let $\sigma_{i,1},\dots ,\sigma_{i, m_i}$ be a finite collection of elements of $\H^1(L_i,G)$. For a 0-cycle $z\in Z_0(Y_{L_i}^{\sigma_{i,j}})$, the pushforward $f_{L_i,*}^{\sigma_{i,j}}(z)$ lies in $Z_0(X_{L_i})$. We
define $\rec_f(z)\coloneq N_{L_i/k}(f_{L_i ,*}^{\sigma_{i,j}}(z))\in Z_0(X)$ and extend by linearity.
\end{definition}

\begin{remark}
Up to a change of notation, this is the map $g_*$ described in Definition 2.2 of \cite{balestrieri-berg}.
\end{remark}

Using this recombining map, we define the analogue of partition~\eqref{eq:partition} for \(0\)-cycles as follows.

\begin{lemma}\label{lem:partition0cycles}
Let $G$ be a linear algebraic group over $k$ and let $f: Y\rightarrow X$ be an $X$-torsor under $G$ defined over $k$. Then 
\begin{equation*}
Z_0(X) = \rec_f \left( \bigoplus_{\substack{L/k\ 
\mathrm{finite}}}  \bigoplus_{\sigma \in \H^1(L,G)} Z_0(Y_L^\sigma)\right).
\end{equation*}
\end{lemma}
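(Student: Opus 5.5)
The inclusion $\supseteq$ is immediate, since $\rec_f$ lands in $Z_0(X)$ by construction. For the reverse inclusion, $Z_0(X)$ is generated as an abelian group by closed points $x \in X_0$, and $\rec_f$ is a homomorphism on a direct sum. So its image is a subgroup, and it suffices to show that each closed point $x$, viewed as a $0$-cycle of degree $[k(x):k]$, lies in the image.

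Fix $x \in X_0$ and put $L \coloneq k(x)$. The point $x$ gives an $L$-rational point $\tilde{x} \in X_L(L) = X(L)$, namely $\spec L \to X$ with image $x$. Apply the partition \eqref{eq:partition} over the field $L$ to the $X_L$-torsor $f_L \colon Y_L \to X_L$ under $G_L$. This gives some $\sigma \in \H^1(L,G)$ and a point $y \in Y_L^\sigma(L)$ with $f_L^\sigma(y) = \tilde{x}$. Concretely, $\sigma$ is the class of the fibre $f^{-1}(x)$, which is an $L$-torsor under $G_L$, and twisting by $\sigma$ makes that fibre trivial, so it has an $L$-point.

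Regard $y$ as a degree-$1$ $0$-cycle on $Y_L^\sigma$. Then $f^\sigma_{L,*}(y) = \tilde{x}$, as a degree-$1$ $0$-cycle on $X_L$, and we need $N_{L/k}(\tilde{x}) = x$. The closed point $\tilde{x}$ of $X_L = X \times_k L$ maps to $x$ under the projection $X_L \to X$, and its residue field is $L$. The corestriction (proper pushforward) multiplies by the residue degree $[L : k(x)] = 1$, so $N_{L/k}(\tilde{x}) = x$. Hence $\rec_f(y) = x$, and by linearity every $0$-cycle, including those with negative coefficients, lies in the image.

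The only real care needed is in the bookkeeping at this last step. One must check that $\tilde{x}$ is the point of $X_L$ lying over $x$ that has residue field exactly $L$, among the several points of $X_L$ above $x$, and that the pushforward degree factor is therefore $1$ rather than $[L:k]$. One must also check that twisting commutes with base change, so that $(Y^\sigma)$ over $L$ agrees with $Y_L^\sigma$ as used in the definition of $\rec_f$. Both are standard, and nothing deeper is needed.
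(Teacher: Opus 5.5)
Your proof is correct. The paper gives no argument of its own here; it simply cites \cite[Proposition~2.5]{balestrieri-berg}. Your direct argument is the natural one: reduce to a single closed point $x$, lift the $k(x)$-point $\tilde{x}$ of $X_{k(x)}$ through the partition \eqref{eq:partition} over $L=k(x)$, and note that $N_{L/k}(\tilde{x})=x$ because $\tilde{x}$ has residue field exactly $L=k(x)$. Your closing worry about twisting commuting with base change is moot. Since $\sigma\in\H^1(L,G)$, the variety $Y_L^\sigma$ is by definition the twist of $Y_L$, and that is exactly what the partition over $L$ produces.
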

\begin{proof}
See~\cite[Proposition~2.5]{balestrieri-berg}.
\end{proof}

To define descent sets for 0-cycles, we also require the notion of an $\Omega$-local recombining map. 

\begin{definition}
Let $G$ be a linear algebraic group over $k$ and let $f: Y\rightarrow X$ be an $X$-torsor under $G$ defined over $k$. 
Define the \emph{$\Omega$-local $f$-recombining map}
$$ \rec_f^\Omega : \bigoplus_{L/k \text{ finite}} \bigoplus_{\sigma\in\H^1(L,G)} Z_{0,\Omega}(Y_L^\sigma) \rightarrow Z_{0,\Omega}(X)$$
in the following way. Let $L_1,\dots, L_n$ be a finite collection of finite extensions of $k$. For each $L_i$ let $\sigma_{i,1},\dots ,\sigma_{i, m_i}$ be a finite collection of elements of $\H^1(L_i,G)$. For an $\Omega$-local 0-cycle $(z_w)_{w\in\Omega_{L_i}}\in Z_{0,\Omega}(Y_{L_i}^{\sigma_{i,j}})$, define $\rec_f^\Omega((z_w)_{w\in\Omega_{L_i}})\coloneq N_{L_i/k}^\Omega(f_{L_i ,*}^{\sigma_{i,j}}((z_w)_w))\in Z_{0,\Omega}(X)$ and extend by linearity. 
\end{definition}

\begin{remark}
This map is, again up to a change of notation, as described in Definition 3.1 of \cite{balestrieri-berg}.
\end{remark}

\begin{definition}[{\cite[Definition 3.3]{balestrieri-berg}}]
\label{defn:non-refined-f-descent}
Let $G$ be a linear algebraic group over $k$ and let $f:Y\rightarrow X$ be an $X$-torsor under $G$ defined over $k$. The \emph{$f$-descent set for 0-cycles} is defined as 
\begin{equation*}
Z_{0,\Omega}(X)^f \coloneq\rec^\Omega_f \left( \bigoplus_{L/k \textrm{ finite}}  \bigoplus_{\substack{\sigma\in \H^1(L,G)}} Z_{0,\Omega}(Y_L^\sigma)\right).
\end{equation*}
\end{definition}

Note that for $z \in Z_0(Y_L^\sigma)$, we have $\rec_f^\Omega((z)_w) = (\rec_f(z))_v$. Thus, Lemma~\ref{lem:partition0cycles} shows that $Z_0(X)\subset Z_{0,\Omega}(X)^f$.

\section{Refined obstruction sets for 0-cycles}
\label{sec:refined}

Let $k$ be a number field and let $X$ be a smooth, quasi-projective, geometrically integral variety defined over $k$. In this section, we define our new refined obstruction sets for 0-cycles. Consider the free abelian group generated by a set $S$, defined as follows: \[\Z[S] \coloneq\left\{\sum_{s \in S} n_s \;\bigg{\vert}\; n_s \in \Z, n_s = 0\text{ for all but finitely many $s \in S$}\right\}.\]

\begin{definition}
We define the \emph{refined global recombining map for $X$} to be the group homomorphism $$\refrec_X \coloneq \sum_{L/k \text{ finite}} N_{L/k}: \bigoplus_{L/k \text{ finite}} \Z[X(L)] \rightarrow Z_0(X),$$ wherein we view $\Z[X(L)]$ as a subgroup of $Z_0(X_L)$ by viewing points in $X(L)$ as 0-cycles of degree $1$ on $X_L$.
\end{definition}

\begin{remark}\label{rmk:rec surj}
Observe that $Z_0(X) = \im(\refrec_X)$; this justifies the idea of using this refined recombining map to partition the set of 0-cycles on $X$.  
\end{remark}

Now let $L/k$ be a finite extension. The natural map $X(L_\Omega)\to Z_{0,\Omega}(X_L)$ induced by viewing a local point $x_w\in X(L_w)$ as a 0-cycle of degree $1$ in $Z_0(X_{L_w})$ extends by $\Z$-linearity to an injective homomorphism $\Z[X(L_\Omega)] \hookrightarrow Z_{0, \Omega}(X_L)$ making the following diagram commute:
\begin{center}
\begin{equation}\label{eq:rec}
\begin{tikzcd}
\Z[X(L_\Omega)]\ar[rr, hook] && Z_{0, \Omega}(X_L)\\
\Z[X(L)]\ar[u, hook]\ar[rr, hook, ] && Z_{0}(X_L)\ar[u, hook].
\end{tikzcd}
\end{equation}
\end{center}

\begin{remark}\label{rmk:notsurj}
Note that for $z$ in the image of $\Z[X(L)] \hookrightarrow Z_0(X_L)$, each closed point in the support of $z$ has degree 1, and therefore this map is not surjective. Similarly, the map $\Z[X(L_\Omega)] \hookrightarrow Z_{0, \Omega}(X_L)$ is not surjective.
\end{remark}

\begin{definition}\label{defn:refinedlocalrecombiningmap}
We define the \emph{refined $\Omega$-local recombining map} for $X$ to be the group homomorphism
\begin{align*}
    \refrec_{X, \Omega} \coloneq \sum_{L/k \ \textrm{finite}} N_{L/k}^\Omega 
    \colon \bigoplus_{L/k \ \textrm{finite}} \Z[X(L_\Omega)] &\rightarrow Z_{0, \Omega}(X),
\end{align*}
wherein we view $\Z[X(L_\Omega)]$ as a subgroup of $Z_{0, \Omega}(X_L)$ by viewing elements of $X(L_\Omega) $ as 
$\Omega$-local 0-cycles of degree $1$ on $X_L$.
\end{definition}

By~\eqref{eq:locglobnorm} and~\eqref{eq:rec}, the inclusions $X(L)\hookrightarrow X(L_\Omega)$ and $Z_0(X)\hookrightarrow Z_{0, \Omega}(X)$ give a commutative diagram
\begin{center}
\begin{equation}\label{eq:recX}
\begin{tikzcd}
\bigoplus_{L/k \ \textrm{finite}} \Z[X(L_\Omega)]\ar[rr, "\refrec_{X,\Omega}"] && Z_{0, \Omega}(X)\\
\bigoplus_{L/k \ \textrm{finite}} \Z[X(L)]\ar[u, hook]\ar[rr, ->>, "\refrec_{X}"] && Z_{0}(X)\ar[u, hook]. 
\end{tikzcd}
\end{equation}
\end{center}

\begin{remark}
Any 0-cycle $z$ can be written uniquely as $z = z^+ - 
z^-$, where \(z^{+}\) and \(z^{-}\) are effective. One benefit of the refined recombining map is that it enables one to use this splitting and exploit the correspondence between effective 0-cycles of degree $d$ and rational points on $\text{Sym}^d(X)$ (see e.g.\ \cite{VirayVogt2024}). 
\end{remark}

\begin{definition}
\label{defn:refined-obstruction} 
Consider a collection $\{ \mathscr{X}^{(L)} \}_{L/k}$ of 
obstruction sets for $X$, where $L/k$ ranges over the finite extensions of $k$; that is, for each finite extension $L/k$, we have $X(L) \subset \mathscr{X}^{(L)} \subset X(L_{\Omega})$. 
We define the \emph{refined obstruction set for 0-cycles with respect to $\{ \mathscr{X}^{(L)} \}_{L/k}$} to be
\[ 
\RefinedObs{X}^{\{ \mathscr{X}^{(L)} \}_{L/k}} \coloneq\refrec_{X, \Omega}\left(\bigoplus_{L/k \ \textrm{finite}}\Z[\mathscr{X}^{(L)}]\right) \subset Z_{0,\Omega}(X).
\]
For $\delta\in\Z$, we define $\RefinedObsDeg{X}{\delta}^{\{ \mathscr{X}^{(L)} \}_{L/k}}\coloneq\RefinedObs{X}^{\{ \mathscr{X}^{(L)} \}_{L/k}}\cap Z_{0,\Omega}^\delta(X).$
\end{definition}

\begin{remark}
\
\begin{enumerate}[(i)]
    \item For the applications in this paper, we do not require the full flexibility of Definition~\ref{defn:refined-obstruction}: our obstruction sets $\mathscr{X}^{(L)}$ are defined in a uniform way, using some obstruction for rational points (for example the Brauer--Manin obstruction). In these cases we use more compact notation for the set $\RefinedObs{X}^{\{ \mathscr{X}^{(L)} \}_{L/k}}$, replacing the collection $\{ \mathscr{X}^{(L)} \}_{L/k}$ by
    the defining obstruction, cf.\ Examples~\ref{example:obstruction-sets-main}.
    \item On the other hand, Definition~\ref{defn:refined-obstruction} can be generalised even further, by expanding the notion of an obstruction set, and this we do exploit: see Definition~\ref{defn:generalised-refined-obstruction} and the results in Section~\ref{sec:corwin-schlank}. 
\end{enumerate}
\end{remark}

\begin{examples}
\label{example:obstruction-sets-main}
\begin{enumerate}[(i)]
    \item Let $G$ be a linear algebraic group over $k$ and let $f \colon Y \rightarrow X$ be an $X$-torsor under $G$ defined over $k$. The \emph{refined $f$-descent set for 0-cycles} is \[\RefinedObs{X}^{f} = \refrec_{X, \Omega}\left(\bigoplus_{L/k \ \textrm{finite}}\Z[X(\A_L)^{f}]\right)  .\]

    \item\label{ex:part:BM} The \emph{refined Brauer--Manin set for $0$-cycles} is \[\RefinedObs{X}^{\Br} = \refrec_{X, \Omega}\left(\bigoplus_{L/k \ \textrm{finite}}\Z[X(\A_L)^{\Br}]\right)  .\]

    \item\label{ex:part:Unram-BM} The \emph{refined unramified Brauer--Manin set for $0$-cycles} is \[\RefinedObs{X}^{\Brnr} = \refrec_{X, \Omega}\left(\bigoplus_{L/k \ \textrm{finite}}\Z[X(L_\Omega)^{\Brnr}]\right)  .\]

    \item\label{ex:part:descent} The \emph{refined descent set for $0$-cycles} is \[\RefinedObs{X}^{\operatorname{desc}} = \refrec_{X, \Omega}\left(\bigoplus_{L/k \ \textrm{finite}}\Z[X(\A_L)^{\operatorname{desc}}]\right)  ,\] where 
    $$X(\A_L)^{\text{desc}} \coloneq \bigcap_{ \substack{G \textrm{ lin alg group}\\\textrm{over $L$}}} \bigcap_{[f: Y \to X_L] \in \H^1(X_L, G)} X(\A_L)^{f}.$$
    We note that $\RefinedObs{X}^{\operatorname{desc}}$  is \emph{not} defined as $ \bigcap_{ \substack{G \textrm{ lin alg group}\\\textrm{over $k$}}} \bigcap_{[f: Y \to X] \in \H^1(X, G)} \RefinedObs{X}^{f}$. 
    We can also define the \emph{refined connected descent set for 0-cycles} $\RefinedObs{X}^{\conn}$ in a similar way to $\RefinedObs{X}^{\operatorname{desc}}$ by only considering connected linear algebraic groups, the \emph{refined finite descent set for 0-cycles} $\RefinedObs{X}^{\et}$ by only considering finite linear algebraic groups, and so on. 
    
    \item\label{ex:part:etale-Br} The \emph{refined étale-Brauer set for $0$-cycles} is \[\RefinedObs{X}^{\et,\Br} = \refrec_{X, \Omega}\left(\bigoplus_{L/k \ \textrm{finite}}\Z[X(\A_L)^{\et, \Br}]\right),\]
    where  
    $$X(\A_L)^{\et, \Br} \coloneq \bigcap_{ \substack{G \textrm{ finite lin alg group}\\\textrm{over $L$}}} \bigcap_{[f: Y \to X_L] \in \H^1(X_L, G)} \bigcup_{\sigma \in \H^1(L, G)} f^{\sigma}(Y^{\sigma}(\A_L)^{\Br}).$$
\end{enumerate}
\end{examples}

\begin{lemma}\label{lem:globalinobs} 
Let $\{ \mathscr{X}^{(L)} \}_{L/k}$ be a collection of  obstruction sets for $X$. Then the natural inclusion $Z_0(X) \subset Z_{0,\Omega}(X)$ induces an inclusion $Z_0(X) \subset \RefinedObs{X}^{\{ \mathscr{X}^{(L)} \}_{L/k}}$. 
\end{lemma}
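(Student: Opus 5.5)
The argument is a diagram chase in \eqref{eq:recX}. Fix $z\in Z_0(X)$, which we regard inside $Z_{0,\Omega}(X)$ via the diagonal inclusion. The plan is to write $z$ as the image under $\refrec_X$ of something built from genuinely global points. We then push that element into $\bigoplus_L \Z[\mathscr{X}^{(L)}]$ and check that $\refrec_{X,\Omega}$ sends it back to $z$.

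First, by Remark~\ref{rmk:rec surj} (the bottom arrow of \eqref{eq:recX} is surjective), choose finitely many finite extensions $L_1,\dots,L_n$ of $k$, points $x_{i,j}\in X(L_i)$ and integers $n_{i,j}$ with
\[
z=\refrec_X\Bigl(\sum_{i,j} n_{i,j}\,x_{i,j}\Bigr)=\sum_{i,j} n_{i,j}\,N_{L_i/k}(x_{i,j}).
\]
Concretely, for each closed point $x$ in the support of $z$ one may take $L=k(x)$ and the tautological point $x\in X(k(x))$. Then $N_{k(x)/k}$ of the corresponding degree-one 0-cycle on $X_{k(x)}$ is exactly $x$, which makes the surjectivity transparent.

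Second, since $X(L_i)\subset \mathscr{X}^{(L_i)}\subset X((L_i)_\Omega)$ by the definition of a collection of obstruction sets (Definition~\ref{defn:refined-obstruction}), the diagonal image of each $x_{i,j}$ lies in $\mathscr{X}^{(L_i)}$. Hence the element $\xi:=\sum n_{i,j}(x_{i,j})_w$ lies in $\bigoplus_L \Z[\mathscr{X}^{(L)}]$. Commutativity of \eqref{eq:recX}, which rests on \eqref{eq:locglobnorm} and \eqref{eq:rec}, then gives
\[
\refrec_{X,\Omega}(\xi)=\sum_{i,j} n_{i,j}\,N^{\Omega}_{L_i/k}\bigl((x_{i,j})_{w}\bigr)=\sum_{i,j} n_{i,j}\,\bigl(N_{L_i/k}(x_{i,j})\bigr)_v=(z)_v .
\]
So $z\in \RefinedObs{X}^{\{\mathscr{X}^{(L)}\}_{L/k}}$.

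The only point needing care is the compatibility \eqref{eq:locglobnorm}: the local corestrictions $\sum_{w\mid v} N_{L_w/k_v}$ applied to the diagonal image of a global 0-cycle recover the image of the global corestriction. This follows from $L\otimes_k k_v\cong\prod_{w\mid v}L_w$, which decomposes the base change of a closed point, and it is already recorded in the paper. I expect no real obstacle beyond this bookkeeping.
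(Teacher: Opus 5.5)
Your proposal is correct and matches the paper's proof, which cites exactly Remark~\ref{rmk:rec surj} (surjectivity of $\refrec_X$) and the commutative diagram~\eqref{eq:recX}, together with the defining inclusion $X(L)\subset\mathscr{X}^{(L)}$. You simply unpack that diagram chase explicitly, including the check that the tautological point in $X(k(x))$ corestricts to $x$.
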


\begin{proof}
This follows from Remark~\ref{rmk:rec surj} and the commutative diagram~\eqref{eq:recX}. 
\end{proof}

\begin{remark}
\label{remark:refined-vs-non-refined}
The reason for the terminology `refined' is that there exist other obstructions to 0-cycles in the literature which are \emph{a priori} coarser. 
For instance, for a quasi-projective variety \(X\),
\[
\RefinedObs{X}^{\Brnr} \subset Z_{0,\Omega}(X)^{\Brnr},
\]
where \(Z_{0,\Omega}(X)^{\Brnr}\) is defined as in Definition~\ref{defn:unrefined-Brauer} with $B=\Brnr(X)$. 
To give another example, if $f: Y \to X$ is a torsor under some linear algebraic group $G$ over $k$, then we have a natural inclusion 
\[\RefinedObs{X}^{f} \subset Z_{0,\Omega}(X)^{f}.\] Moreover, statements involving multiple torsors are true in some cases, see e.g.\ Lemma \ref{lem:1} and Remark~\ref{rem:1}.
\end{remark}

\section{Hasse principle and weak approximation for 0-cycles} 
\label{subsec:prelim-weak-approximation}
Let $k$ be a number field and let $X$ be a smooth, quasi-projective, geometrically integral variety defined over $k$. It is easy to generalise the Hasse principle for rational points to the setting of 0-cycles: 
we say that the Hasse principle for 0-cycles of degree $\delta$ holds if $Z^{\delta}_{0,\Omega}(X)\neq \emptyset$ implies $Z^\delta_0(X)\neq \emptyset$. 

Now recall that weak approximation for rational points holds if \(X(k)\) is dense in \(\prod_{v \in \Omega} X(k_v)\) with respect to the product topology. Explicitly, for any \((x_v)_v \in \prod_{v \in \Omega} X(k_v)\), and any open set \(U \subset \prod_{v \in \Omega} X(k_v)\) containing \((x_v)_v\), there is a rational point \(y \in X(k)\) such that the image of \(y\) under the map \(X(k) \hookrightarrow \prod_{v \in \Omega} X(k_v)\) lies in \(U\). 
In the product topology, open sets are of the form \(U = \prod_v U_v\) where $U_v$ is open in $X(k_v)$ for all $v\in\Omega$ and \(U_v = X(k_v)\) for all but finitely many \(v \in \Omega\).

In order to define weak approximation for 0-cycles, we first recall the definition of the Suslin homology group of degree $0$. We will use the notation from \cite[Section 3]{balestrieri-berg} for most of this section. We refer the reader to \cite[Section 5]{Sch07} and \cite{ES08} for further exposition of Suslin homology.
Recall that the group of finite correspondences \(\Cor(V,W)\) between two \(k\)-varieties \(V\) and \(W\) is the set of $\Z$-linear combinations of integral subschemes of \(V \times_k W\) that are finite and flat over \(V\). For the $k$-variety \(X\), \(\Cor(\spec k, X)\) is generated by integral subschemes \(Z \subset X\) finite and flat over \(\spec k\), i.e.\ by closed points on \(X\). Thus, \(\Cor(\spec k, X)\) is the group of 0-cycles on \(X\).
For an integral subscheme \(W \subset \A^1 \times_k X\) in \(\Cor(\A^1, X)\),
any point \(\lambda : \spec k \to \A^1\) defines an integral subscheme \(W_{\lambda} \in \Cor(\spec k, X)\) via pullback along \(X \xrightarrow{(\lambda, id)} \A^1 \times_k X\). This map can be extended linearly. Fixing \(0, 1 \in \A^1\) we get a map 
\[
\Phi \colon \Cor(\A^1, X) \rightarrow \Cor(\spec k, X); \qquad \sum n_W W \mapsto \sum n_W\left( W_0 - W_1\right).
\]

\begin{definition}\label{def suslin hom}
The \emph{Suslin homology group of degree 0 on \(X\)} is defined as the quotient
\[
h_0(X) = Z_0(X)/\im(\Phi).
\]
\end{definition}

Alternatively (see~\cite[Theorem~5.1]{Sch07}), $h_0(X)$ is the quotient of $Z_0(X)$ by the subgroup generated by elements of the form $\divisor( g )$, where $g\in k(C)$ is a rational function on a closed integral curve $C$ on $X$ such that $g$ is defined and $g\equiv 1$ at every point of $\widetilde{C}^c\setminus \widetilde{C}$. Here, $\widetilde{C}$ denotes the normalisation of $C$ in $k(C)$, and $\widetilde{C}^c$ denotes its smooth compactification. Note that when \(X\) is proper over \(k\), the Suslin homology group $h_0(X)$ is the same as the Chow group $\CH_0(X) := Z_0(X)/\sim$, where $\sim$ denotes rational equivalence of 0-cycles on \(X\).

\begin{definition}[Weak approximation topology]\label{def:weakapproxtop}
Let \((0)_v \in Z_{0, \Omega}(X)\) denote the trivial $\Omega$-local 0-cycle. For $n\in\Z_{>0}$ and a finite subset $S\subset \Omega$, let     
\[
\mathcal{A}_{n,S}
\coloneq\left\{(z_v)_v\in Z_{0,\Omega}(X):
 z_v \text{ maps to }0 \text{ in } h_0(X_{k_v})/n \text{ for all }v\in S\right\}.
\]
Note that if \(n \mid m\) and \(T \subset S\), then 
\begin{align*}
\label{eq:open-inclusions}
\mathcal{A}_{m,S} \subset \mathcal{A}_{n,T}.
\end{align*}
The sets $\mathcal{A}_{n,S}$ form a neighbourhood basis of \((0)_v\) defining the \emph{weak approximation topology} on $Z_{0,\Omega}(X)$. 
\end{definition}

From now on, for a number field \(k\) with set of places \(\Omega\), we will equip \(Z_{0, \Omega}(X)\) with the weak approximation topology as defined in Definition~\ref{def:weakapproxtop}. 

\begin{definition}[Weak approximation for 0-cycles]\label{Def WA for 0cycles}
We say that \emph{\(X\) satisfies weak approximation for 0-cycles of degree \(\delta\)} if the following holds: given \(n \in \Z_{>0}\), a finite set \(S \subset \Omega\), and \((z_v)_v \in Z_{0, \Omega}^\delta(X)\), there is a global 0-cycle \(\globcyc \in Z_0^\delta(X)\) such that \(\globcyc\) and \(z_v\) have the same image in \(h_0(X_{k_v})/n\) for all \(v \in S\). In other words, $\overline{Z_0^\delta(X)} = Z_{0, \Omega}^\delta(X)$, where the closure is taken
with respect to the weak approximation topology. Similarly, we say that \emph{$X$ satisfies weak approximation for $0$-cycles} if \emph{$\overline{Z_0(X)} = Z_{0, \Omega}(X)$}.
\end{definition}

Proposition 4.4 of \cite{balestrieri-berg} gives a more concrete way of getting to grips with weak approximation. 

\begin{definition}[{Sufficiently close points: \cite[Proposition~1.1(3)]{liang-principelocalglobal-11}}]
Let \(v \in \Omega \), let \(Y\) be a variety over \(k_v\). A closed point \(y \in Y\) can be viewed as a point in $Y(k_v(y))$, where $k_v(y)$ denotes the residue field of $y$. Let \(U_y \subset Y(k_v(y))\) denote an open neighbourhood of \(y\) with respect to the \(v\)-adic analytic topology.
Another closed point \(y' \in Y\) is called \emph{sufficiently close to \(y\) with respect to \(U_y\)} if there exists an isomorphism $k_v(y')\cong k_v(y)$ such that, viewing \(y'\) as a point in \(Y(k_v(y))\) via this isomorphism, we have \(y' \in U_y \subset Y(k_v(y))\).
This definition is extended linearly to 0-cycles on \(Y\), but now with respect to a system of open neighbourhoods of closed points in the support of the 0-cycle. 
\end{definition}

\begin{proposition}[{\cite[Proposition 4.4]{balestrieri-berg}}]
\label{prop:BB-Wittenberg-rephrasing-WA}
Let \(z_v \in Z^{\delta}_{0}(X_{k_v})\) and let \(n \in \Z_{>0}\). Then there is a system of open neighbourhoods of the points in the support of $z_v$ such that, if \(z_v' \in Z^{\delta}_{0}(X_{k_v})\) is sufficiently close to \(z_v\) with respect to this system, then \(z_v\) and \(z_v'\) have the same image in \(h_0(X_{k_v})/n\).
\end{proposition}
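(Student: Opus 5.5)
By linearity, it suffices to treat a single closed point $x \in (X_{k_v})_0$ with residue field $K := k_v(x)$, a finite extension of $k_v$. Write $z_v = \sum_x n_x x$ and $N := \max_x |n_x|$. If each closed point $x'$ sufficiently close to $x$ has the same class as $x$ in $h_0(X_{k_v})/n$, then replacing the points of $z_v$ one at a time gives the result for $z_v$. More precisely, a cycle $z_v'$ sufficiently close to $z_v$ is a sum $\sum n_x x'$ with each $x'$ close to $x$, so its class agrees with that of $z_v$. Degrees match automatically, since $k_v(x') \cong k_v(x)$.

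For a single point, the plan is to push forward along $X_K \to X_{k_v}$, using $N_{K/k_v}$, which is compatible with Suslin homology. The point $x$ corresponds to a $K$-rational point $\tilde x \in X(K)$, and we have $N_{K/k_v}(\tilde x) = x$ as cycles. Likewise, a point $x'$ with $k_v(x') \cong K$, viewed in $X(K)$ via the isomorphism, gives $\tilde x'$ with $N_{K/k_v}(\tilde x') = x'$. So it is enough to show the following: for $X$ smooth over a $p$-adic or archimedean field $K$, every $K$-point has a $v$-adic neighbourhood $U$ such that all $K$-points in $U$ have the same class in $h_0(X_K)/n$.

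For this local statement, I would use that $h_0(X_K)/n$ receives the class map from $X(K)$, and show this map is locally constant. In the archimedean case, $h_0(X_{\mathbb{R}})/n$ for $n$ odd or for $\mathbb{C}$ is handled by connectedness arguments: points in the same connected component of $X(K)$ give cycles differing by an $n$-divisible class. I expect to use the known result for proper $X$ (Colliot-Thélène's local constancy of $X(K) \to \CH_0(X)/n$, via the Brauer pairing and Saito–Sato type finiteness), and to extend it to quasi-projective $X$ via Suslin homology. To do this, I would choose a smooth curve $C \subset X$ through $x$ (Bertini), reducing to curves. On a smooth curve $C$ with compactification $\bar C$, Suslin homology is $\pic(\bar C, D)$, the relative Picard group with modulus $D = \bar C \setminus C$. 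Points close to $x$ differ by an element of $\pic^0(\bar C, D)(K)$, a generalized Jacobian $J_D(K)$, which is a $K$-analytic Lie group. Since $nJ_D(K)$ contains an open subgroup ($J_D(K)$ has a finite-index open subgroup isomorphic to $\mathcal{O}_K^g \times$ a compact torsion part, and multiplication by $n$ is étale), the map $y \mapsto [y - x] \in J_D(K)/n$ is continuous into a discrete group, hence locally constant.

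The main obstacle is that points near $x$ in $X(K)$ need not lie on a single chosen curve. I would handle this by taking, for each nearby $x'$, a curve through both $x$ and $x'$ in a uniformly controlled family, using the fact that in a small analytic chart (the implicit function theorem on smooth $X$), $x$ and $x'$ lie on a line segment in an affine chart. Concretely, I would use the family of lines in a local étale coordinate system $X \to \mathbb{A}^d$ and argue uniform openness over the compact family of directions. Alternatively, and more cleanly, I would cite \cite[Proposition 4.4]{balestrieri-berg} and \cite{liang-principelocalglobal-11}, which carry out exactly this argument.
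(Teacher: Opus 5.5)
Your reduction is sound and has the same shape as the argument the paper relies on. By linearity one closed point $x$ suffices. With $K=k_v(x)$, the corestriction $N_{K/k_v}$ sends the canonical point $\tilde{x}\in X(K)$ to $x$, sends any $\tilde{x}'$ obtained via $k_v(x')\cong K$ to $x'$, and respects $h_0$. So everything reduces to local constancy of $X(K)\to h_0(X_K)/n$. The paper proves nothing here itself: it imports the statement from \cite[Proposition~4.4]{balestrieri-berg}, whose real input is Wittenberg's theorem \cite[Proposition~4.6]{balestrieri-berg} that $\sym^d(X_{k_v})(k_v)\to h_0(X_{k_v})/n$ is locally constant for all $n$ and $d$. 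Your reduced statement is the case $d=1$ over $K$. Alternatively, you can view $x$ directly as a $k_v$-point of $\sym^d(X_{k_v})$ and skip the corestriction.

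The gap is that you never actually obtain this local constancy. Your fallback of citing \cite[Proposition~4.4]{balestrieri-berg} is circular, since that is the statement under proof; what must be cited is the local constancy theorem itself. Your own routes to it do not close:
\begin{enumerate}[(i)]
\item For proper $X$, local constancy of Brauer--Manin evaluation does not give local constancy in $\CH_0(X)/n$. Neither that pairing nor the cycle class map mod $n$ is known to separate $\CH_0(X)/n$ in general.
\item Even a proper-case result for a compactification $\bar X$ would not descend to $h_0(X)/n$, which is strictly finer. For example, for $X=\mathbb{G}_m$ over $K$ one has $h_0(X)/n\cong\Z/n\oplus K^*/K^{*n}$, while $\CH_0(\P^1_K)/n=\Z/n$.
\item In the curve argument, the single-curve step is fine, but the neighbourhood it gives depends on the curve. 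Compactness over the directions $u$ in $\P^{\dim X-1}(K)$ only helps under a further condition. Take the curves $C_u$ obtained by pulling back the lines $\ell_u$ through the image of $x$ under your \'etale chart $\phi$. These curves, with their compactifications and moduli $\bar C_u\setminus C_u$, must form a family whose relative generalized Jacobian $J$ is a smooth group scheme near \emph{every} $u$. Then $[n]$ is \'etale, $[n](J(K))$ is an open set containing the zero section, and continuity of $(u,t)\mapsto[x'_{u,t}]-[x]$ gives a uniform neighbourhood. Your sketch does not rule out special directions where $\bar C_u$ becomes singular or reducible, or where boundary points collide. Every direction does occur as $x'\to x$.
\item The archimedean case with $K=\mathbb{R}$ and $n$ even is only asserted.
\end{enumerate}
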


Note that this can be extended to a finite set \(S\) of places, and also motivates the definition of the weak approximation topology.  The proof of this proposition relies upon the fact (proved by Wittenberg, see~\cite[Proposition~4.6]{balestrieri-berg}) that for any \(n\) and any \(d\), the map 
\[
\sym^d(X_{k_v}) \to h_0(X_{k_v})/n
\]
is locally constant.

We now give an immediate general result related to weak approximation for 0-cycles using refined obstruction sets, given knowledge of the behaviour of rational points over finite extensions of the base field.

\begin{proposition} \label{prop:propofrefobs}
Let $X$ be a smooth, quasi-projective, geometrically integral variety over a number field $k$. For each finite extension $L/k$, let $\mathscr{X}^{(L)}\subset X(L_{\Omega})$ be an obstruction set equipped with either the adelic or product topology, such that $X(L)$ is dense in $\mathscr{X}^{(L)}$ with respect to this topology. 
Then $\RefinedObs{X}^{\{ \mathscr{X}^{(L)} \}_{L/k}} \subset \overline{Z_0(X)}$, where the closure is with respect to the weak approximation topology for 0-cycles. 
Moreover, if we fix any degree $\delta$, we have $\widetilde{Z}_{0, \Omega}^{\delta}(X)^{\{ \mathscr{X}^{(L)} \}_{L/k}} \subset \overline{Z^{\delta}_0(X)}$.   
\end{proposition}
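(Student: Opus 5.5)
Fix an element $z \in \RefinedObs{X}^{\{ \mathscr{X}^{(L)} \}_{L/k}}$, say $z = \refrec_{X,\Omega}(\xi)$ with $\xi = \sum_{i=1}^m n_i \,x^{(i)}$, where $x^{(i)} = (x^{(i)}_w)_{w \in \Omega_{L_i}} \in \mathscr{X}^{(L_i)}$ for finitely many finite extensions $L_1,\dots,L_m$ (repetitions allowed). Let $\mathcal{A}_{n,S}$ be a basic neighbourhood of $0$ in the weak approximation topology. The goal is to find a global cycle $\globcyc \in Z_0(X)$ with $\globcyc - z \in \mathcal{A}_{n,S}$. If $\delta=\deg z$ is fixed, the cycle $\globcyc$ should also have degree $\delta$.

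The plan is to approximate each $x^{(i)}$ by a global point $y^{(i)} \in X(L_i)$ and set $\globcyc := \refrec_X(\sum_i n_i y^{(i)}) = \sum_i n_i N_{L_i/k}(y^{(i)})$. By the commutative diagram \eqref{eq:recX}, its image in $Z_{0,\Omega}(X)$ is $\sum_i n_i N^\Omega_{L_i/k}((y^{(i)})_w)$. Each $y^{(i)}$ is a degree-$1$ cycle on $X_{L_i}$, so $N_{L_i/k}(y^{(i)})$ and $N^\Omega_{L_i/k}(x^{(i)})$ both have degree $[L_i:k]$ at every place. Hence $\deg \globcyc = \deg z$, which gives the degree-$\delta$ refinement automatically once the first claim is proved.

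For the approximation step, work at a single $v \in S$ and a single $i$. The $v$-component of $N^\Omega_{L_i/k}(x^{(i)})$ is $\sum_{w\mid v} N_{L_{i,w}/k_v}(x^{(i)}_w)$. Each $x^{(i)}_w \in X(L_{i,w})$ maps to a closed point $\bar x_w$ of $X_{k_v}$, and $N_{L_{i,w}/k_v}(x^{(i)}_w) = [L_{i,w}:k_v(\bar x_w)]\,\bar x_w$. Apply Proposition~\ref{prop:BB-Wittenberg-rephrasing-WA}, extended to the finite set $S$, to the finitely many cycles $z_v$ for $v\in S$. This gives, for each closed point in their support, an open neighbourhood in $X(k_v(\bar x_w))$. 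Pulling these back along $X(L_{i,w}) \to X(k_v(\bar x_w))$ is awkward, because the residue field of the image need not equal $L_{i,w}$. The cleaner route is to observe that the map
\[ X(L_{i,w}) \to h_0(X_{k_v})/n, \qquad P \mapsto \bigl[N_{L_{i,w}/k_v}(P)\bigr] \]
is locally constant. This follows from Wittenberg's local constancy of $\sym^d(X_{k_v}) \to h_0(X_{k_v})/n$ with $d=[L_{i,w}:k_v]$, composed with the continuous map $X(L_{i,w}) = (\mathrm{Res}_{L_{i,w}/k_v}X)(k_v) \to \sym^d(X)(k_v)$ sending a point to its norm cycle. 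Alternatively one can invoke Proposition~\ref{prop:BB-Wittenberg-rephrasing-WA} and note that points $P$ close to $x^{(i)}_w$ give closed points whose norm cycle is sufficiently close. Either way, there is an open neighbourhood $V^{(i)}_w \subset X(L_{i,w})$ of $x^{(i)}_w$ on which this class is constant.

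Let $T_i$ be the finite set of places of $L_i$ above $S$. The set $\prod_{w\in T_i} V^{(i)}_w \times \prod_{w\notin T_i} X(L_{i,w})$, intersected with $\mathscr{X}^{(L_i)}$, is an open neighbourhood of $x^{(i)}$ for the product topology. It is also open for the adelic topology, which is finer. Density of $X(L_i)$ in $\mathscr{X}^{(L_i)}$ then provides $y^{(i)} \in X(L_i)$ lying in it. Hence for every $v\in S$ and every $i$, the classes of $N^\Omega_{L_i/k}(y^{(i)})_v$ and $N^\Omega_{L_i/k}(x^{(i)})_v$ agree in $h_0(X_{k_v})/n$. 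By additivity and linearity in the $n_i$, $\globcyc$ and $z$ agree in $h_0(X_{k_v})/n$ for all $v \in S$, i.e.\ $\globcyc - z \in \mathcal{A}_{n,S}$. The main obstacle I expect is making the local constancy of $P \mapsto [N_{L_{i,w}/k_v}(P)]$ rigorous: it requires checking that the norm map to $\sym^d$ is continuous in the $v$-adic topology and compatible with the identification of $0$-cycles with points of symmetric powers. I would handle this through the Weil restriction as sketched, with Proposition~\ref{prop:BB-Wittenberg-rephrasing-WA} as the fallback.
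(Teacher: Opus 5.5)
Your proof is correct and has the same skeleton as the paper's. You write $z$ as an integral combination of the $N^\Omega_{L_i/k}(x^{(i)})$, replace each $x^{(i)}$ by a global point of $X(L_i)$ using the density hypothesis, and apply $\refrec_X$. You then compare classes in $h_0(X_{k_v})/n$ for $v\in S$, and the degree statement follows from $\deg N_{L_i/k}(y^{(i)})=[L_i:k]$.

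The real difference is in how the local step is justified. The paper stays over $L_w$. There $x_w$ and the approximating global point are both $L_w$-rational, i.e.\ degree-$1$ closed points of $X_{L_w}$. So Proposition~\ref{prop:BB-Wittenberg-rephrasing-WA} applied to $X_{L_w}$ gives equality in $h_0(X_{L_w})/n$ with no residue-field issue. The paper then pushes forward along $N_{L_w/k_v}$, tacitly using that the norm induces a map $h_0(X_{L_w})/n\to h_0(X_{k_v})/n$.

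You instead show directly that $P\mapsto [N_{L_w/k_v}(P)]\in h_0(X_{k_v})/n$ is locally constant on $X(L_w)$. You do this by composing the norm morphism $\mathrm{Res}_{L_w/k_v}(X_{L_w})\to \sym^d(X_{k_v})$ with Wittenberg's local constancy for $d=[L_w:k_v]$. This avoids any appeal to functoriality of $h_0(\cdot)/n$ under the norm. The cost is that you need the norm map into the symmetric power and its $v$-adic continuity, both of which are fine for quasi-projective $X$.

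One caveat concerns your stated fallback, applying Proposition~\ref{prop:BB-Wittenberg-rephrasing-WA} over $k_v$ to the norm cycles. As phrased, it does not work, for exactly the residue-field reason you noticed earlier. Suppose $x_w$ comes from $X(k_v)$, so that $N_{L_w/k_v}(x_w)=d\,\bar{x}_w$. A nearby $P\in X(L_w)$ whose coordinates generate $L_w$ over $k_v$ has $N_{L_w/k_v}(P)$ equal to a single closed point with residue field $L_w$. These two cycles are not sufficiently close in the sense of the paper's definition. The correct fallback is to apply that proposition over $L_w$ and then push forward, which is precisely the paper's argument.
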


\begin{proof}
Let $S \subset \Omega_k$ be a finite non-empty set of places and let $n \in \Z_{>0}$. Let $(z_v)_v \in \RefinedObs{X}^{\{ \mathscr{X}^{(L)} \}_{L/k}}$. By Definition~\ref{defn:refined-obstruction}, there is a finite non-empty set $\mathscr{F}$ of finite extensions of $k$ and, for each $L \in \mathscr{F}$, a finite set $\mathscr{P}_L$ of $\Omega_L$-local points of the form $(x_w)_{w \in \Omega_L} \in \mathscr{X}^{(L)}$ such that, for any $v \in \Omega_k$,  $z_v$ can be written as 
\[ 
z_v = \sum_{L \in \mathscr{F}} \sum_{(x_w)_{w \in \Omega_L} \in \mathscr{P}_L} n_{(x_w)_w} \sum_{w|v} N_{L_{w}/ k_v}(x_w)
\]
for some integers $n_{(x_w)_w}  \in \Z$.
By assumption, for each $L \in \mathscr{F}$ and each $(x_w)_{w \in \Omega_L} \in \mathscr{P}_L$, there is a global point $x_{L, (x_w)_w} \in X(L)$ such that $x_w$ and $x_{L, (x_w)_w}$ are sufficiently close (implying, in particular, that they have the same image in $h_0(X_{L_w})/n$ for all $w \in \Omega_L$ above the places in $S$). 
Hence, if we consider the global 0-cycle
\[ \mathbf{z}\coloneq   \sum_{L \in \mathscr{F}} \sum_{(x_w)_{w \in \Omega_L} \in \mathscr{P}_L} n_{(x_w)_w}  N_{L/ k}(x_{L, (x_w)_w})\]
it follows that $\mathbf{z}$ 
and $z_v$ have the same image in $h_0(X_{k_v})/n$ for all $v \in S$, as required.  
If $(z_v)_v \in \widetilde{Z}_{0, \Omega}^{\delta}(X)^{\{ \mathscr{X}^{(L)} \}_{L/k}}$, then $\deg(z_v) = \delta$ for all $v \in \Omega$, and it is straightforward to check that $\deg(z) = \delta$ as well.
\end{proof}

\section{Locally constant Brauer classes}
\label{sec:locconst}

In this section, we recall various alternative definitions of the subgroup of locally constant Brauer classes and collect some results on obstruction sets for torsors under abelian varieties.

Let $k$ be a number field and let $X$ be a smooth, quasi-projective, geometrically integral variety defined over $k$.
Recall that \(\Br_1(X) \coloneq\ker(\Br(X) \to \Br(X_{\ol{k}}))\) and \(\Br_0(X) \coloneq\im(\Br(k) \to \Br(X))\subset \Br_1(X)
\). 
The Hochschild--Serre spectral sequence gives rise to the homorphisms \(r\) and \(r_v\) in the following commutative diagram

\begin{center}
\begin{tikzcd}
     0\arrow[d]& & 
     \\
    \Sha(k, \pic^0(X_{\ol{k}}) ) \arrow[d]& & & &
    \\
    \H^1(k, \pic^0(X_{\ol{k}})) \arrow[r, "i_*"] \arrow[d]& \H^1(k, \pic(X_{\ol{k}})) \arrow[d] & {\Br_1(X)} 
    \arrow[l, "r"'] \arrow[d] & {\Br_0(X)}\ar[d] \arrow[l] & {0} \arrow[l] \\
    \prod_v \H^1(k_v, \pic^0(X_{\ol{k}_{v}})) \arrow[r, "i_*"]&  \prod_v\H^1(k_v, \pic(X_{\ol{k}_{v}})) & {\prod_v \Br_1(X_{k_v})}
    \arrow[l, "\prod r_v"'] & {\prod_v \Br_0(X_{k_v})\arrow[l]} & {\arrow[l] 0,}
\end{tikzcd}
\end{center}
where the maps $i_*$ are induced by the inclusions  $i:\pic^0(X_{\ol{k}})\subset \pic(X_{\ol{k}})$ and $i:\pic^0(X_{\ol{k}_v})\subset \pic(X_{\ol{k}_{v}})$. 

Let \(\Br_{1/2}(X)=  r^{-1}\left(i_*(\H^1(k, \pic^0 (X_{\ol{k}}) ) \right) \). There are several definitions of `the subgroup of locally constant Brauer classes' in the literature, each of them contravariantly functorial in the variety $X$. For instance, Creutz (\cite{Creutz-torsors}) defines this subgroup to be 
\[
\locconstBr_{Cr}(X) \coloneq \ker \left( \Br_{1/2}(X) \to \prod_{v} \Br_{1}(X_{k_v})/\Br_0(X_{k_v})\right).
\]
On the other hand, Skorobogatov (\cite[Part two, Notation]{Sko-Torsors}) uses \[\locconstBr_{Sk}(X)\coloneq\ker \left( \Br_1(X) \to \prod_{v} \Br_1(X_{k_v})/\Br_0(X_{k_v})
   \right),\] while Manin (\cite[Th\'{e}or\`{e}me~6]{Manin71}) uses \[\locconstBr_{Ma}(X) \coloneq r^{-1}(i_*(\Sha(k, \pic^0 (X_{\ol{k}}))).\] 
With this notation, we have 
\begin{equation}\label{eq:subgroups of Brnr}
  \locconstBr_{Ma}(X) \subset \locconstBr_{Cr}(X) \subset \locconstBr_{Sk}(X)\subset \Brnr(X),  
\end{equation}
where the rightmost inclusion follows from~\cite[(6.1.4)]{Sansuc81}. In particular, if $X$ is proper, then 
\begin{equation*}
\begin{tikzcd}
\locconstBr_{Ma}(X) \ar[r, phantom, "\subset"]&\locconstBr_{Cr}(X) \ar[r, phantom, "\subset"] \ar[dr, phantom, sloped, "\subset" ] &\locconstBr_{Sk}(X)\ar[r, phantom, "\subset"]& \Br_1(X)\ar[r, phantom, "\subset"]&\Br(X)\\
& & \Br_{1/2}(X) \ar[ur, phantom, sloped, "\subset" ] & &
\end{tikzcd}
\end{equation*}
and hence
\begin{equation}\label{eq: russian B sets}
\begin{tikzcd}
X(k_{\Omega})^{\Br(X)} \ar[r, phantom, "\subset"]&
X(k_{\Omega})^{\Br_1(X)} \ar[r, phantom, "\subset"] \ar[dr, phantom, sloped, "\subset"]& X(k_{\Omega})^{\locconstBr_{Sk}(X)} \ar[r, phantom, "\subset"]&X(k_{\Omega})^{\locconstBr_{Cr}(X)} \ar[r, phantom, "\subset"]&X(k_{\Omega})^{\locconstBr_{Ma}(X)}\\
& & X(k_{\Omega})^{\Br_{1/2}(X)} \ar[ur, phantom, sloped, "\subset"] & &
\end{tikzcd}
\end{equation}

\begin{remark}\label{remark:locconstBr finite}
Suppose that $X$ is proper and let $A=\pic^0(X)$. By \cite[Proposition 2.14]{elementary}, if $\Sha(A_{L})$ is finite for every finite extension $L/k$ then $\locconstBr_{Sk}(X_L)/\Br_0(X_L)$ is finite, and hence so are $\locconstBr_{Cr}(X_L)/\Br_0(X_L)$ and $\locconstBr_{Ma}(X_L)/\Br_0(X_L)$. The finiteness of these subquotients of $\Br(X_L)$ can be advantageous when computing the associated obstruction sets $X(L_{\Omega})^{\locconstBr_{Sk}(X_L)}, X(L_{\Omega})^{\locconstBr_{Cr}(X_L)}$ and $ X(L_{\Omega})^{\locconstBr_{Ma}(X_L)}$, since the Brauer--Manin pairing factors through $\Br(X_L)/\Br_0(X_L)$.
\end{remark}

We now collect some useful results on sufficiency of the Brauer--Manin obstruction for local-global principles on $k$-torsors under abelian varieties with finite Tate--Shafarevich groups.

\begin{theorem}
\label{thm:HP/WA-torsors}
Let \(A\) be an abelian variety over \(k\) such that \(\Sha(A)\) is finite. Let \(Y\) be a \(k\)-torsor under \(A\). 
\begin{enumerate}[(i)]
\item\label{part:HP/WA-torsors-1}  \( \overline{Y(k)} = Y(k_{\Omega})^{\Br_1(Y)}\) $($\cite[Theorem 1]{Harari-torsors-06}$)$, i.e.\ all failures of the Hasse principle or weak approximation for rational points on $Y$ are explained by (algebraic) Brauer--Manin obstructions. 
\item\label{part:HP/WA-torsors-2} If \(Y(k_{\Omega})^{\locconstBr_{Ma}(Y)} \neq \emptyset\), then \(Y(k) \neq \emptyset\) $($\cite{Manin71}$)$, i.e.\ all failures of the Hasse principle for rational points on $Y$ are explained by locally constant Brauer--Manin obstructions. 
\end{enumerate}
\end{theorem}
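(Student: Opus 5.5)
This theorem only collects two known results, so the plan is to reduce each part to the cited source rather than reprove anything deep.

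For part (i), the citation is \cite[Theorem 1]{Harari-torsors-06}. Harari proves that, for $Y$ a torsor under an abelian variety $A$ with $\Sha(A)$ finite, the closure of $Y(k)$ equals the algebraic Brauer--Manin set. I would then make two checks.

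\begin{itemize}
\item \emph{Compatibility of conventions.} Since $Y$ is proper, $Y(k_\Omega)=Y(\A_k)$. Since $Y_{\ol k}\cong A_{\ol k}$, Harari's closure in the product topology (modulo connected components at archimedean places, if his statement uses that quotient) agrees with the formulation here. The archimedean point needs a brief remark: the Brauer--Manin pairing is locally constant, so the orthogonal set is a union of connected components at real places. On the rational-points side, I would use that the closure of $Y(k)$ there is also a union of connected components, which holds because $Y(k)$, when nonempty, is a coset of $A(k)$.
\item \emph{Easy inclusion.} The inclusion $\overline{Y(k)}\subset Y(k_\Omega)^{\Br_1(Y)}$ is immediate from \eqref{eq:rat pts in BM set}.
\end{itemize}

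For the reverse inclusion, the argument behind Harari's theorem runs in three steps. First, one uses the Cassels--Tate type duality between the Selmer/descent data of $A$ and $\Br_1(Y)/\Br_0(Y)$, computed via $\H^1(k,\pic(Y_{\ol k}))$ with $\pic^0(Y_{\ol k})=A^\vee(\ol k)$. Second, one uses the exact sequence $0\to\overline{A(k)}\to A(\A_k)_\bullet\to \H^1(k,A^\vee)^{\sim}$, which is valid when $\Sha$ is finite. Third, one transfers this from $A$ to the torsor.

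For part (ii), the citation is Manin's theorem \cite[Th\'eor\`eme 6]{Manin71}. The argument I would sketch goes as follows.
\begin{itemize}
\item Suppose $Y(k_\Omega)\neq\emptyset$, so the class $[Y]\in \H^1(k,A)$ lies in $\Sha(A)$.
\item The subgroup $\locconstBr_{Ma}(Y)$ maps, via $r$, onto $i_*\Sha(k,\pic^0(Y_{\ol k}))=i_*\Sha(A^\vee)$.
\item Pairing a local point with $\beta\in\locconstBr_{Ma}(Y)$ computes the Cassels--Tate pairing $\langle [Y], r(\beta)\rangle_{CT}$. This value is independent of the chosen local point, which is the sense in which these classes are "locally constant".
\item Hence $Y(k_\Omega)^{\locconstBr_{Ma}(Y)}\neq\emptyset$ forces $[Y]$ to be orthogonal to all of $\Sha(A^\vee)$.
\item Finiteness of $\Sha(A)$ makes the Cassels--Tate pairing nondegenerate, so $[Y]=0$ and $Y(k)\neq\emptyset$.
\end{itemize}

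The main obstacle is the surjectivity of $r$ onto $i_*\Sha(A^\vee)$. The Hochschild--Serre map $\Br_1(Y)\to \H^1(k,\pic(Y_{\ol k}))$ is surjective when $\H^3(k,\G_m)=0$, which holds for number fields, so surjectivity is fine. The identification of the pairing with Cassels--Tate is what I would take from Manin or Skorobogatov \cite{Sko-Torsors} rather than rederive.
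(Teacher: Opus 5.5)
Your overall plan is the paper's own: the paper gives no argument here and simply quotes \cite{Harari-torsors-06} for (i) and Manin for (ii). Your sketch of (ii) is sound. It uses surjectivity of $r$ from $\H^3(k,\G_m)=0$, local constancy of the classes in $\locconstBr_{Ma}(Y)$, and Manin's identification of the pairing with Cassels--Tate. That identification requires first choosing a preimage of $r(\beta)$ in $\Sha(A^\vee)$, which your notation $\langle [Y], r(\beta)\rangle_{CT}$ skips. You also correctly use that the left kernel of the Cassels--Tate pairing is the divisible part of $\Sha(A)$, which is zero when $\Sha(A)$ is finite.

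The step that fails is your archimedean ``compatibility'' check in (i). It is not true that $\overline{Y(k)}$ is a union of connected components at the archimedean places just because $Y(k)$ is a coset of $A(k)$. The closure of a finitely generated subgroup of the compact Lie group $\prod_{v\mid\infty}A(k_v)$ need not be open. For instance, take $Y=A$ with $A(k)$ and $\Sha(A)$ both finite, e.g.\ $A=X_0(11)$ over $\Q$. Then $\overline{Y(k)}=A(k)$ is finite. On the other hand, consider any $\Omega$-local point that equals $0$ at all finite places and lies in the identity component $A(k_v)^0$ at each archimedean place. Evaluation of a Brauer class is constant on connected components, and the global point $0$ pairs trivially by reciprocity, so every such point is orthogonal to all of $\Br(A)$. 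Hence $Y(k_\Omega)^{\Br_1(Y)}$ is uncountable, and the equality in (i) is false in the plain product topology. Positive rank does not rescue it: take $E_1\times E_2$ with $\rk E_2(k)=0$. At complex places there is no Brauer condition at all.

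So no argument can reconcile the two conventions. What Harari proves is the equality in $Y(\A_k)_\bullet$, where each archimedean factor is replaced by $\pi_0(Y(k_v))$. This is also exactly what the sequence $0\to\overline{A(k)}\to A(\A_k)_\bullet\to\H^1(k,A^\vee)^D$, which you yourself invoke, delivers. You should state (i) that way, since the formulation in the statement suppresses it. The weaker version is also all that the later uses (Lemma~\ref{lem:dense}, Theorem~\ref{thm:WAforU}) need. The map $\sym^d(X_{k_v})\to h_0(X_{k_v})/n$ is locally constant, so at archimedean places only the connected component of a local point matters.
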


\section{Generalised Kummer varieties}
\label{section:kummer-descent}

In this section, we recall the definition of generalised Kummer varieties following  \cite{Zhu-GenKummer-25, Sko-Zarhin-Kummer-17}. 
Let $k$ be a number field, let $A$ be an abelian variety of dimension at least two over $k$, and let $\varphi\in\Aut_k(A)$ be a non-trivial automorphism of finite order. 
Then \(\ker[1- \auto] = \{x \in A \mid \auto(x) = x\} \) is a subgroup scheme of \(A\). 
Let $G=\lr{\auto}$ and suppose that for all $g\in G\setminus\{1\}$, the subgroup scheme $\ker[1-g]$ has finitely many points over an algebraic closure of $k$.

Let \(T\) be a $k$-torsor under $\ker[1-\auto]$ representing a class in \(\H^1(k, \ker[1-\auto])\). The image of $T$ under the map \(\H^1(k, \ker[1-\auto]) \to \H^1(k,A)\) is represented by  
\[
Y = (A \times_k T)/\ker[1-\auto],
\]
cf.~\cite[Proposition~3.3.2]{Sko-Torsors}.
The variety \(Y\) is a $k$-torsor under \(A\) and is in particular projective. Moreover, different choices of \(T\) representing the same class in \(\H^1(k, \ker[1 - \auto])\) yield isomorphic choices for \(Y\).
Furthermore, by construction,  \(\auto\) acts trivially on \(\ker[1-\auto]\) and \(T\) and thus descends to an automorphism of \(Y\), which we will also denote by \(\auto\). Consider the geometric quotient \(W \coloneq Y/G\). The quotient map \(f \colon Y \to W\) is \'{e}tale away from \(\Fix_{G}(Y) =\{y\in Y\mid g(y)=y \text{ for some } g\in G\setminus\{1\} \}\), and the singularities of $W$ are contained in the image of \(\Fix_{G}(Y)\). In particular, there are dense open subschemes \(U \subset W\) and \(V \subset Y\) such that the map \(f \colon V \to U\) is an \'{e}tale torsor under $G$.

\begin{definition}\label{defn:Kummer}
A \emph{generalised Kummer variety associated to \((A,\varphi, T)\)} is a desingularisation $X$ of \(W\). 
\end{definition}

The following diagram summarises the construction. 
\begin{center}
\begin{equation}\label{eq:Kummer}    
\begin{tikzcd}
& Y \coloneq (A\times_k T)/\ker[1-\auto] \arrow[dd, "f"]   & V \coloneq Y\setminus \text{Fix}_{G}(Y) \arrow[dd, "f"] \arrow[l, phantom, "\supset", hook'] \\                             \\
X\ar[r] & W\coloneq Y/G                                         &   U \coloneq X\setminus f\left(\Fix_{G}(Y)\right) 
\arrow[l, phantom, "\supset", hook']         
\end{tikzcd}
\end{equation}
\end{center}

\begin{remark}[Choice of desingularisation]
When \(W\) admits a minimal desingularisation (e.g. when \(\auto = [-1]\) or when \(A\) is an abelian surface), this can be taken to define \emph{the} generalised Kummer variety associated to \((A, \auto, T)\). In general, one may have different choices of desingularisation. However, this choice will not make a difference to our results, since our proofs proceed via an auxiliary result for \(U\), which is common to all of these birational models.
\end{remark}

\begin{remark}[Automorphisms of prime order]
When \(\varphi\) has prime order, \(\Fix_{G}(A) = \ker[1 - \auto]\), so \(\Fix_{G}(Y)\) is a $k$-torsor under \(\ker[1 - \auto]\). In this case, a generalised Kummer variety can be constructed by lifting \(\auto\) to the blow up \(\Bl_{\Fix_{G}(Y)} \to Y \), and then taking the quotient by the action of this lift. If \(\auto\) is the involution \([-1] \colon A \to A\), then \(\ker[1- \auto] = A[2]\) and this is classical (see e.g.\ \cite[\S 2]{Sko-Zarhin-Kummer-17}). For general primes, see \cite{Zhu-GenKummer-25}.
\end{remark}

\subsection{Obstruction sets for generalised Kummer varieties}\label{sec:Xobs}

For a smooth, quasi-projective, geometrically integral $k$-variety $Z$, let $\mathcal{B}(Z)$ be one of the subgroups of $\Brnr(Z)$ in~\eqref{eq:subgroups of Brnr}, so $\mathcal{B}\in \{\locconstBr_{Ma},
\locconstBr_{Cr}, \locconstBr_{Sk}, \Brnr\}$.

\begin{definition}\label{defn:f-Brnr_pt}
Let $k$ be a number field, let $X$ be a smooth, quasi-projective, geometrically integral variety over $k$.
Let $G$ be a linear algebraic group over $k$ and let $f: Y\rightarrow X$ be an $X$-torsor under $G$. Define the \emph{$(f,\mathcal{B})$-obstruction set for rational points} to be 
\begin{equation*}
 X(k_\Omega)^{f,\mathcal{B}} \coloneq \bigcup_{\sigma\in \H^1(k,G)} f^\sigma(Y^\sigma(k_\Omega)^{\mathcal{B}}).
\end{equation*}
By~\eqref{eq:rat pts in BM set} and~\eqref{eq:partition}, we have 
\begin{align}
\label{eqn:X-descent-fB}
X(k) = \bigsqcup_{\sigma \in \H^1(k,G)} f^{\sigma}(Y^{\sigma}(k)) \subset  \bigcup_{\sigma \in \H^1(k,G)} f^{\sigma}\left(Y^{\sigma}(k_{\Omega})^{\mathcal{B}}\right) = X(k_{\Omega})^{f, \mathcal{B}}. 
\end{align}
\end{definition}

We now return to the setting of~\eqref{eq:Kummer}. Note that $U$ can be viewed as a dense open subscheme of the generalised Kummer variety $X$, since \(U\) is contained in the smooth locus of \(W\). 
By construction, $f : V \to U$ is an \'etale torsor under $G$ so that, by \eqref{eqn:X-descent-fB},
for any field extension \(L/k\) we have
\[U(L) \subset U(L_{\Omega})^{f, \mathcal{B}} \subset U(L_\Omega).\]
Thus, as in Definition~\ref{defn:refined-obstruction}, we can define the refined obstruction set 
\begin{equation}\label{eq:fBUobs} \widetilde{Z}_{0,\Omega}(U)^{f,\mathcal{B}}\coloneq\refrec_{X, \Omega}\left(\bigoplus_{L/k \ \textrm{finite}}\Z[U(L_{\Omega})^{f,\mathcal{B}}]\right)\subset Z_{0,\Omega}(U),\
\end{equation}
and $\widetilde{Z}^\delta_{0,\Omega}(U)^{f,\mathcal{B}}\coloneq \widetilde{Z}_{0,\Omega}(U)^{f,\mathcal{B}} \cap Z_{0, \Omega}^\delta(U)$. We will use $\widetilde{Z}^\delta_{0,\Omega}(U)^{f,\mathcal{B}}$ to define an obstruction set for the generalised Kummer variety $X$. For this, we will need the following moving lemma.

\begin{lemma}[Moving Lemma {\cite[Special case of Proposition~5.6]{Sch07}}]
\label{lem:movinglemmasuslin}
Let $X$ be a smooth variety over \(k\) and let $\iota: U\hookrightarrow X$ be a dense open subscheme. Then $\iota_*:Z_0(U)\to Z_0(X)$ induces a surjective homomorphism $ h_0(U)\twoheadrightarrow h_0(X)$. 
\end{lemma}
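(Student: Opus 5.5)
The statement is the moving lemma for Suslin homology: for $X$ smooth over $k$ and $\iota\colon U\hookrightarrow X$ dense open, the pushforward $\iota_*$ induces a surjection $h_0(U)\twoheadrightarrow h_0(X)$. There are two things to prove: that $\iota_*$ descends to Suslin homology, and that the induced map is surjective.

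For well-definedness, I use functoriality of finite correspondences. Push forward along $\mathrm{id}_{\A^1}\times\iota$ sends a closed integral $W\subset \A^1\times U$, finite and flat over $\A^1$, to its image in $\A^1\times X$. The key point is that this image is still closed: $W$ is finite over $\A^1$, hence proper over $\A^1$, so its image in the separated scheme $\A^1\times X$ is closed. The image is also finite and flat over $\A^1$. Pullback along $0$ and $1$ commutes with this pushforward, so $\iota_*\circ\Phi_U=\Phi_X\circ\iota_*$, and $\iota_*$ descends to a map $h_0(U)\to h_0(X)$.

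For surjectivity, since $Z_0(X)$ is generated by closed points, it suffices to show that every closed point $x\in X\setminus U$ is congruent modulo $\im(\Phi_X)$ to a 0-cycle supported on $U$. The plan is as follows.

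\begin{enumerate}[(1)]
\item \textbf{Choose a curve.} Pick an integral curve $C\subset X$ passing through $x$ with $C\not\subset X\setminus U$. Since $X$ is smooth, hence regular at $x$, it is geometrically integral and quasi-projective as assumed throughout; a Bertini-type argument in a projective embedding near $x$ provides such a $C$, which we may take regular at $x$. Let $\widetilde C$ be the normalisation of $C$ and $\widetilde C^c$ its smooth compactification. Put $D=\widetilde C^c\setminus\widetilde C$ and let $E$ be the finite set of points of $\widetilde C$ lying over $X\setminus U$; $E$ contains the preimage $\tilde x$ of $x$.

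\item \textbf{Choose a function.} By the Schmid description of $h_0$ recalled above, the relations are divisors $\divisor(g)$ with $g\equiv 1$ on $D$. Using the Chinese remainder theorem (approximation in the semilocal ring of $\widetilde C^c$ at $D\cup E$), choose $g\in k(C)$ with $g\equiv 1$ modulo a high power of the maximal ideal at each point of $D$, $g$ a uniformiser at $\tilde x$, and $g$ a unit at the other points of $E$. Then $\divisor(g)=\tilde x+(\text{terms supported over }U)$.

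\item \textbf{Push forward.} The pushforward of $\divisor(g)$ to $X$ is zero in $h_0(X)$, so $x\equiv$ a cycle supported on $U$.

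\item \textbf{Residue-field degrees.} If $\tilde x$ has residue field larger than that of $x$, compare degrees. Since $C$ is regular at $x$, the preimage $\tilde x$ is unique with the same residue field, so no correction is needed.
\end{enumerate}

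The main obstacle is step (1) over a non-algebraically-closed $k$ with $x$ non-rational: one must produce a curve through a given closed point, regular there, and not contained in $X\setminus U$. I would try hypersurface sections containing $x$ (Altman–Kleiman Bertini over infinite fields, or Poonen's version) cutting $X$ down to dimension one. Failing that, I would cite the general statement, which is exactly the cited special case of Proposition 5.6 of \cite{Sch07}.
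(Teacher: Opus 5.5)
Your argument is correct, but it takes a different route from the paper. The paper gives no proof here; it simply invokes Schmidt's Proposition~5.6. You instead derive the statement directly from the curve description of $h_0(X)$ (Schmidt's Theorem~5.1), which the paper recalls right after defining $h_0$.

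\textbf{Steps you have right.} Your covariance step is fine: an integral $W\subset \A^1\times U$ finite over $\A^1$ is proper over $\A^1$, so $W\to\A^1\times X$ is a proper immersion, hence a closed one. Steps (2)--(4) also go through:
\begin{enumerate}[(a)]
\item $E$ is finite because $C\not\subset X\setminus U$.
\item The approximation theorem for the finitely many places $D\cup E$ of $k(C)$ produces $g$.
\item Regularity of $C$ at $x$ makes the coefficient of $x$ in the pushforward of $\divisor(g)$ exactly $1$.
\end{enumerate}

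\textbf{Your step (1).} The step you flag needs no serious Bertini theorem. First, smoothness does not give quasi-projectivity or geometric integrality, as your sentence suggests. Instead, work in an affine neighbourhood of $x$ and take the closure in $X$ of the curve you find there. Then:
\begin{enumerate}[(a)]
\item Let $d=\dim_x X$ and pick $0\neq h$ in the ideal of $X\setminus U$ at $x$.
\item Choose a tangent vector $v$ at $x$ on which the leading form of $h$ does not vanish. This is possible because $k(x)$ is infinite.
\item Choose global sections $s_1,\dots,s_{d-1}$ of $\mathcal{O}(m)$, $m\gg 0$, vanishing at $x$, whose images in $\mathfrak{m}_x/\mathfrak{m}_x^2$ are linearly independent and cut out the line $k(x)v$. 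This is possible because $\Gamma(\mathcal{O}(m))\to\mathcal{O}(m)\otimes\mathcal{O}/\mathfrak{m}_x^2$ is surjective for $m\gg 0$.
\item Then $\mathcal{O}_{X,x}/(s_1,\dots,s_{d-1})$ is a DVR in which $h\neq 0$. So the unique component of $V(s_1,\dots,s_{d-1})$ through $x$ is an integral curve, regular at $x$ and not contained in $X\setminus U$.
\end{enumerate}

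\textbf{Comparison.} Your route gives a self-contained proof using only what the paper already recalls. It also shows that smoothness is used only to produce, through each point of $X\setminus U$, a curve that is regular there. The citation is shorter and covers Schmidt's more general setting.
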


\begin{corollary} \label{cor:samedeg} 
Let $X$ be a smooth variety over \(k\) and let $\iota: U\hookrightarrow X$ be a dense open subscheme. 
Then \[Z_0^{\delta}(U)= \emptyset \iff Z_0^{\delta}(X)=\emptyset.\]
\end{corollary}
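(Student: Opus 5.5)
The forward implication is the trivial one. If $z \in Z_0^{\delta}(U)$, then $\iota_*(z) \in Z_0(X)$ has the same support, viewed as closed points of $X$. An open immersion identifies closed points of $U$ with closed points of $X$ lying in $U$ and preserves residue fields, so $\deg \iota_*(z) = \deg z = \delta$. Hence $Z_0^{\delta}(X) \neq \emptyset$ whenever $Z_0^{\delta}(U) \neq \emptyset$; contrapositively, $Z_0^{\delta}(X) = \emptyset$ implies $Z_0^{\delta}(U) = \emptyset$.

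For the converse, I will use the Moving Lemma (Lemma~\ref{lem:movinglemmasuslin}). Suppose $z \in Z_0^{\delta}(X)$. Surjectivity of $h_0(U) \to h_0(X)$ gives some $z' \in Z_0(U)$ such that $\iota_*(z')$ and $z$ have the same class in $h_0(X)$. It then suffices to check that the degree map $Z_0(X) \to \Z$ factors through $h_0(X)$, i.e.\ that it vanishes on $\im(\Phi)$. Granting this, $\deg z' = \deg \iota_*(z') = \deg z = \delta$, so $Z_0^{\delta}(U) \neq \emptyset$.

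The only step requiring an argument is the degree-invariance, and I expect it to be routine. I would use the description of $\Phi$ directly. Take an integral $W \subset \A^1 \times_k X$ that is finite and flat over $\A^1$, say of rank $r$. The fibres $W_0$ and $W_1$ are then finite $k$-schemes of length $r$, and their associated 0-cycles both have degree $r$ on $X$. Here I use that flatness makes the fibre length constant, and that pushing forward to $\spec k$ computes degree as $k$-length. Therefore $\deg(W_0 - W_1) = 0$, and by linearity $\deg$ vanishes on $\im(\Phi)$.

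Alternatively, one can use the description via $\divisor(g)$ for rational functions $g$ on curves $C \subset X$ with $g \equiv 1$ on $\widetilde{C}^c \setminus \widetilde{C}$. In that description, $\divisor(g)$ on $\widetilde{C}$ has degree equal to minus the degree of the divisor at the boundary points. That boundary contribution vanishes because $g$ is a unit equal to $1$ there. Since the divisor of $g$ on the proper curve $\widetilde{C}^c$ has total degree zero, its pushforward to $X$ has degree zero as well.
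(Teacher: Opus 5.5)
Your proof is correct and follows the paper's argument. One direction is the trivial inclusion $Z_0^{\delta}(U)\subset Z_0^{\delta}(X)$; the other combines the Moving Lemma with the fact that $\deg$ factors through $h_0(X)$. The only difference is that you justify why $\deg$ vanishes on $\im(\Phi)$, using constant fibre length for finite flat $W\to\A^1$, whereas the paper simply asserts it.
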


\begin{proof}
One direction is immediate, since $Z_0^{\delta}(U) \subset Z_0^{\delta}(X)$. For the other direction, let $z \in Z_0^{\delta}(X)$. By the Moving Lemma~\ref{lem:movinglemmasuslin}, there exists $u\in Z_0(U)$ such that $\iota_*[u]=[z]\in h_0(X)$. Since the degree map $\deg : Z_0(X) \to \Z$ factors through $h_0(X)$ (as any 0-cycle in the kernel of the natural projection $Z_0(X)\twoheadrightarrow h_0(X)$ has degree zero), we have $\deg(u)=\deg(z)=\delta$.
\end{proof}

\begin{lemma}\label{lem:globalinobsX}
In the setting of~\eqref{eq:Kummer},
we have
\[\ol{Z_0^\delta(X)}\subset \ol{Z_0^\delta(U)}\subset\overline{\widetilde{Z}^\delta_{0,\Omega}(U)^{f,\mathcal{B}}},\] where the closure is taken with respect to the weak approximation topology on $Z_{0,\Omega}(X)$ (see Definition \ref{def:weakapproxtop}).
\end{lemma}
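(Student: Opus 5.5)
The plan is to prove the two inclusions separately. The first, $\ol{Z_0^\delta(X)}\subset \ol{Z_0^\delta(U)}$, will come from the Moving Lemma~\ref{lem:movinglemmasuslin} applied locally. The second, $\ol{Z_0^\delta(U)}\subset\overline{\widetilde{Z}^\delta_{0,\Omega}(U)^{f,\mathcal{B}}}$, should follow from an inclusion of sets before taking closures, via Lemma~\ref{lem:globalinobs}. Throughout, $Z_{0,\Omega}(U)$ is viewed inside $Z_{0,\Omega}(X)$ via pushforward along $\iota$, and closures are taken in $Z_{0,\Omega}(X)$.

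\textbf{Second inclusion.} For each finite $L/k$ we have $U(L)\subset U(L_\Omega)^{f,\mathcal{B}}\subset U(L_\Omega)$ by \eqref{eqn:X-descent-fB}, applied to the torsor $f\colon V\to U$ base changed to $L$. So $\{U(L_\Omega)^{f,\mathcal{B}}\}_{L/k}$ is a collection of obstruction sets for $U$ in the sense of Definition~\ref{defn:refined-obstruction}. Lemma~\ref{lem:globalinobs}, applied to $U$, then gives $Z_0(U)\subset \widetilde{Z}_{0,\Omega}(U)^{f,\mathcal{B}}$. The recombining maps preserve degree, so intersecting with degree $\delta$ yields $Z_0^\delta(U)\subset \widetilde{Z}^\delta_{0,\Omega}(U)^{f,\mathcal{B}}$. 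Taking closures in $Z_{0,\Omega}(X)$ gives the inclusion.

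\textbf{First inclusion.} It suffices to show $Z_0^\delta(X)\subset \ol{Z_0^\delta(U)}$, since closure is idempotent. Fix $z\in Z_0^\delta(X)$, an integer $n>0$ and a finite set $S\subset\Omega$. We need $u\in Z_0^\delta(U)$ with $z-u\in\mathcal{A}_{n,S}$, i.e.\ $z$ and $u$ have the same image in $h_0(X_{k_v})/n$ for all $v\in S$. By Lemma~\ref{lem:movinglemmasuslin} over $k$, there is $u\in Z_0(U)$ with $\iota_*[u]=[z]$ in $h_0(X)$. As in Corollary~\ref{cor:samedeg}, $\deg u=\delta$. It then remains to check that equality in $h_0(X)$ implies equality in $h_0(X_{k_v})$. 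For this I would use that $Z_0(X)\to Z_0(X_{k_v})$ (pullback of closed points) carries $\im(\Phi)$ into $\im(\Phi_{k_v})$. This holds because base change $\Cor(\A^1,X)\to\Cor(\A^1_{k_v},X_{k_v})$ commutes with specialisation at $0$ and $1$, so there is a natural map $h_0(X)\to h_0(X_{k_v})$ compatible with $\iota_*$. Then $z-\iota_*u\mapsto 0$ in $h_0(X_{k_v})$, hence also in $h_0(X_{k_v})/n$, for every $v$.

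The main point needing care is this functoriality of $h_0$ under base field extension. An alternative route avoids it entirely: work place by place, applying Lemma~\ref{lem:movinglemmasuslin} to $U_{k_v}\subset X_{k_v}$. However, that only gives local moves, not a global $u$, so I would use the base change argument. A further subtlety is that the weak approximation topology on $Z_{0,\Omega}(X)$ involves $h_0(X_{k_v})$ rather than $h_0(U_{k_v})$. This is exactly what the argument above needs, because all closures are taken in $Z_{0,\Omega}(X)$.
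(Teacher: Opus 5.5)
Your proposal is correct and follows the paper's proof. The second inclusion comes from Lemma~\ref{lem:globalinobs} applied to $U$, and the first from the Moving Lemma over $k$ together with the degree map factoring through $h_0(X)$, exactly as in Corollary~\ref{cor:samedeg}. The one difference is that you make explicit the compatibility of $h_0$ with base change to $k_v$, which is needed to pass from $\iota_*[u]=[z]$ in $h_0(X)$ to equality in $h_0(X_{k_v})/n$; the paper leaves this step implicit.
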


\begin{proof}
Lemma~\ref{lem:globalinobs} gives \(Z_0^\delta(U)\subset \widetilde{Z}^\delta_{0,\Omega}(U)^{f,\mathcal{B}}\) and therefore
\[\ol{Z_0^\delta(U)}\subset \ol{\widetilde{Z}^\delta_{0,\Omega}(U)^{f,\mathcal{B}}}.
\]
Thus, it suffices to show that \(Z_0^\delta(X)\subset \ol{Z_0^\delta(U)}.\) 
Let $z\in Z_0^\delta(X)$. By the proof of Corollary \ref{cor:samedeg}, there exists $u\in Z_0(U)$ such that $\iota_*[u]=[z]\in h_0(X)$ and $\deg(u)=\deg(z)=\delta$.
\end{proof}

\begin{remark}
Lemma \ref{lem:globalinobsX} shows that \[Z_0^\delta(X) \subset \overline{\widetilde{Z}^\delta_{0,\Omega}(U)^{f,\mathcal{B}}} \subset Z_{0,\Omega}^\delta(X),\] which means we can consider $\overline{\widetilde{Z}^\delta_{0,\Omega}(U)^{f,\mathcal{B}}}$ as an obstruction set for local-global principles for 0-cycles of degree $\delta$ on $X$.
\end{remark}

\section{Local-global principles for 0-cycles on generalised Kummer varieties}\label{sec:weak-approx}

In this section, we prove Theorems \ref{thm:main intro-weak-approx} and \ref{thm:main intro-hasse-principle}. Throughout the section, we retain the notation and assumptions of Section \ref{section:kummer-descent}: let \(A\) be an abelian variety of dimension at least two over a number field \(k\). Let $\varphi\in\Aut_k(A)$ be a non-trivial automorphism of finite order such that each non-trivial element of \(\lr{\auto}\) acts with finitely many fixed points on \(A(\ol{k})\), and write $G=\lr{\auto}$. 
Let $T$ be a $k$-torsor under $\ker[1 - \varphi]$ and let $X$ be a generalised Kummer variety associated to $(A, \varphi, T)$, with $Y$ and $f:V\to U$ as in~\eqref{eq:Kummer}.
Furthermore, throughout the section we assume that for every finite extension \(L/k\) and every \(\sigma \in \H^1(L, G)\), the Tate--Shafarevich group \(\Sha(A_L^\sigma)\) is finite.

For $\mathcal{B}\in \{\locconstBr_{Ma},
\locconstBr_{Cr}, \locconstBr_{Sk}, \Brnr\}$, let \(\RefinedObsDeg{U}{\delta}^{f, \mathcal{B}}\) be the refined obstruction set defined in~\eqref{eq:fBUobs}.  We first prove the following lemma, showing the existence of rational points on twists of base changes of $Y$ as a result of the non-emptiness of this obstruction set.

\begin{lemma}
\label{lemma:from-0-cycle-to-adelic-points}
Let $\delta \in \Z$, let $\mathcal{B}\in \{\locconstBr_{Ma},
\locconstBr_{Cr}, \locconstBr_{Sk}, \Brnr\}$ and suppose that \(\RefinedObsDeg{U}{\delta}^{f, \mathcal{B}} \neq \emptyset \). Then there exist finite extensions \(L_1, \ldots, L_r\) of \(k\) and \(\sigma_i \in \H^1(L_i, G) \) such that \(\gcd_{1 \le i \le r}([L_i:k])\) divides \(\delta\) and \(V_{L_i}^{\sigma_{i}}((L_{i})_{\Omega})^{\mathcal{B}} \neq~\emptyset\) for \(1\leq i\leq r\). Consequently, $Y_{L_i}^{\sigma_i}(L_i)\neq\emptyset$ for \(1\leq i\leq r\).
\end{lemma}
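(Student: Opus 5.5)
Take a nonzero element $z=(z_v)_v\in\RefinedObsDeg{U}{\delta}^{f,\mathcal{B}}$ and unwind the definition \eqref{eq:fBUobs}. There is a finite set $\mathscr{F}$ of finite extensions $L/k$ and, for each $L\in\mathscr{F}$, finitely many local points $P\in U(L_\Omega)^{f,\mathcal{B}}$ with nonzero integer coefficients $n_P$, such that
\[
z=\sum_{L\in\mathscr{F}}\sum_P n_P\,N^\Omega_{L/k}(P).
\]
Discard any terms that cancel, so every $n_P\neq 0$. Fix one place $v$. Each $P=(x_w)_w$ contributes $\sum_{w\mid v}N_{L_w/k_v}(x_w)$ to $z_v$. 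This is a 0-cycle on $X_{k_v}$ of degree $\sum_{w\mid v}[L_w:k_v]=[L:k]$. Comparing degrees gives
\[
\delta=\deg z_v=\sum_{L}\sum_P n_P[L:k],
\]
so $\gcd_{L\in\mathscr{F}}[L:k]$ divides $\delta$. Enumerate the pairs $(L,P)$ as $(L_1,P_1),\dots,(L_r,P_r)$, repeating fields as needed; the gcd is unchanged.

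By the definition of $U(L_{i,\Omega})^{f,\mathcal{B}}$ (Definition~\ref{defn:f-Brnr_pt}, applied over $L_i$ to the torsor $f_{L_i}\colon V_{L_i}\to U_{L_i}$), there is $\sigma_i\in\H^1(L_i,G)$ with $P_i\in f^{\sigma_i}\bigl(V^{\sigma_i}_{L_i}((L_i)_\Omega)^{\mathcal{B}}\bigr)$. In particular $V^{\sigma_i}_{L_i}((L_i)_\Omega)^{\mathcal{B}}\neq\emptyset$. There is a technicality here: $G$ is a constant finite group acting on $Y$, so the twists $V^{\sigma_i}_{L_i}\subset Y^{\sigma_i}_{L_i}$ make sense as $L_i$-varieties.

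For the consequence, note that $V^{\sigma_i}_{L_i}$ is a dense open subset of $Y^{\sigma_i}_{L_i}$. The twist $Y^{\sigma_i}_{L_i}$ is a smooth projective torsor under $A^{\sigma_i}_{L_i}$, the twist of $A_{L_i}$ by $\sigma_i$ via $G\subset\Aut(A)$; the translation action twists compatibly. By birational invariance of $\Brnr$ we have $\Brnr(V^{\sigma_i}_{L_i})=\Br(Y^{\sigma_i}_{L_i})$. The subgroups $\mathcal{B}$ should also match: the $\locconstBr$ variants are defined through $\Br_1$, $\Pic$ and local conditions, which are insensitive to removing the closed subset, since $\pic^0$ and the relevant $\Br_1$ classes are unramified.

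Hence a point of $V^{\sigma_i}_{L_i}((L_i)_\Omega)^{\mathcal{B}}$ gives a point of $Y^{\sigma_i}_{L_i}((L_i)_\Omega)^{\mathcal{B}(Y)}$. Since $\locconstBr_{Ma}\subset\mathcal{B}$ by \eqref{eq:subgroups of Brnr}, that point lies in $Y^{\sigma_i}_{L_i}((L_i)_\Omega)^{\locconstBr_{Ma}}$. Now apply Theorem~\ref{thm:HP/WA-torsors}\ref{part:HP/WA-torsors-2}, using the finiteness of $\Sha(A^{\sigma_i}_{L_i})$, to get $Y^{\sigma_i}_{L_i}(L_i)\neq\emptyset$.

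The main obstacle is this comparison of $\mathcal{B}(V^{\sigma_i}_{L_i})$ with $\mathcal{B}(Y^{\sigma_i}_{L_i})$, specifically checking that orthogonality to $\locconstBr_{Ma}(V)$ implies orthogonality to $\locconstBr_{Ma}(Y)$. I would handle it via functoriality: restriction gives a map $\locconstBr_{Ma}(Y)\to\locconstBr_{Ma}(V)$, and the Brauer--Manin pairing with $Y$-classes is compatible along the inclusion $V\subset Y$. So the local point, viewed on $Y$, is orthogonal to all of $\locconstBr_{Ma}(Y)$, which is all we need.
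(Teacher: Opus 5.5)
Your proposal is correct and follows the paper's proof essentially step for step. You unwind the definition of the refined set to write $\delta$ as an integer combination of the degrees $[L_i:k]$, extract $\sigma_i\in\H^1(L_i,G)$ with $V^{\sigma_i}_{L_i}((L_i)_\Omega)^{\mathcal{B}}\neq\emptyset$, and then use the pullback $j^*\colon \mathcal{B}(Y^{\sigma_i}_{L_i})\to\mathcal{B}(V^{\sigma_i}_{L_i})$ to put the local point in $Y^{\sigma_i}_{L_i}((L_i)_\Omega)^{\locconstBr_{Ma}}$ before invoking Manin's theorem. Your hedged claim that $\mathcal{B}(V^{\sigma_i}_{L_i})$ and $\mathcal{B}(Y^{\sigma_i}_{L_i})$ coincide is not needed: the functoriality argument you end with is exactly what the paper uses, and it suffices on its own.
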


\begin{proof}
Let \(z \in \RefinedObsDeg{U}{\delta}^{f, \mathcal{B}}\). By Definitions~\ref{defn:refined-obstruction} and~\ref{defn:f-Brnr_pt}, there exist finite collections of: 
\begin{itemize}
    \item field extensions $L_1, L_2, \ldots ,L_r$,
    \item classes $\sigma_{i,1},\ldots, \sigma_{i,m_i}\in \H^1(L_i,G)$ for $1\leq i\leq r$, and
    \item  $\Omega_{L_i}$-local points $y_{i,j}\in V_{L_i}^{\sigma_{i,j}}((L_{i})_{\Omega})^{\mathcal{B}}$
\end{itemize}
such that $z$ is a $\Z$-linear combination of the $N_{L_i/k}^\Omega \bigl( f_{L_i}^{\sigma_{i,j}}(y_{i,j})\bigr)$, where $f_{L_i}^{\sigma_{i,j}}(y_{i,j})\in U_{L_i}((L_i)_{\Omega})$ is viewed as an element of $ Z_{0,\Omega}^1(U_{L_i})$. By~\eqref{eq:adelic cores}, $N_{L_i/k}^\Omega \bigl( f_{L_i}^{\sigma_{i,j}}(y_{i,j})\bigr)$ has degree $[L_i:k]$. Thus, $\delta=\deg(z)$ is a $\Z$-linear combination of the degrees $[L_i:k]$, which implies that \(\gcd_{1 \le i \le r}([L_i:k])\) divides \(\delta\). For each \(i\), we now choose one $\sigma_{i,j}$ and denote it by \(\sigma_i\), so we have \(V_{L_i}^{\sigma_{i}}((L_{i})_{\Omega})^{\mathcal{B}} \neq \emptyset\) for \(1\leq i\leq r\). 
    
Now let $j:V_{L_i}^{\sigma_i}\hookrightarrow Y_{L_i}^{\sigma_i}$ denote the natural inclusion. 
Then the pullback gives a natural map $j^*: \mathcal{B}(Y_{L_i}^{\sigma_i})\to \mathcal{B}(V_{L_i}^{\sigma_i})$. Thus, for $x\in V_{L_i}^{\sigma_{i}}((L_{i})_{\Omega})^{\mathcal{B}}$ and $\alpha\in \mathcal{B}(Y_{L_i}^{\sigma_i})$, we have 
\[\lr{ j(x), \alpha}_{BM}=\lr{ x, j^*(\alpha)}_{BM}=0.\]
Therefore, $j$ induces a natural inclusion
\[\emptyset\neq V_{L_i}^{\sigma_{i}}((L_{i})_{\Omega})^{\mathcal{B}}\subset Y_{L_i}^{\sigma_{i}}((L_{i})_{\Omega})^{\mathcal{B}}. \]

Recall that the action of $G$ on $Y$ is induced by the action on $A$. Thus, $Y_{L_i}^{\sigma_i}$ is an $L_i$-torsor under $A_{L_i}^{\sigma_i}$. Now, by~\eqref{eq: russian B sets} and Theorem~\ref{thm:HP/WA-torsors}\ref{part:HP/WA-torsors-2}, we have $Y_{L_i}^{\sigma_i}(L_i)\neq\emptyset$, as required.
\end{proof}

\subsection{Weak approximation}\label{sec:WAkummer}

We begin by proving an analogue of Theorem~\ref{thm:main intro-weak-approx} for the open subset $U$ defined in~\eqref{eq:Kummer}.

\begin{theorem}\label{thm:WAforU}
Let $A, U, G, f$ be as in \eqref{eq:Kummer}. Assume that for every finite extension \(L/k\) and every \(\sigma \in \H^1(L, G)\), the Tate--Shafarevich group \(\Sha(A_L^\sigma)\) is finite. Let $\delta\in\Z$. 
Then $Z_0^\delta(U)$ is dense in $\widetilde{Z}^\delta_{0,\Omega}(U)^{f,\Br_{\mathrm{nr}}}$ with respect to the weak approximation topology.
\end{theorem}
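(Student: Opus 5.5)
The plan is to apply Proposition~\ref{prop:propofrefobs} to the collection $\mathscr{X}^{(L)} = U(L_\Omega)^{f,\Brnr}$ for $L/k$ finite. That proposition immediately gives $\widetilde{Z}^\delta_{0,\Omega}(U)^{f,\Brnr}\subset \ol{Z_0^\delta(U)}$. Together with the reverse inclusion $Z_0^\delta(U)\subset \widetilde{Z}^\delta_{0,\Omega}(U)^{f,\Brnr}$ from Lemma~\ref{lem:globalinobs}, this is exactly the density claim. So the whole task reduces to a statement about rational points: for every finite $L/k$, the set $U(L)$ is dense in $U(L_\Omega)^{f,\Brnr}$ for the product topology. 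Replacing $k$ by $L$, whose hypotheses are inherited, it suffices to treat $L=k$.

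Fix $u=(u_v)_v\in U(k_\Omega)^{f,\Brnr}$, so $u=f^\sigma(y)$ for some $\sigma\in \H^1(k,G)$ and $y\in V^\sigma(k_\Omega)^{\Brnr}$. Let $\mathcal{N}\ni u$ be a product open set in $U(k_\Omega)$, involving only a finite set $S$ of places. Since $f^\sigma$ is continuous, $(f^\sigma)^{-1}(\mathcal{N})\cap V^\sigma(k_\Omega)$ is an open neighbourhood of $y$ in $V^\sigma(k_\Omega)$, and $V^\sigma(k_\Omega)$ is open in $Y^\sigma(k_\Omega)=Y^\sigma(\A_k)$ because $V^\sigma\subset Y^\sigma$ is open and $Y^\sigma$ is proper. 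The variety $Y^\sigma$ is smooth and projective with $Y^\sigma$ birational to $V^\sigma$, so $\Brnr(V^\sigma)=\Br(Y^\sigma)$ compatibly with the inclusion $j:V^\sigma\hookrightarrow Y^\sigma$. Hence $j(y)\in Y^\sigma(k_\Omega)^{\Br(Y^\sigma)}\subset Y^\sigma(k_\Omega)^{\Br_1(Y^\sigma)}$.

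Next I use the torsor structure. As in the proof of Lemma~\ref{lemma:from-0-cycle-to-adelic-points}, $Y^\sigma$ is a $k$-torsor under $A^\sigma$, and $\Sha(A^\sigma)$ is finite by hypothesis. Theorem~\ref{thm:HP/WA-torsors}\ref{part:HP/WA-torsors-1} then gives $\ol{Y^\sigma(k)}=Y^\sigma(k_\Omega)^{\Br_1(Y^\sigma)}\ni j(y)$. So there is a rational point $y'\in Y^\sigma(k)$ inside the open set $(f^\sigma)^{-1}(\mathcal{N})\cap V^\sigma(k_\Omega)$, which is a neighbourhood of $y$ in $Y^\sigma(k_\Omega)$. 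In particular $y'\in V^\sigma(k)$, and therefore $f^\sigma(y')\in U(k)\cap\mathcal{N}$. This proves the required density of $U(k)$ in $U(k_\Omega)^{f,\Brnr}$.

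The step needing most care is the passage from the unramified Brauer--Manin condition on the open $V^\sigma$ to the full Brauer--Manin condition on $Y^\sigma$, since Theorem~\ref{thm:HP/WA-torsors} concerns $Y^\sigma$. I will justify it by the birational invariance of $\Brnr$, using that $Y^\sigma$ is itself a smooth compactification of $V^\sigma$. A second point to check is that "sufficiently close" for $L$-points, as used in the proof of Proposition~\ref{prop:propofrefobs}, is implied by product-topology density of $U(L)$ in $\mathscr{X}^{(L)}$; this holds because closeness is only required at the finitely many places above $S$. Finally, $\Omega$-local points of $U$ need not be adelic, but this causes no trouble: we work in $U(k_\Omega)$ with the product topology throughout, and $Y^\sigma$ is proper.
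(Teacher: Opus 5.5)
Your proof is correct and is essentially the paper's argument. The paper likewise proves density of $V^\sigma_L(L)$ in $V^\sigma_L(L_\Omega)^{\Brnr}$ (Lemma~\ref{lem:dense}, via $V^\sigma_L(L_\Omega)^{\Brnr}\subset Y^\sigma_L(L_\Omega)^{\Br}$ and Theorem~\ref{thm:HP/WA-torsors}\ref{part:HP/WA-torsors-1}), pushes this forward along the continuous map $f^\sigma_L$, and passes to 0-cycles by approximating each $\Omega_L$-local point at the places above $S$; that last step is the argument of Proposition~\ref{prop:propofrefobs}, which the paper writes out inline rather than citing.

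One intermediate claim is false, though easy to repair. $V^\sigma(k_\Omega)$ is not open in $Y^\sigma(k_\Omega)$ for the product topology, because $V^\sigma(k_v)\neq Y^\sigma(k_v)$ for infinitely many $v$: the finite non-empty set $\Fix_G(Y^\sigma)$ has $k_v$-points for infinitely many $v$. To fix this, write $\mathcal{N}=\prod_{v\in S}\mathcal{N}_v\times\prod_{v\notin S}U(k_v)$ with $S\neq\emptyset$. Then approximate $j(y)$ inside the genuinely open set $\prod_{v\in S}(f^\sigma)^{-1}(\mathcal{N}_v)\times\prod_{v\notin S}Y^\sigma(k_v)$. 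Finally, a point of $Y^\sigma(k)$ whose component at a single place lies in $V^\sigma(k_v)$ already lies in $V^\sigma(k)$.
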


To prove this theorem, we use the following result on the density of rational points on twists of base changes of $V$.
\begin{lemma}
\label{lem:dense}
Let $L/k$ be a finite extension and let $\sigma\in \H^1(L,G)$.
Then $V_L^\sigma(L)$ is dense in $V_L^{\sigma}(L_{\Omega})^{\Brnr}$. 
In particular, if $V_L^{\sigma}(L_{\Omega})^{\Brnr} \neq \emptyset$ then $V_L^\sigma(L)\neq\emptyset$ and consequently $U(L)\neq \emptyset$.
\end{lemma}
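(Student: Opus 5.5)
We deduce the density from Theorem~\ref{thm:HP/WA-torsors}\ref{part:HP/WA-torsors-1}, applied to the smooth compactification $Y_L^\sigma$ of $V_L^\sigma$. Recall that $Y_L^\sigma$ is an $L$-torsor under the abelian variety $A_L^\sigma$. Here $\Sha(A_L^\sigma)$ is finite by the standing assumption of this section. Since $Y_L^\sigma$ is smooth and proper and $V_L^\sigma\subset Y_L^\sigma$ is a dense open subscheme, we have $\Brnr(V_L^\sigma)=\Br(Y_L^\sigma)$. More precisely, restriction along $j\colon V_L^\sigma\hookrightarrow Y_L^\sigma$ identifies the two groups, and for $x\in V_L^\sigma(L_\Omega)$ and $\alpha\in\Br(Y_L^\sigma)$ we have $\lr{j(x),\alpha}_{BM}=\lr{x,j^*\alpha}_{BM}$. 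Hence
\[
V_L^\sigma(L_\Omega)^{\Brnr}=V_L^\sigma(L_\Omega)\cap Y_L^\sigma(L_\Omega)^{\Br(Y_L^\sigma)}.
\]

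Next, by Theorem~\ref{thm:HP/WA-torsors}\ref{part:HP/WA-torsors-1}, $\ol{Y_L^\sigma(L)}=Y_L^\sigma(L_\Omega)^{\Br_1}$, which contains $Y_L^\sigma(L_\Omega)^{\Br}$. Conversely, $Y_L^\sigma(L)\subset Y_L^\sigma(L_\Omega)^{\Br}$, and the latter set is closed. Therefore $Y_L^\sigma(L)$ is dense in $Y_L^\sigma(L_\Omega)^{\Br}$, and $Y_L^\sigma(L_\Omega)^{\Br}=Y_L^\sigma(L_\Omega)^{\Br_1}$. Note that $X(L_\Omega)$ carries the product topology, so an open neighbourhood only constrains finitely many places.

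Now let $x\in V_L^\sigma(L_\Omega)^{\Brnr}$, and fix a basic open neighbourhood $\prod_w N_w$ of $x$ in $V_L^\sigma(L_\Omega)$. Since $V_L^\sigma(L_w)$ is open in $Y_L^\sigma(L_w)$, each $N_w$ is also open in $Y_L^\sigma(L_w)$, with $N_w=V_L^\sigma(L_w)$ for almost all $w$. The main obstacle is that the approximating rational point must lie in $V_L^\sigma$, not merely in $Y_L^\sigma$, at every place, including the infinitely many unconstrained ones.

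To get around this, enlarge the neighbourhood to $N'=\prod_w N'_w$, where $N'_w=N_w$ for the finitely many constrained places and $N'_w=Y_L^\sigma(L_w)$ otherwise. This $N'$ is an open neighbourhood of $j(x)$ in $Y_L^\sigma(L_\Omega)$, and $j(x)$ lies in $Y_L^\sigma(L_\Omega)^{\Br}$, so density gives $y\in Y_L^\sigma(L)\cap N'$. At any constrained place $w$ we have $y\in N_w\subset V_L^\sigma(L_w)$. Since $V_L^\sigma$ is an open subscheme, a global point lying in $V$ at a single place already lies in $V_L^\sigma(L)$. (If there are no constrained places, pick one place and take $N_w=V_L^\sigma(L_w)$ there.) Thus $y\in V_L^\sigma(L)$ lies in the original neighbourhood, which proves density.

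Finally, if $V_L^\sigma(L_\Omega)^{\Brnr}\neq\emptyset$, density gives $V_L^\sigma(L)\neq\emptyset$. Applying $f^\sigma_L$ to such a point then gives a point of $U(L)$.
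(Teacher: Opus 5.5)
Your proof is correct and follows the same route as the paper. You embed $V_L^\sigma(L_\Omega)^{\Brnr}$ into $Y_L^\sigma(L_\Omega)^{\Br}$ using $\Brnr(V_L^\sigma)=\Br(Y_L^\sigma)$, invoke Theorem~\ref{thm:HP/WA-torsors}\ref{part:HP/WA-torsors-1} for the density of $Y_L^\sigma(L)$, and use that $V_L^\sigma$ is open in $Y_L^\sigma$ to force the approximating point into $V_L^\sigma(L)$; your handling of the product-topology neighbourhood (enlarging it to all of $Y_L^\sigma(L_w)$ at unconstrained places, and noting that a global point lying in $V_L^\sigma$ at one place lies in $V_L^\sigma(L)$) simply spells out the paper's ``sufficiently close'' step.
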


\begin{proof}
Since the first statement follows automatically if \(V_L^{\sigma}(L_{\Omega})^{\Brnr}\) is empty, we may assume it is non-empty.
We have $\emptyset\neq V_L^{\sigma}(L_{\Omega})^{\Brnr} \subset Y_L^{\sigma}(L_{\Omega})^{\Br}$.
By Theorem \ref{thm:HP/WA-torsors}\ref{part:HP/WA-torsors-1}, $Y_L^{\sigma}(L)$ is dense in $Y_L^{\sigma}(L_{\Omega})^{\Br}$. Since $V_L^\sigma$ is open in $Y_L^\sigma$, a point in $Y_L^{\sigma}(L)$ that is sufficiently close to a point in $V_L^{\sigma}(L_{\Omega})^{\Brnr}$ will lie in $V_L^\sigma(L)$. 
The non-emptiness of $V_L^\sigma(L)$ now follows from $V_L^{\sigma}(L_{\Omega})^{\Brnr}\neq\emptyset$, and the map $f^\sigma_L:V_L^\sigma(L)\to U_L(L)$ gives $U(L)\neq\emptyset$.
\end{proof}

\begin{proof}[Proof of Theorem~\ref{thm:WAforU}]
Fix $n\in\Z_{>0}$ and a finite set of places $S\subset\Omega_k$. Let $z=(z_v)_v\in \RefinedObsDeg{U}{\delta}^{f, \Br_{\mathrm{nr}}}$. We seek $u\in Z_0^\delta(U)$ such that $u$ and $z_v$ have the same image in $h^0(U_{k_v})/n$ for all $v\in S$. The proof of Lemma~\ref{lemma:from-0-cycle-to-adelic-points} shows that $z$ is a $\Z$-linear combination of 0-cycles of the form \[N_{L_i/k}^\Omega 
\bigl( f_{L_i}^{\sigma_{i,j}}(y_{i,j})\bigr)\in Z_{0,\Omega}^{[L_i:k]}(U).\] Therefore, given a finite extension $L/k$ of degree $d$, a class $\sigma\in \H^1(L, G)$, and an $\Omega_L$-local point $(y_w)_w\in V_L^\sigma(L_\Omega)^{\Brnr}$,
it suffices to show the existence of $x\in Z_0^d(U)$ such that $x$ and $\sum_{w\mid v}N_{L_{w}/k_{v}}
(f_{L}^{\sigma}(y_{w}))$ have the same image in $h^0(U_{k_{v}})/n$ for all $v\in S$. By Proposition~\ref{prop:BB-Wittenberg-rephrasing-WA}, it is enough to show the existence of $x\in U(L)$ such that the localisations $x_w\in U(L_w)$
are sufficiently close to the points $f_{L}^{\sigma}(y_{w})$ 
for all $w\mid v$ for all $v\in S$. We can then view $x$ as a 0-cycle of degree $d$ on $U$, whose image in $Z_0^d(U_{k_v})$ is $\sum_{w\mid v}x_w$. 
The existence of $x$ follows from Lemma~\ref{lem:dense}: the continuity of $f_L^\sigma: V_L^\sigma\to U_L$ implies that $f^\sigma_L(V_L^\sigma(L))$ is dense in $f^\sigma_L(V_L^{\sigma}(L_{\Omega})^{\Brnr})$.  
\end{proof}

\begin{proof}[Proof of Theorem \ref{thm:main intro-weak-approx}]
Lemma~\ref{lem:globalinobsX} gives $\ol{Z_0^\delta(X)}\subset \ol{Z_0^\delta(U)} \subset
\overline{\widetilde{Z}^\delta_{0,\Omega}(U)^{f,\Brnr}}$.
We now prove the reverse inclusion $\overline{\widetilde{Z}^\delta_{0,\Omega}(U)^{f,\Brnr}}\subset \ol{Z_0^\delta(X)}$. Let $(z_v)_v\in \ol{\widetilde{Z}^\delta_{0,\Omega}(U)^{f,\Brnr}}$. 
Fix $n\in\Z_{>0}$ and a finite set $S\subset\Omega_k$. Then there exists $(u_v)_v \in \widetilde{Z}^\delta_{0,\Omega}(U)^{f,\Brnr}$ such that $\iota_*[u_v]=[z_v]\in h_0(X_{k_v})/n$ for all $v\in  S$. By Theorem~\ref{thm:WAforU}, there exists \(\globcycu \in Z_0^{\delta}(U)\) such that \(\globcycu\) and $u_v$ have the same image in $h_0(U_{k_v})/n$ for all $v \in S$. Take $\globcyc=\iota_*(\globcycu)\in Z^\delta_0(X)$. Then \(\globcyc\) and \(z_v\) have the same image in \(h_0(X_{k_v})/n\) for all \(v \in S\).
\end{proof}

\subsection{Hasse principle}
\label{subsec:hasse}

We continue to retain the notation and assumptions of Section \ref{section:kummer-descent}. 
In particular, we assume that $\Sha(A_L^\sigma)$ is finite for all finite extensions $L/k$ and all $\sigma \in \H^1(L, G)$.
From Lemma \ref{lem:globalinobsX} and \eqref{eq:subgroups of Brnr}
we have the following chain of inclusions:
\[
\overline{Z^\delta_{0}(X)}\subset \overline{\widetilde{Z}^\delta_{0,\Omega}(U)^{f,\Brnr}}\subset \overline{\widetilde{Z}^\delta_{0,\Omega}(U)^{f,\mathcal{B}}}\subset Z^\delta_{0,\Omega}(X).
\]
where $\mathcal{B}\in\{\locconstBr_{Ma},
\locconstBr_{Cr}, \locconstBr_{Sk}, \Brnr\}$. 
Theorem~\ref{thm:main intro-weak-approx} implies that $\overline{\widetilde{Z}^\delta_{0,\Omega}(U)^{f,\Brnr}}\neq \emptyset$ if and only if $Z^\delta_{0}(X)\neq \emptyset$. In this section we prove the stronger result that any
failure of the Hasse principle for 0-cycles of degree $\delta$ on $X$ is explained by emptiness of $\overline{\widetilde{Z}^\delta_{0,\Omega}(U)^{f,\mathcal{B}}}$, for \emph{any}  $\mathcal{B}\in\{\locconstBr_{Ma}, \locconstBr_{Cr}, \locconstBr_{Sk}, \Brnr\}$.
Our proof relies on Theorem~\ref{thm:intro-main-rank-jumps}, which is proved in Section~\ref{sec:rankjumps}.

\begin{theorem}
\label{thm:hasseprinciplebody}
Let $A, X, U, G, f$ be as in \eqref{eq:Kummer}. Assume that for every finite extension \(L/k\) and every \(\sigma \in \H^1(L, G)\), the Tate--Shafarevich group \(\Sha(A_L^\sigma)\) is finite. Let $\delta \in \Z$. Then 
\[Z_0^\delta(X)=\emptyset \iff Z_0^\delta(U)=\emptyset \iff \widetilde{Z}^\delta_{0,\Omega}(U)^{f,\mathcal{B}}=\emptyset\iff \overline{\widetilde{Z}^\delta_{0,\Omega}(U)^{f,\mathcal{B}}}=\emptyset.\] 
where $\mathcal{B}\in \{ \locconstBr_{Ma},\locconstBr_{Sk},\locconstBr_{Cr},\Brnr\}$.
\end{theorem}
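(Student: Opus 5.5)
The first equivalence is Corollary~\ref{cor:samedeg}. The last equivalence is formal: a set is empty if and only if its closure is empty. The forward implications are also easy. If $Z_0^\delta(U)\neq\emptyset$, then Lemma~\ref{lem:globalinobs} gives $Z_0^\delta(U)\subset\widetilde{Z}^\delta_{0,\Omega}(U)^{f,\mathcal{B}}$, so the obstruction set is non-empty. The substance is therefore the implication
\[\widetilde{Z}^\delta_{0,\Omega}(U)^{f,\mathcal{B}}\neq\emptyset\implies Z_0^\delta(U)\neq\emptyset.\]

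Assume the obstruction set is non-empty. By Lemma~\ref{lemma:from-0-cycle-to-adelic-points} there are finite extensions $L_1,\dots,L_r$ of $k$ and classes $\sigma_i\in\H^1(L_i,G)$ with $\gcd_i[L_i:k]\mid\delta$ and $Y^{\sigma_i}_{L_i}(L_i)\neq\emptyset$. The difficulty is that a rational point of $Y^{\sigma_i}_{L_i}$ may lie in the fixed locus $\Fix_G$, and so need not give a point of $V^{\sigma_i}_{L_i}$. With only the locally constant classes $\mathcal{B}$ we do not have density, unlike in Lemma~\ref{lem:dense}. We handle this with degree-controlled extensions.

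Fix $i$. Since $Y^{\sigma_i}_{L_i}$ has an $L_i$-point, it is isomorphic to the abelian variety $A^{\sigma_i}_{L_i}$. Under this isomorphism the fixed locus $F_i:=\Fix_G(Y^{\sigma_i}_{L_i})$ becomes a finite subscheme, by the finiteness hypothesis on the fixed points of non-trivial powers of $\auto$. Set $n:=\delta\cdot\prod_j[L_j:k]$, or any positive integer divisible by all the $[L_j:k]$ (replace $\delta$ by $1$ in this product if $\delta=0$). Apply Theorem~\ref{thm:intro-main-rank-jumps} to $A^{\sigma_i}_{L_i}$ over $L_i$. It gives a prime $\ell_i$, larger than $n$ and larger than the degrees of all residue fields of points of $F_i$, together with an extension $M_i/L_i$ of degree $\ell_i$ such that $\rk A^{\sigma_i}(M_i)>\rk A^{\sigma_i}(L_i)$. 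In particular $A^{\sigma_i}(M_i)$ is infinite. Since $F_i$ is finite, there is a point in $Y^{\sigma_i}(M_i)\setminus F_i(M_i)=V^{\sigma_i}_{L_i}(M_i)$. Pushing it forward along $f^{\sigma_i}$ gives a point of $U(M_i)$, hence a closed point of $U$ whose degree divides $[M_i:k]=\ell_i[L_i:k]$. Norming down, we obtain an effective 0-cycle on $U$ of degree exactly $\ell_i[L_i:k]$.

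We must make sure the gcd condition survives. Choose the primes $\ell_i$ to be pairwise distinct and coprime to $n$, in particular larger than $n$. Then
\[\gcd_i\bigl(\ell_i[L_i:k]\bigr)\quad\text{and}\quad\gcd_i[L_i:k]=:g\]
differ only by primes $\ell_i$. If $r=1$, apply the construction twice with two distinct primes $\ell,\ell'$; this yields cycles of degrees $\ell[L_1:k]$ and $\ell'[L_1:k]$, whose gcd is $[L_1:k]$. In general, using two distinct primes for each $i$ removes all the $\ell$'s from the gcd, so the gcd of the degrees obtained is exactly $g$. Since $g\mid\delta$, a $\Z$-linear combination of these 0-cycles has degree $\delta$, which gives $Z_0^\delta(U)\neq\emptyset$.

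The main obstacle is producing an $M_i$-point of the twisted torsor outside the finite fixed locus while keeping control of $[M_i:L_i]$. This is exactly where Theorem~\ref{thm:intro-main-rank-jumps} is used. A secondary check is that the fixed locus is finite on each twist $Y^{\sigma_i}_{L_i}$; this holds because twisting by $\sigma_i$ commutes with the $G$-action, so the fixed locus is a twist of the finite scheme $\Fix_G(Y)$.
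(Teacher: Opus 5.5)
Your proof is correct and follows essentially the same route as the paper: Lemma~\ref{lemma:from-0-cycle-to-adelic-points} gives $L_i$-points on the twists $Y^{\sigma_i}_{L_i}$, then Theorem~\ref{thm:intro-main-rank-jumps} is applied in prime-degree extensions coprime to the relevant degrees to escape the finite fixed locus (this is the paper's Lemma~\ref{lem:L'}), and a gcd argument finishes. The only difference is bookkeeping: the paper enlarges the $L_j$ one at a time by degrees coprime to $\gcd_{i\neq j}[L_i:k]$, whereas you choose distinct large primes, two per field, which also correctly handles the case $r=1$, where the paper's convention that the empty gcd is $1$ does not actually preserve the gcd.
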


Recall that $Y$ is a $k$-torsor under $A$. It follows that for $\sigma \in \H^1(L,G)$, the twist $Y^\sigma_L$ is an $L$-torsor under $A^\sigma_L$. Consequently, the Albanese variety of $Y^\sigma_L$ is $A^\sigma_L$; this is the dual of the Picard variety $\pic^0(Y^\sigma_L)$. Recall that an abelian variety is isogenous to its dual, and finiteness of Tate--Shafarevich groups is isogeny invariant.
Therefore, under the assumptions of Theorem~\ref{thm:hasseprinciplebody}, 
the groups $\locconstBr(Y^\sigma_L)/\Br_0(Y^\sigma_L)$ are finite by Remark~\ref{remark:locconstBr finite}, where $\locconstBr \in \{ \locconstBr_{Ma},\locconstBr_{Sk},\locconstBr_{Cr}\}$.

\begin{lemma}
\label{lem:L'}
Let $n\in\Z_{>0}$, let $L$ be a finite extension of $k$ and let $\sigma\in \H^1(L,G)$ be such that $Y_L^\sigma(L)\neq \emptyset$. Then there exists $L'/L$ finite with $[L':L]$ coprime to $n$ such that $V_L^\sigma(L')\neq\emptyset$ and therefore $U(L')\neq\emptyset$.
\end{lemma}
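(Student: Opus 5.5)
Fix a point $P \in Y_L^\sigma(L)$. This gives an isomorphism of $L$-varieties $A_L^\sigma \xrightarrow{\sim} Y_L^\sigma$, $a \mapsto a + P$. Under this isomorphism, the closed subset $Z \coloneq Y_L^\sigma \setminus V_L^\sigma = \Fix_G(Y_L^\sigma)$ pulls back to a closed subset of $A_L^\sigma$. This subset is finite: by hypothesis each non-trivial $g \in G$ has finitely many fixed points on $A(\ol{k})$, and twisting and passing to the torsor $Y$ does not change this geometrically, since over $\ol{k}$ the variety $Y$ is isomorphic to $A$ with $\auto$ acting as an affine map whose linear part is $\auto$. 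Call this finite set $F \subset A_L^\sigma$. It suffices to find $L'/L$ of degree coprime to $n$ with $A_L^\sigma(L') \setminus F(L') \neq \emptyset$. Indeed, a point $a$ in this set gives $a + P \in V_L^\sigma(L')$, and then $f_{L'}^\sigma(a+P) \in U(L')$.

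To find $L'$, I will apply Theorem~\ref{thm:intro-main-rank-jumps} to the abelian variety $B \coloneq A_L^\sigma$ over the number field $L$. It supplies $N$ such that for every prime $\ell \ge N$ there are infinitely many extensions $L'/L$ with $[L':L] = \ell$ and $\rk B(L') > \rk B(L)$. I choose a prime $\ell \ge N$ with $\ell \nmid n$, which is possible since there are infinitely many primes. Take any such $L'$. Then $B(L')$ has positive rank, so it is infinite, while $F(L')$ is finite because $F$ is a finite scheme. Hence $B(L') \setminus F(L') \neq \emptyset$, and $[L':L] = \ell$ is coprime to $n$.

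Alternatively, in the easy case where $B(L)$ is already infinite, one can take $L' = L$ with degree $1$. The rank-jump theorem is needed only when $B(L)$ is finite, which can happen and is exactly where the difficulty lies.

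The main obstacle is really Theorem~\ref{thm:intro-main-rank-jumps} itself, which we are allowed to assume. The remaining point to check carefully is the finiteness of $Y_L^\sigma \setminus V_L^\sigma$, i.e.\ that $\Fix_G$ of the twisted torsor is a finite subscheme. I would argue this after base change to $\ol{L}$. There the twist becomes trivial, $Y_{\ol{L}} \cong A_{\ol{L}}$, and each $g \neq 1$ acts as $x \mapsto g(x) + c_g$. Its fixed points form a translate of $\ker[1-g]$ whenever the set is nonempty, and $\ker[1-g]$ is finite by assumption.
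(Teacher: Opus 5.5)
Your proof is correct and follows the paper's argument. You trivialise the torsor using a rational point, note that the fixed locus is finite, and apply Theorem~\ref{thm:intro-main-rank-jumps} to $A_L^\sigma$ with a prime $\ell$ not dividing $n$ (the paper takes $\ell>n$), so that the infinite group $A_L^\sigma(L')$ cannot lie inside the finitely many fixed points. Your observation that, after trivialising, each $g\neq 1$ acts by an affine map whose fixed locus is a translate of $\ker[1-g]$ justifies this step more carefully than the paper, which simply identifies $\Fix_G(Y_L^\sigma)$ with $\Fix_G(A_L^\sigma)$.
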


\begin{proof}
Since $Y_L^\sigma(L)\neq\emptyset$, the $L$-torsor $Y^\sigma_L$ is isomorphic to the underlying abelian variety $A_L^\sigma$. Choosing such an isomorphism, we identify $Y^\sigma_L$ with $A_L^\sigma$, and $\Fix_{G}(Y^\sigma_L)$ with $\Fix_{G}(A_L^\sigma)$, which is finite by assumption (see Section~\ref{section:kummer-descent}). By Theorem~\ref{thm:intro-main-rank-jumps} applied to $A_L^\sigma$, there exists $L'/L$ of degree $\ell$ for some prime $\ell>n$ such that $\rk A_L^\sigma(L')>\rk A_L^\sigma(L)$. In particular, there exists $P\in A_L^\sigma(L')\setminus \Fix_{G}(A_L^\sigma)$. This $P$ corresponds to a point in $Y_L^\sigma(L')\setminus \Fix_{G}(Y_L^\sigma)=V^\sigma_L(L')$. Applying $f_L^\sigma: V_L^{\sigma}\to U_L$ to this point completes the proof.
\end{proof}

\begin{proof}[Proof of Theorem~\ref{thm:hasseprinciplebody}]
Lemma~\ref{lem:globalinobsX} gives $Z_0^\delta(X)\subset \overline{Z_0^\delta(U)}\subset \ol{\widetilde{Z}^\delta_{0,\Omega}(U)^{f,\mathcal{B}}}$, which implies all three reverse implications. Henceforth, assume that $\overline{\widetilde{Z}^\delta_{0,\Omega}(U)^{f,\mathcal{B}}}\neq\emptyset$. By Lemma~\ref{lemma:from-0-cycle-to-adelic-points}, there exist finite extensions \(L_1, \ldots, L_r\) of \(k\) and \(\sigma_i \in \H^1(L_i, G) \) such that \(\gcd_{1 \le i \le r}([L_i:k])\) divides \(\delta\) and $Y_{L_i}^{\sigma_i}(L_i)\neq\emptyset$ for \(1\leq i\leq r\). For each $j\in \{1,\dots, r\}$, let $n_j=\gcd_{\substack{i\in\{1,\dots, r\}\setminus\{j\}}}([L_i:k])$. Note that if $r=1$ then, by convention, we take the gcd over an empty indexing set to be $1$. Apply Lemma~\ref{lem:L'} with $n=n_1$ to replace $L_1$ by a finite extension $L'_1$ such that $U(L_1')\neq\emptyset$ and 
\[\gcd\{[L_1':k], [L_2:k], \dots, [L_r:k]\}=\gcd_{1 \le i \le r}([L_i:k]).\]
Repeating this procedure for each \(j\), we obtain finite extensions $L_1',\dots, L_r'$ of $k$ such that $U(L_i')\neq\emptyset$ for $1\leq i\leq r$ and $\gcd_{1 \le i \le r}([L_i':k])=\gcd_{1 \le i \le r}([L_i:k])$. Since $\gcd_{1 \le i \le r}([L_i:k])$ divides $\delta$, the $L_i'$-points of $U$ can be used to construct a 0-cycle in $Z_0^\delta(U)\subset Z_0^\delta(X)$.
\end{proof}

\section{Rank jumps for abelian varieties}\label{sec:rankjumps}
In this section, we prove Theorem~\ref{thm:intro-main-rank-jumps}, restated below.

\rankjump*

In the case where $A$ is an elliptic curve, this statement is proven in \cite{daveacta}. The strategy underlying the proof of Theorem~\ref{thm:intro-main-rank-jumps} is to find a curve \(C \subset A\) containing the identity \(O_A\) for the group law on \(A\) that also
contains infinitely many degree \(d\) points defining infinitely many distinct degree $d$ extensions of $k$. The existence of a curve satisfying the former condition is guaranteed by a strong version of Bertini's theorem \cite{Diaz1991StrongBertini}.
To prove the latter condition in the case of a curve of genus at least two, we apply \cite[Proposition 3.3.1]{VirayVogt2024} and utilise it for the rank jump statement using Lemma~\ref{lemma:dave}, which appears in~\cite[Lemma 4.2.1]{Dave}.

\subsection{Bertini Theorems} 

Recall that a linear system $V$ on a nonsingular projective variety $X$ is a subspace of the space of global sections $\Gamma(X,\mathscr{L})$, where $\mathscr{L}$ is an invertible sheaf on $X$. By \cite[II, Proposition 7.7]{hartshorne}, members of a linear system $V$ correspond to effective divisors by taking their vanishing loci: we will make this identification and abuse language accordingly.
The points on $X$ on which all $s\in V$ vanish are called the base points of $V$.
The base locus is the scheme-theoretic intersection of the vanishing loci of all the $s\in V$. It forms a closed subscheme of $X$.

A useful tool for us will be a strong version of  Bertini's theorem, which asserts that a generic divisor in a linear system $V$ on $X$ inherits nice properties of $X$. We recall the classical version first.

\begin{theorem}[Bertini]
\label{thm:bertini}
\cite[Corollaire 6.11]{Jouanolou}
Let $k$ be a field of characteristic zero, and let $X$ be a smooth, geometrically integral $k$-variety. Let $\mathscr{L}$ be an invertible sheaf, and $V \subset \Gamma(X, \mathscr{L})$ a base-point-free linear system. 
Then a generic member of the linear system $V$ is smooth, geometrically reduced, and of dimension $\dim X-1$. 
\end{theorem}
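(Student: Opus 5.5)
The plan is to reduce to an incidence correspondence and apply generic smoothness in characteristic zero. First, since $V$ is base-point-free, it defines a morphism $\phi \colon X \to \P(V^\vee) \cong \P^N$ with $N = \dim V - 1$. Members of $V$ correspond to hyperplanes $H$ in the dual space $\check{\P}^N = \P(V)$. The divisor attached to $s \in V$ is $\phi^{-1}(H_s)$, with its scheme structure as a pullback. So the statement becomes: for $H$ in a dense open subset of $\check{\P}^N$, the fibre $\phi^{-1}(H)$ is smooth, geometrically reduced, and of dimension $\dim X - 1$ (or empty).

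Consider the incidence variety
\[
I \coloneq \{(x,H) \in X \times \check{\P}^N : \phi(x) \in H\},
\]
with its two projections $p \colon I \to X$ and $q \colon I \to \check{\P}^N$. Because $\phi$ is defined everywhere, the fibre of $p$ over $x$ is the hyperplane $\{H : H \ni \phi(x)\} \cong \P^{N-1}$ in $\check{\P}^N$. Hence $p$ is a Zariski-locally trivial $\P^{N-1}$-bundle; concretely, $I$ is the projectivisation of the kernel of the surjection $V \otimes \mathcal{O}_X \to \mathscr{L}$. Since $X$ is smooth and geometrically integral, $I$ is therefore smooth and geometrically integral of dimension $\dim X + N - 1$. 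The fibre of $q$ over $H$ is exactly $\phi^{-1}(H)$.

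Apply generic smoothness to $q \colon I \to \check{\P}^N$; this is where characteristic zero is used. It produces a dense open $W \subset \check{\P}^N$ over which $q$ is smooth, or $q$ is not dominant, in which case a generic fibre is empty and there is nothing to prove. The open $W$ is defined over $k$, since smoothness of a morphism can be checked after base change to $\ol{k}$ and the smooth locus descends. For $H \in W$, the fibre $\phi^{-1}(H)$ is smooth over $k(H)$, hence geometrically reduced.

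For the dimension, the divisor $\phi^{-1}(H)$ is the vanishing locus of a single nonzero section $s$ of an invertible sheaf on the integral scheme $X$. By Krull's Hauptidealsatz, every irreducible component of a nonempty such divisor has dimension exactly $\dim X - 1$. The section $s$ is nonzero because $V$ is base-point-free, so no member of $V$ is the zero section unless $V = 0$.

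I expect the main obstacle to be bookkeeping rather than a genuine difficulty. One point is to make sure ``generic'' is taken over $k$: the open $W$ comes from a $k$-morphism, and nonemptiness of $W(k)$ holds because $k$ is infinite. Another is the dimension count when $q$ is dominant, where one can alternatively compare $\dim I - N = \dim X - 1$ for the generic fibre. If needed, I would cite generic smoothness in the form of Hartshorne III.10.7 after base change to $\ol{k}$ and then descend the resulting open set.
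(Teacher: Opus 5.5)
Your proof is correct and is essentially the standard argument: the paper gives no proof of its own but cites Jouanolou's Corollaire 6.11, which rests on the same universal hyperplane section $X\times_{\P^N}\mathcal{H}\to\check{\P}^N$ (a $\P^{N-1}$-bundle over $X$) combined with generic smoothness in characteristic zero, spread out to a dense open of $\check{\P}^N$ defined over $k$. The only slips are cosmetic: $s\neq 0$ holds simply because members of $V$ are points of $\P(V)$, not because $V$ is base-point-free; and the dimension claim must be read for nonempty members, since e.g.\ for $\dim V=1$ the unique member is empty, as you correctly flag.
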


Bertini's theorem can be generalised to assert smoothness away from the base locus of the linear system.

\begin{theorem}[Bertini with base locus]
\label{thm:bertinibaselocus}
Let $k$ be a field of characteristic zero, and let $X$ be a smooth, geometrically integral $k$-variety. Let $\mathscr{L}$ be an invertible sheaf, and $V \subset \Gamma(X, \mathscr{L})$ a linear system with base locus $B$. Then a generic member of the linear system $V$ is smooth away from $B$ and of dimension $\dim X-1$.
\end{theorem}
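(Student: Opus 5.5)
We deduce Theorem~\ref{thm:bertinibaselocus} from the classical Bertini Theorem~\ref{thm:bertini} by removing the base locus and working on the open complement. Write $X^\circ \coloneq X \setminus B$. It is an open subscheme of $X$, so it is again smooth over $k$. If $X^\circ$ is non-empty, it is geometrically integral: it is a dense open subset of the geometrically integral variety $X$. (If $B = X$, then $V=0$ and the statement is vacuous; we set this case aside.)

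\textbf{Step 1 (restriction).} Restrict the linear system to $X^\circ$, obtaining $V|_{X^\circ} \subset \Gamma(X^\circ, \mathscr{L}|_{X^\circ})$. By the definition of the base locus as the common vanishing scheme of the sections in $V$, the restricted system has no base points on $X^\circ$. The sections $s\in V$ are in bijection with their restrictions to $X^\circ$, because restriction to a dense open subset of an integral scheme is injective on sections of a line bundle. Hence the parameter spaces $\P(V)$ and $\P(V|_{X^\circ})$ are identified. For each $s$, the member $D_s \cap X^\circ$ of the restricted system is exactly the open subscheme $D_s \setminus B$ of $D_s$.

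\textbf{Step 2 (apply Bertini).} Theorem~\ref{thm:bertini} applies to the base-point-free system $V|_{X^\circ}$ on the smooth geometrically integral variety $X^\circ$. It gives a dense open subset $\mathcal{U}\subset \P(V)$ such that, for every $s\in\mathcal{U}$, the scheme $D_s\cap X^\circ$ is smooth of dimension $\dim X - 1$. This is precisely the statement that $D_s$ is smooth away from $B$.

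\textbf{Step 3 (dimension).} It remains to check that $D_s$ has dimension $\dim X-1$ globally, not only on $X^\circ$. Since $s\neq 0$ and $X$ is integral, $D_s$ is an effective Cartier divisor, so every irreducible component of $D_s$ has dimension exactly $\dim X-1$ by Krull's Hauptidealsatz. In particular, components contained in $B$ cause no dimension jump. Consequently $\dim D_s = \dim X - 1$ for every non-zero $s$, and a fortiori for generic $s$.

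The only delicate point is the convention for ``generic''. Over a non-closed field $k$, we interpret it as membership in a dense open subset of $\P(V)$, as in the cited statement from \cite{Jouanolou}. Steps 1--2 transport that open set unchanged, because the identification of parameter spaces is canonical. The remaining bookkeeping, including showing that $B$ coincides set-theoretically with the base points, is routine.
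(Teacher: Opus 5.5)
Your proposal is correct and is essentially the paper's proof: remove the base locus, note that the restricted system on $X\setminus B$ is base-point-free, and apply the classical Bertini theorem there. Your Step~3 via Krull's Hauptidealsatz makes explicit the one-line passage back to members of $V$ that the paper leaves implicit. Strictly speaking, Krull only gives $\dim D_s=\dim X-1$ when $D_s$ is non-empty, so the claim for every non-zero $s$ is too strong, but non-emptiness for generic $s$ already follows from Step~2.
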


\begin{proof}
If $B \subset X$ is the base locus for \(V\), we can apply Theorem \ref{thm:bertini} to $X \setminus B$ and the linear system given by the image of \(V\) under the restriction map \(\Gamma(X, \mathscr{L}) \to \Gamma(X \setminus B, \mathscr{L}\vert_{X \setminus B})\).
Then a generic member of this restriction is smooth and of dimension $\dim X - 1$. This implies that a generic member of $V$ is also of dimension $\dim X - 1$ and smooth outside of $B$.
\end{proof}

The following result of D\'iaz and Harbater also guarantees smoothness at the base locus, provided the dimension of the base locus is not too large.

\begin{theorem}[{Strong Bertini, \cite[Corollary 2.4]{Diaz1991StrongBertini}}]
\label{thm:strongbertini}
Let $k$ be a field of characteristic zero. Let $V$ be a linear system on a smooth, projective $k$-variety $X$. Let $B$ be the base locus of $V$. 
If $B$ is reduced and nonsingular, and if the dimension of a component of $B$ is less than $\frac{1}{2}\dim X$, then a generic member of the linear system is nonsingular along this component. 
\end{theorem}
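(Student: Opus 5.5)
The plan is an incidence-variety dimension count carried out over $\ol{k}$. Smoothness of a member of $V$ along a given closed subset can be checked after base change to $\ol{k}$. A proper closed subset of $\P(V)_{\ol{k}}$ is contained in a proper closed subset defined over $k$: take its orbit under $\Gal(\ol{k}/k)$, which is finite because the subset is defined over some finite extension, and use that $k$ is infinite. So we may assume $k$ is algebraically closed.

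Fix a component $Z$ of $B$ with $b \coloneq \dim Z < \tfrac{1}{2}n$, where $n=\dim X$. Since $B$ is nonsingular, the components of $B$ are disjoint, so $Z$ is open and closed in $B$. Consider the incidence set
\[ I \coloneq \{(s,x)\in \P(V)\times Z \;:\; \text{the divisor } (s) \text{ is singular at } x\}. \]
It is closed: after trivialising $\mathscr{L}$ near $x$, the condition is $s(x)=0$ and $ds(x)=0$ in $\mathfrak{m}_x/\mathfrak{m}_x^2$. The first condition is automatic on $Z\subset B$, and the second is linear in $s$. The goal is to show $\dim I<\dim \P(V)$. Then the projection $I\to\P(V)$ is not dominant, and a generic member of $V$ is nonsingular at every point of $Z$.

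The key step is computing the fibre of $I\to Z$ over a point $x\in Z$. By definition of the scheme-theoretic base locus, the sections in $V$ generate $\mathcal{I}_B\otimes\mathscr{L}$ near $x$. Every $s\in V$ lies in $\mathcal{I}_B$, so its differential $ds(x)$ lies in the image of the conormal space $\mathcal{I}_{B,x}/\mathcal{I}_{B,x}^2\otimes k(x)$ in $\mathfrak{m}_x/\mathfrak{m}_x^2$. Because $B$ is reduced and nonsingular of dimension $b$ at $x$ and $X$ is smooth there, this conormal map is injective with image of dimension $n-b$. Since the $s\in V$ generate $\mathcal{I}_B$ at $x$, the linear map $V\to \mathcal{I}_{B,x}/\mathfrak{m}_x\mathcal{I}_{B,x}$, $s\mapsto ds(x)$, is surjective onto this $(n-b)$-dimensional space. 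Hence the condition ``$(s)$ singular at $x$'' imposes exactly $n-b$ independent linear conditions on $V$. The fibre $I_x$ is therefore a linear subspace of $\P(V)$ of codimension $n-b$, or is empty if $n-b>\dim\P(V)$.

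Counting dimensions, $\dim I\le b+\dim\P(V)-(n-b)=\dim\P(V)-(n-2b)<\dim\P(V)$, since $2b<n$. Hence the closure of the image of $I$ in $\P(V)$ is a proper closed subset, and every member outside it is nonsingular along $Z$. (Combining this with Theorem~\ref{thm:bertinibaselocus} then handles the locus away from $B$ as well, if one wants global smoothness.)

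I expect the main obstacle to be the rigorous justification of the fibre computation. Two points need care: identifying generation of $\mathcal{I}_B$ by $V$ with surjectivity onto the conormal fibre, which is a Nakayama-type argument; and using reducedness and nonsingularity of $B$ to ensure the conormal space injects into $\mathfrak{m}_x/\mathfrak{m}_x^2$ with dimension exactly $n-b$. If $B$ were non-reduced, the differentials of generators of $\mathcal{I}_B$ could span less than $n-b$ dimensions, and the count would fail.
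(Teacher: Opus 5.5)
Your argument is correct. The paper itself gives no proof of this statement. It is quoted from D\'iaz--Harbater and then used only when the base locus is a single reduced point. So your incidence count is a self-contained substitute for the citation, not a variant of an argument in the text.

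The step you flag as the main obstacle is routine. After trivialising $\mathscr{L}$ near $x$, the sections in $V$ generate $\mathcal{I}_{B,x}\subset\mathfrak{m}_x$. Hence the image of $V$ in $\mathfrak{m}_x/\mathfrak{m}_x^2$ is $(\mathcal{I}_{B,x}+\mathfrak{m}_x^2)/\mathfrak{m}_x^2$. Its dimension is $n-\dim T_xB$, because $\mathfrak{m}_x/(\mathcal{I}_{B,x}+\mathfrak{m}_x^2)$ is the cotangent space of $B$ at $x$. No Nakayama argument is needed.

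The same description makes the closedness of $I$ clean. It is the projectivised kernel of the constant-rank bundle map $V\otimes\mathcal{O}_Z\to(\mathcal{I}_B/\mathcal{I}_B^2)|_Z\otimes\mathscr{L}$. Saying only that the condition is linear in $s$ for each fixed $x$ does not by itself give joint closedness in $\P(V)\times Z$.

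This description also isolates where the hypotheses enter. Reducedness and nonsingularity of $B$ are used only to get $\dim T_xB=b$ along $Z$. You can run your count on each locally closed stratum $B_e=\{x\in B:\dim T_xB=e\}$. For a generic member $H$ this gives
\[
\dim(\operatorname{Sing}(H)\cap B_e)\le \dim B_e+e-n\le 2e-n .
\]
So $H$ is nonsingular at every point where the base scheme has Zariski tangent space of dimension $<\tfrac{1}{2}\dim X$, even if $B$ is singular or non-reduced. The corollary quoted in the paper is the specialisation of this to reduced nonsingular $B$, and your proof is exactly that special case.
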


For our result, we will not only need the smoothness of a generic hyperplane section, but also geometric integrality.

\begin{lemma}\label{lem:bertiniwithpoint}
Let $k$ be a field of characteristic zero, and let $X$ be a smooth, projective, geometrically integral $k$-variety such that \(\dim(X) \ge 2\). Let $\mathscr{L}$ be an ample invertible sheaf, and $V \subset \Gamma(X, \mathscr{L})$ a linear system with base locus $B$. Suppose $B$ is reduced and nonsingular, and $\dim B < \frac{1}{2}\dim X$. Then a generic member of the linear system $V$ is smooth, projective, geometrically integral and of dimension $\dim X - 1$.
\end{lemma}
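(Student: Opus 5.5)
A generic member $D\in V$ has to be checked for four properties: dimension, smoothness, projectivity and geometric integrality. I would take them in that order.

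\emph{Dimension, smoothness, projectivity.} By Theorem~\ref{thm:bertinibaselocus}, a generic member $D$ has dimension $\dim X-1$ and is smooth away from $B$. By Theorem~\ref{thm:strongbertini}, applied to each of the finitely many components of $B$ (each has dimension $<\frac12\dim X$, and $B$ is reduced and nonsingular), a generic member is also nonsingular along $B$. The intersection of finitely many dense open conditions on the parameter space $\P(V)$ is still dense open. So a generic $D$ is smooth everywhere and of pure dimension $\dim X-1$. Projectivity is automatic, since $D$ is a closed subscheme of the projective variety $X$.

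\emph{Geometric integrality.} All the conditions above are geometric, so I would base change to $\ol{k}$; a generic member over $k$ is then generic over $\ol{k}$ as well. A smooth scheme is normal, so each connected component of $D_{\ol{k}}$ is integral. Hence it suffices to show that $D_{\ol{k}}$ is connected. Here I use that $D$ is an effective divisor whose line bundle $\mathscr{L}$ is ample, and that $\dim X\ge 2$. Since $X$ is smooth, hence normal, and projective and geometrically integral, the standard connectedness result applies (for instance Hartshorne III, Corollary 7.9, or the Fulton--Hansen / Grothendieck connectedness theorem): every effective ample divisor on a normal projective variety of dimension $\ge 2$ is connected. Then $D_{\ol{k}}$ is connected and smooth, hence irreducible and reduced. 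So $D$ is geometrically integral.

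The main obstacle is the connectedness step. Hartshorne's Corollary III.7.9 is stated for very ample divisors. For an ample divisor I would first replace $\mathscr{L}$ by a power, or argue directly. If $D_{\ol{k}}=D_1\sqcup D_2$ with both pieces nonempty, then $D_1$ and $D_2$ are disjoint effective divisors. But on a normal projective variety of dimension $\ge2$, an ample divisor meets every curve positively. Alternatively, $D_1\cdot D_2\cdot H^{\dim X-2}>0$ whenever $D_1+D_2$ is ample, by the Hodge index theorem applied on a surface section. Either way this contradicts disjointness. The other point needing care is that ``generic'' here means lying in a dense open subset of $\P(V)$. The openness of the smooth locus in a family and the intersection of the finitely many open sets coming from Theorems~\ref{thm:bertinibaselocus} and~\ref{thm:strongbertini} keep this set nonempty.
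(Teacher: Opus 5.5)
Your proposal is correct and follows the paper's proof. You get smoothness and dimension from Theorems~\ref{thm:bertinibaselocus} and~\ref{thm:strongbertini}, geometric connectedness from the Enriques--Severi--Zariski connectedness result (the paper applies it directly to the ample $\mathscr{L}$, and passing to a multiple $mD$, which has the same support, also settles your very-ample worry), and geometric integrality from smooth plus geometrically connected.

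Of your other fallbacks, the Hodge index argument on a surface section works. Positivity of the ample $D$ on curves does not by itself contradict $D_{\ol{k}} = D_1 \sqcup D_2$, since neither $D_i$ need be ample.
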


\begin{proof}
By Theorems~\ref{thm:bertinibaselocus} and~\ref{thm:strongbertini}, we know a generic member of $V$ is smooth and of dimension $\dim X - 1$. Since it is smooth, it is also geometrically reduced \cite[\href{https://stacks.math.columbia.edu/tag/056T}{Tag 056T}]{stacks-project}. Furthermore, since \(\dim(X) \ge 2\) and $\mathscr{L}$ is ample, such a member is geometrically connected by the Lemma of Enriques--Severi--Zariski \cite[III, Corollary 7.9]{hartshorne}. Since it is smooth and geometrically connected, we conclude that it is geometrically integral  \cite[\href{https://stacks.math.columbia.edu/tag/0BUG}{Tag 0BUG}]{stacks-project}.
\end{proof}

\subsection{Proof of Theorem \ref{thm:intro-main-rank-jumps}}
In this section, all algebraic field extensions of $k$ are taken inside a fixed algebraic closure $\ol{k}$ of $k$. The following key lemma is due to  Mendes da Costa. We include its proof for the reader's convenience. 

\begin{lemma}
\label{lemma:dave}
\cite[Lemma 4.2.1]{Dave}
Let $A$ be an abelian variety defined over a number field $k$, let $\ell\in\Z$ be a prime and suppose there are infinitely many degree $\ell$ extensions $L/k$ with $A(k)\subsetneq A(L)$.
Then
\begin{align*}
\# \{L/k \mid [L:k] = \ell \textrm{ and }\ \exists P \in A(L)\setminus A(k) \text{ such that } nP \in A(k) \textrm{ for some } n \in \Z_{>0}\} < C(A,\ell),
\end{align*}
where $C(A,\ell)$ is a constant depending only on $A$ and $\ell$. In particular, for all but finitely many degree $\ell$ extensions $L$ with $A(k)\subsetneq A(L)$, 
\[
\operatorname{rk} A(L) > \operatorname{rk} A(k).
\]
\end{lemma}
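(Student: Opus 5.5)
The plan is to show that a point $P\in A(L)\setminus A(k)$ with $nP\in A(k)$ lies in a finite set of $\ol{k}$-points that depends only on $A$ and $\ell$. Such a $P$ then generates $L$ over $k$, and only finitely many fields arise.

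First, because $\ell$ is prime, $k$ has no intermediate fields strictly between $k$ and $L$. So any $P\in A(L)\setminus A(k)$ satisfies $k(P)=L$, and it suffices to bound the number of such points $P$ up to the choice of field.

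The key step is to bound the order of $P$ modulo $A(k)$. Let $m$ be the smallest positive integer with $mP\in A(k)$. Write $\tau$ for a conjugate-sum operator: choose a Galois closure $M$ of $L/k$ and the $\ell$ embeddings $\iota_1,\dots,\iota_\ell\colon L\to \ol{k}$, and set $\mathrm{Tr}(P)=\sum_j \iota_j(P)\in A(k)$. Then for each $g\in\Gal(\ol{k}/k)$ we have $m(gP-P)=g(mP)-mP=0$, so $gP-P\in A[m](\ol{k})$. I would instead aim for a bound on $m$ independent of $L$: since $\ell P-\mathrm{Tr}(P)=\sum_j (P-\iota_jP)$ is a sum of torsion points, the element $\ell P$ lies in $A(k)+A(M)_{\mathrm{tors}}$. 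I then need to control the torsion of $A(M)$ uniformly. The most robust route is to use the cocycle $g\mapsto gP-P$, which lands in $A(L')_{\mathrm{tors}}$ for the compositum. Its values are torsion points defined over an extension of $k$ of degree at most $\ell!$, and by Northcott/Merel-type finiteness — more simply, by reduction at two good primes, which bounds $\#A(F)_{\mathrm{tors}}$ uniformly over all $F$ of bounded degree — they lie in a finite group $A[N]$ with $N=N(A,\ell)$. Hence $(g-1)(NP)=0$ for all $g$, so $NP\in A(k)$.

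Next I would parametrise. Fix representatives $Q_1,\dots,Q_s$ of $A(k)/NA(k)$, which is finite by Mordell–Weil. If $NP=Q_i+NR$ with $R\in A(k)$, then $P-R$ satisfies $N(P-R)=Q_i$, so $P-R\in N^{-1}Q_i$, a set of $N^{2\dim A}$ geometric points. Since $k(P-R)=k(P)=L$, each $L$ in the set is the field of definition of one of the finitely many points in $\bigcup_i N^{-1}Q_i$. This gives $C(A,\ell)=s\cdot N^{2\dim A}+1$.

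For the "in particular" statement: if $A(k)\subsetneq A(L)$ and $\rk A(L)=\rk A(k)$, then $A(k)$ has finite index in $A(L)$, so every $P\in A(L)\setminus A(k)$ has a multiple in $A(k)$. Such an $L$ is therefore among the finitely many fields just counted.

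The main obstacle is the uniform torsion bound $N$. I would justify it by noting that all the cocycle values are torsion points over fields of degree at most $\ell!$ over $k$. For a prime $\mathfrak p$ of good reduction, reduction injects prime-to-$p$ torsion into $A(\mathbb F)$ for residue fields of degree at most $\ell!\,[k:\Q]$ over $\mathbb F_p$. Using two primes of different residue characteristic then gives a bound on the order of the torsion group, and hence an exponent $N$, depending only on $A$ and $\ell$.
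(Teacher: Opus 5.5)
Your proof is correct, and it is organised differently from the paper's.

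\textbf{The paper's route.} The paper splits the exceptional fields into two groups.
\begin{enumerate}[(i)]
\item The set $S_{\mathrm{tors}}$ of fields acquiring new torsion. This is bounded by a uniform torsion bound over the compositum $M_\ell$ of all degree-$\ell$ fields of bounded discriminant. That step needs the observation that $\Gal(M_\ell/k)$ has exponent dividing $\ell!$.
\item The remaining fields. Here the paper uses inflation--restriction, namely the vanishing of $\H^1(\Gal(M/k),A[n](M))$ for $n$ coprime to $\ell!$, to show that such $n$ never work. A minimality argument then reduces to the finitely many sets $S_m$ with $m\le\ell$. Each $S_m$ is bounded by the Mordell--Weil count of $[m]^{-1}$ of representatives of $A(k)/mA(k)$.
\end{enumerate}

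\textbf{Your route.} You bypass the cohomological step, the torsion/non-torsion case split and the minimality argument entirely. The cocycle $g\mapsto gP-P$ takes torsion values in $A(M)$, where $M$ is the Galois closure of the single field $L$, so $[M:k]\le \ell!$. The same reduction-at-two-primes bound then gives one uniform $N=N(A,\ell)$ with $NP\in A(k)$. A single application of the $[N]^{-1}$ count over representatives of $A(k)/NA(k)$ finishes the argument, using that $k(P-R)=L$ because $\ell$ is prime. New torsion points are handled by the same argument with no separate treatment.

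\textbf{What each buys.} Your argument is shorter and more elementary. It also needs the torsion bound only for Galois closures of individual degree-$\ell$ fields, not for the growing compositum $M_\ell$. The paper's argument keeps the divisibility level in the non-torsion part down to $m\le\ell$. Your $N$ is instead roughly $\mathrm{lcm}(1,\dots,B)$ for a torsion bound $B$, so the paper produces a far smaller explicit constant $C(A,\ell)$.

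\textbf{Minor point.} The trace digression in your second paragraph is never used and can be deleted.
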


\begin{proof}
Let \[S=\{L/k \mid [L:k] = \ell \textrm{ and }\ \exists P \in A(L)\setminus A(k) \text{ such that } nP \in A(k) \textrm{ for some } n \in \Z_{>0}\}\] 
and let
\[S_{\mathrm{tors}} = \{L/k \mid [L \colon k] = \ell \textrm{ and } A(k)_{\mathrm{tors}}\subsetneq A(L)_{\mathrm{tors}}\}.\]

\textbf{Step 1.} First, we bound $S_{\mathrm{tors}}$ by a constant depending only on $A$ and $\ell$. 

Let $X>0$. Let $k_\ell$ be the compositum inside $\ol{k}$ of all degree $\ell$ extensions of $k$ whose discriminant is at most $X$, and let $M_\ell$ denote the Galois closure of $k_\ell/k$. By Hermite's theorem, \(M_\ell\) is a finite extension of \(k\).
Observe that the exponent of $\Gal(M_\ell/k)$ divides $\ell!$. This is because $M_\ell$ is the compositum of the Galois closures of all degree $\ell$ extensions of $k$ with discriminant at most $X$, so $\Gal(M_\ell/k)$ is a subgroup of the direct product of groups $\Gal(L/k)$, where $L$ runs over the Galois closures of those degree $\ell$ extensions. For each such $L$, we have $\#\Gal(L/k) \mid \ell!$, so the exponent of $\Gal(M_\ell/k)$ divides $\ell!$, as claimed.

Let $\cO_\ell$ be the ring of integers of $M_\ell$, let $\kappa_{\mathfrak{p}}$ denote the residue field at a prime $\mathfrak{p}\subset \cO_k$, and $\kappa_\mathfrak{q}$ the residue field at a prime $\mathfrak{q}\subset \cO_\ell$ above $\mathfrak{p}$. Recall that $\Gal(\kappa_\mathfrak{q}/\kappa_\mathfrak{p})\cong D_\mathfrak{q}/I_\mathfrak{q}$ is a cyclic subquotient of $\Gal(M_\ell/k)$. Therefore, $\# \Gal(\kappa_\mathfrak{q}/\kappa_\mathfrak{p}) \mid \ell!$ and consequently $\#\kappa_\mathfrak{q}\leq \#\kappa_\mathfrak{p}^{\ell!}$. 

Let $\mathfrak{p}\subset \cO_k$ be a prime of good reduction of \(A\) lying above a rational prime \(p \in \Z\).
Let $A(M_\ell)(p')$ denote the set of torsion points in $A(M_\ell)$ of order coprime to $p$. 
Then for any prime $\mathfrak{q} \subset \cO_\ell$ above $\mathfrak{p}$, there is an embedding \cite[Theorem C.1.4]{HindrySilverman}
\[
A(M_\ell)(p')\hookrightarrow A(\kappa_{\mathfrak{q}}).
\]

Let $\mathfrak{p}_1,\mathfrak{p}_2$ be the two smallest primes of good reduction of $A$ lying above distinct rational primes, and let $\mathfrak{q}_1,\mathfrak{q}_2\in \mathcal{O}_\ell$ be primes lying above $\mathfrak{p}_1,\mathfrak{p}_2$ respectively. 
Using the embeddings above, we get \[\#A(M_\ell)_{\mathrm{tors}} \leq \# A(\kappa_{\mathfrak{q}_1}) \# A(\kappa_{\mathfrak{q}_2}).\] We can bound $\#A(\kappa_{\mathfrak{q}_i})$ in terms of $\#\kappa_{\mathfrak{q}_i}$ and $\dim(A)$. Since $\#\kappa_{\mathfrak{q}_i} \leq \#\kappa_{\mathfrak{p}_i}^{\ell!}$, this means there is a bound $\#A(M_\ell)_{\mathrm{tors}} \leq B(A, \ell)$ depending only on $A$ and $\ell$, independent of $X$.
Letting $X\rightarrow \infty$ we see that 
\[\# S_{\mathrm{tors}}\leq B(A, \ell).\]

\textbf{Step 2.} 
Let $L \in S \setminus S_{\mathrm{tors}}$, and let $P \in A(L)\setminus A(k)$. We show that if $n\in\Z$ is coprime to $\ell!$ then $n P \notin A(k)$.

Let $n$ be coprime to $\ell!$ and suppose for contradiction that $n P=Q$ for some $Q\in A(k)$. 
Let $M$ be the Galois closure of $L/k$. 
Consider the following commutative diagram, where the vertical
exact sequence is the inflation-restriction sequence. 

\begin{center}
\begin{tikzcd}
            &                                          &  & 0 \arrow[d]                                  &  &                                        &   \\
            &                                          &  & {\H^1(\Gal(M/k),A[n](M))} \arrow[d]        &  &                                        &   \\
0 \arrow[r] & A(k)/n A(k) \arrow[rr] \arrow[d, "f"] &  & {\H^1(k,A[n])} \arrow[rr] \arrow[d, "g"] &  & {\H^1(k,A)[n]} \arrow[r] \arrow[d] & 0 \\
0 \arrow[r] & A(M)/n A(M) \arrow[rr]                &  & {\H^1(M,A[n])} \arrow[rr] &  & {\H^1(M,A)[n]} \arrow[r]           & 0
\end{tikzcd}
\end{center}
Since $\Gal(M/k)$ has order dividing $\ell!$ and $n$ is coprime to $\ell!$, we have $\H^1(\Gal(M/k),A[n](M))=0$ by Schur's Lemma and hence $g$ is injective. A diagram chase implies that $f$ is injective and therefore $Q=nR$ for some $R\in A(k)$. This implies that $n(P - R) = O_A$, and hence $P - R \in A(L)_{\mathrm{tors}}\setminus A(k)$. In particular, $L \in S_{\mathrm{tors}}$, giving the desired contradiction.

\textbf{Step 3.} For each $m\in \Z_{>0}$, we show that the set
\[S_m \coloneq\{L/k \mid [L:k] = \ell,\ \exists P \in A(L)\setminus A(k) \text{ such that } m P \in A(k)\}\]
is finite, and its size can be bounded purely in terms of $A$ and $m$.

Let $\{P_1, \ldots, P_r\}$ be a set of generators for the finitely generated group $A(k)$. Consider the finite subset $A_{\mathrm{fin}}(k) = \{a_1 P_1 + \cdots + a_r P_r \mid 0 \leq a_i < m\} \subset A(k)$. Let $L \in S_{m}$ and let $P \in A(L)\setminus A(k)$ be such that $m P \in A(k)$.
Then we can write $m P = R + m Q$ for some $R \in A_{\mathrm{fin}}(k)$ and $Q \in A(k)$, so that have $P-Q \in A(L)\setminus A(k)$ and $m(P-Q) \in A_{\mathrm{fin}}(k)$. Since $A_{\mathrm{fin}}(k)$ and $A[m]$ are finite, there are only finitely many choices for the point $P-Q$, and these finitely many points have finitely many fields of definition. Therefore, $S_m$ is finite and its size can be bounded purely in terms of $A$ and $m$, as claimed.

\textbf{Step 4.} We show that \[S=S_{\mathrm{tors}} \cup \bigcup_{m \leq \ell} S_{m}.\] 

Let $L \in S \setminus (S_{\mathrm{tors}} \cup \bigcup_{m \leq \ell} S_{m})$ and let $n_0$ be the smallest $n\in \Z_{>0}$ such that there exists $P\in A(L)\setminus A(k)$ with $nP\in A(k)$. Since $L\notin S_{\mathrm{tors}}$, Step 2 shows that $n_0$ is not coprime to $\ell!$ so we can write $n_0=rq$ for some $1 < q \leq \ell$. Since $L\notin S_{q}$, we have $qP\in A(L)\setminus A(k)$ and $rqP=n_0 P\in A(k)$. But this contradicts the minimality of $n_0$.

Now our proof is complete, since $\# S_{\mathrm{tors}}$ and $\#\bigcup_{m \leq \ell} S_{m}$ can be bounded purely in terms of $A$ and $\ell$.
\end{proof}

Let $\iota:A \hookrightarrow \mathbb{P}_k^n$ be an embedding of $A$ into projective space of dimension $n$. Consider the line bundle $\mathcal{O}_{\mathbb{P}^n_k}(1)$. 
The space of global sections \(\Gamma(\mathbb{P}^n_k,\mathcal{O}_{\mathbb{P}^n_k}(1))\) is generated by the sections $x_0,\dots, x_n$,
corresponding to the projective coordinates on $\mathbb{P}^n_k$.
Let $O_A$ be the identity element of $A$, and let  $O\coloneq\iota(O_A)$. The hyperplanes passing through $O = [o_0:\dots:o_n]$ correspond to a linear subspace $V \subset \Gamma(\mathbb{P}^n_k,\mathcal{O}_{\mathbb{P}^n_k}(1))$ of codimension 1. More precisely, the hyperplane $a_0x_0+a_1x_1+\dots +a_nx_n\in \Gamma(\mathbb{P}^n_k,\mathcal{O}_{\mathbb{P}^n_k}(1))$ is in $V$ if
\[
a_0o_0+a_1o_1+\dots +a_no_n=0.
\]
This linear subspace $V$ is a linear system. The base locus of this linear system is the scheme-theoretic intersection of the vanishing loci of the sections of $V$, which is $O$. Pulling \(V\) back along \(\iota \colon A \to \P^n\), we get a linear system on \(A\) whose base locus is \(O_A\).

\begin{proof}[Proof of Theorem~\ref{thm:intro-main-rank-jumps}]
Let $g$ denote the dimension of the abelian variety $A$. In the case $g=1$, Theorem~\ref{thm:intro-main-rank-jumps} (with $N=2$) follows from~\cite[Corollary~2]{daveacta}. 
Henceforth, suppose $g\geq 2$. 
Let $\iota:A\rightarrow \mathbb{P}^n_k$ be an embedding of $A$ into projective space. Such an embedding always exists if $n\geq 2g+1$. Now let $\iota^*V\subset \Gamma(A, \iota^*\mathcal{O}_{\mathbb{P}^n_k}(1))$ be the restriction to $A$ of the linear system of hyperplanes passing through $O$, as constructed above. The base locus $B$ of this linear system is the point $O_A$. Since $\dim \{O_A\}=0<\frac{1}{2}g$, Lemma~\ref{lem:bertiniwithpoint} allows us to 
choose a member of $\iota^*V$ (corresponding to a hyperplane section of the image \(\iota(A)\)) that is smooth, geometrically integral and of dimension $g-1$. 
Note that \(O_A\) lies on this member.
Iterating this procedure we obtain a smooth, geometrically integral curve $C$ with $O_A\in C\subset A$.

Let $g(C)$ denote the genus of $C$. Because an abelian variety cannot contain a rational curve (see e.g.\ \cite[Corollary~3.8]{milneAV}), we have $g(C) > 0$. First, suppose that $g(C) = 1$. Then $C$ is an elliptic curve, because $O_A \in C(k)$. Furthermore, the inclusion \(C \hookrightarrow A\) necessarily factors through the Albanese map \(C \to \mathrm{Jac}(C) \to A\), so the group  structure on the points of \(C \cong \mathrm{Jac}(C)\) is compatible with that on \(A\). Now~\cite[Corollary~2]{daveacta} shows that for every prime $\ell$, there exist infinitely many extensions $L/k$ of degree $\ell$ such that $\rk C(L)>\rk C(k)$. Take one such extension $L/k$ and suppose for contradiction that $\rk A(L)=\rk A(k)$. Then the group index $[A(L):A(k)]$ is finite and therefore $[C(L):C(k)]$ is finite, which gives the desired contradiction with the fact that $\rk C(L)>\rk C(k)$. 

Suppose $g(C) > 1$. Let $N = 2 g(C)$. By Riemann--Roch, for any prime $\ell \geq N$,
the divisor $\ell O_A$ defines a linear system with no base points, and so defines a morphism $C\rightarrow \mathbb{P}^1_k$ of degree $\ell$ (using \cite[Lemma 3.1.2]{VirayVogt2024}). Now \cite[Proposition 3.3.1]{VirayVogt2024}
shows that $C$ has infinitely many points of degree $\ell$. Since $C(L)$ is finite for any finite extension $L/k$, 
we conclude that there are infinitely many field extensions $L/k$ of degree $\ell$ for which $A(k)\subsetneq A(L)$. 
An application of Lemma~\ref{lemma:dave} completes our proof in this case.
\end{proof}

\section{Bielliptic surfaces and friends: proof of Theorem~\ref{thm:intro bielliptic friend}}
\label{sec:quotients-general}

We now describe a more general set-up for which the methods in Section \ref{sec:weak-approx} apply quite directly, which includes the case of bielliptic surfaces. 

Let \(G\) be a linear algebraic group over \(k\) and let \(X\) be a smooth, proper, geometrically integral variety over \(k\) such that \(X\) admits a morphism \(f \colon Y \to X\) giving \(Y\) the structure of an \(X\)-torsor under \(G\). Suppose further that \(Y\) is a \(k\)-torsor under \(A\) for an abelian variety \(A\). Analogously to Section \ref{sec:Xobs}, for each \(\mathcal{B} \in  \{ \locconstBr_{Ma},\locconstBr_{Sk},\locconstBr_{Cr},\Br\}\), we use Definition \ref{defn:f-Brnr_pt} to define the obstruction set
\begin{equation*}\label{eq:fBXobs} \widetilde{Z}_{0,\Omega}(X)^{f,\mathcal{B}}\coloneq\refrec_{X, \Omega}\left(\bigoplus_{L/k \ \textrm{finite}}\Z[X(L_{\Omega})^{f,\mathcal{B}}]\right)  \subset Z_{0,\Omega}(X).\
\end{equation*}
The following theorem is a corollary of Proposition \ref{prop:propofrefobs} and the proof of Theorem~\ref{thm:hasseprinciplebody}.

\begin{theorem} \label{thm:quotabvars}
Maintaining the above notation, suppose that for every finite extension \(L/k\) and every \(\sigma \in \H^1(L, G)\), we have that \(\Sha(A^{\sigma}_L)\) is finite. Let $\delta \in \Z$. Then
\begin{enumerate}[(i)]
    \item the set \(Z_0^{\delta}(X)\) is dense in \(\RefinedObsDeg{X}{\delta}^{f,\Br}\) with respect to the weak approximation topology;
    \item for any \(\mathcal{B} \in  \{ \locconstBr_{Ma},\locconstBr_{Sk},\locconstBr_{Cr},\Br\}\), if \(\RefinedObsDeg{X}{\delta}^{f,\mathcal{B}} \neq \emptyset\), then \(Z_0^{\delta}(X) \neq \emptyset\).
\end{enumerate}
\end{theorem}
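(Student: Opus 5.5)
For part (i), I will apply Proposition~\ref{prop:propofrefobs} to the collection $\mathscr{X}^{(L)} \coloneq X(L_\Omega)^{f,\Br}$. Since $X$ is proper, $X(L_\Omega)=X(\A_L)$ and $\Brnr=\Br$. Then I need two things: first, $X(L)\subset \mathscr{X}^{(L)}$, which is \eqref{eqn:X-descent-fB}; second, $X(L)$ is dense in $\mathscr{X}^{(L)}$ for the product topology.

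For density, write $\mathscr{X}^{(L)}=\bigcup_{\sigma\in\H^1(L,G)} f^\sigma_L(Y^\sigma_L(L_\Omega)^{\Br})$. Each $Y^\sigma_L$ is an $L$-torsor under $A^\sigma_L$, because the $G$-action used for twisting is compatible with the torsor structure, as in the proof of Lemma~\ref{lemma:from-0-cycle-to-adelic-points}. By hypothesis $\Sha(A^\sigma_L)$ is finite, so Theorem~\ref{thm:HP/WA-torsors}\ref{part:HP/WA-torsors-1} gives
\[\overline{Y^\sigma_L(L)} = Y^\sigma_L(L_\Omega)^{\Br_1}\supset Y^\sigma_L(L_\Omega)^{\Br}.\]
Since $f^\sigma_L$ is continuous for the product topology, $f^\sigma_L(Y^\sigma_L(L))\subset X(L)$ is dense in $f^\sigma_L(Y^\sigma_L(L_\Omega)^{\Br})$. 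Taking the union over $\sigma$ gives density of $X(L)$ in $\mathscr{X}^{(L)}$. The degree-$\delta$ version of Proposition~\ref{prop:propofrefobs} then gives $\RefinedObsDeg{X}{\delta}^{f,\Br}\subset\overline{Z_0^\delta(X)}$. Together with $Z_0^\delta(X)\subset\RefinedObsDeg{X}{\delta}^{f,\Br}$ (Lemma~\ref{lem:globalinobs}), this proves (i).

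For part (ii), I will follow the proof of Theorem~\ref{thm:hasseprinciplebody}, which is simpler here because there is no removed fixed locus. Take $z\in\RefinedObsDeg{X}{\delta}^{f,\mathcal{B}}$. Exactly as in Lemma~\ref{lemma:from-0-cycle-to-adelic-points}, $z$ is a $\Z$-combination of the cycles $N^\Omega_{L_i/k}(f^{\sigma_{i,j}}_{L_i}(y_{i,j}))$ with $y_{i,j}\in Y^{\sigma_{i,j}}_{L_i}((L_i)_\Omega)^{\mathcal{B}}$. Comparing degrees shows $\gcd_i[L_i:k]\mid\delta$. Choose one $\sigma_i$ for each $i$.

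By \eqref{eq: russian B sets} the set $Y^{\sigma_i}_{L_i}((L_i)_\Omega)^{\locconstBr_{Ma}}$ is nonempty, whatever $\mathcal{B}$ is, since every listed $\mathcal{B}$ contains $\locconstBr_{Ma}$. Theorem~\ref{thm:HP/WA-torsors}\ref{part:HP/WA-torsors-2} then gives $Y^{\sigma_i}_{L_i}(L_i)\neq\emptyset$, so $X(L_i)\neq\emptyset$ via $f^{\sigma_i}_{L_i}$. Since $\gcd_i[L_i:k]\mid\delta$, a suitable $\Z$-combination of the cycles $N_{L_i/k}(x_i)$, with $x_i\in X(L_i)$, is a global 0-cycle of degree $\delta$. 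Here Lemma~\ref{lem:L'} and the rank-jump argument are unnecessary, because $f$ is defined on all of $Y$.

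I expect the main obstacle to be justifying that $Y^\sigma_L$ is a torsor under $A^\sigma_L$ for a general linear algebraic group $G$ acting on $Y$. This requires $G$ to act through automorphisms compatible with the $A$-torsor structure, say by translations composed with group automorphisms. It holds in the bielliptic case and is implicit in the hypothesis that $A^\sigma_L$ makes sense. A secondary point is making sure the density in (i) is taken in the topology that Proposition~\ref{prop:propofrefobs} requires, namely the product topology on $X(L_\Omega)=X(\A_L)$.
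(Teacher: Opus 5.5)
Your proposal is correct and follows the paper's route. The paper deduces the theorem from Proposition~\ref{prop:propofrefobs}, with density of $X(L)$ in $X(L_\Omega)^{f,\Br}$ coming from Harari's theorem on the twisted torsors $Y^\sigma_L$, and from the proof of Theorem~\ref{thm:hasseprinciplebody}; as you observe, the rank-jump step of Lemma~\ref{lem:L'} is superfluous there because $f$ is defined on all of $Y$, and the compatibility making $Y^\sigma_L$ a torsor under $A^\sigma_L$, which you flag, is left implicit in the paper's hypotheses as well.
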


Of particular interest is the application to bielliptic surfaces. Recall from the Enriques--Kodaira classification for surfaces that a bielliptic surface $X$ over a number field $k$ is a smooth, projective, geometrically integral surface with Kodaira dimension \(0\), geometric genus $p_g(X) = 0$ and irregularity $q(X) = 1$. This implies that $\chi(\mathcal{O}_X) = 0$ (using the formula $\chi(\mathcal{O}_V) = 1 - q(V) + p_g(V)$ for smooth projective surfaces $V$ of Kodaira dimension \(0\)). Since $\omega_X$ is non-trivial, and either $\omega_X^{\otimes 4}$ or $\omega_X^{\otimes 6}$ is trivial
(\cite[Corollary VIII.7]{Beauville96-CAS}), it follows that $\omega_X$ yields a non-trivial canonical $X$-torsor $f : Y \to X$ under a finite linear algebraic group $F$ (of size equal to the order of $\omega_X$) over $k$. 
Moreover, $\omega_Y = \mathcal{O}_Y$ so that $p_g(Y)=1$ and $\chi(\mathcal{O}_Y) = \deg(f)\chi(\mathcal{O}_X) = 0$ (\cite[Lemma VI.3]{Beauville96-CAS}).
We deduce that $q(Y) = 2$ and thus that $Y$ is a $k$-torsor 
under its Albanese variety (i.e.\ $[Y] \in \H^1(k, A)$ for the abelian variety $A = \Alb_{Y/k}$).
Therefore, we are in the scenario of Theorem~\ref{thm:quotabvars} and can deduce the following corollary.

\begin{corollary}\label{cor:biell}
Let $X$ be a bielliptic surface over a number field $k$ together with its canonical torsor $f: Y \to X$ under a finite group $F$ with $Y$ a $k$-torsor under its Albanese variety $A = \Alb_{Y/k}$. Suppose that for every finite extension \(L/k\) and every \(\sigma \in \H^1(L, F)\), we have that \(\Sha(A^{\sigma}_L)\) is finite. Let $\delta \in \Z$.
Then
\begin{enumerate}[(i)]
    \item 
    the set \(Z_0^{\delta}(X)\) is dense in \(\RefinedObsDeg{X}{\delta}^{f,\Br}\) with respect to the weak approximation topology;\label{cor:biell(i)}
    \item for any \(\mathcal{B} \in  \{ \locconstBr_{Ma},\locconstBr_{Sk},\locconstBr_{Cr},\Br\}\), if \(\RefinedObsDeg{X}{\delta}^{f,\mathcal{B}} \neq \emptyset\), then \(Z_0^{\delta}(X) \neq \emptyset\).
\end{enumerate}
\end{corollary}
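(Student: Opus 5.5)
The plan is to verify that a bielliptic surface with its canonical torsor satisfies the hypotheses of Theorem~\ref{thm:quotabvars}, and then quote that theorem verbatim with $G=F$.

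\textbf{Hypotheses.} From the discussion preceding the statement:
\begin{enumerate}[(a)]
\item $X$ is smooth, projective, hence proper, and geometrically integral.
\item $F$ is a finite linear algebraic group over $k$, and $f\colon Y\to X$ is an $X$-torsor under $F$.
\item $Y$ is a $k$-torsor under the abelian variety $A=\Alb_{Y/k}$.
\end{enumerate}
For (c) I will recall the argument. Since $\omega_Y\cong \mathcal{O}_Y$ and $\chi(\mathcal{O}_Y)=\deg(f)\,\chi(\mathcal{O}_X)=0$, we get $q(Y)=2$. Hence $Y_{\ol k}$ is an abelian surface, and so the Albanese map $Y\to \Alb^1_{Y/k}$ makes $Y$ a torsor under $A$.

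\textbf{Twisting hypothesis.} Theorem~\ref{thm:quotabvars} needs finiteness of $\Sha(A^\sigma_L)$ for all $L/k$ finite and all $\sigma\in\H^1(L,F)$. This is exactly the hypothesis of the corollary with $G=F$, provided the twist $A^\sigma_L$ means the same thing in both statements.

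This is the one point needing care. The action of $F$ on $Y$ must be compatible with an action of $F$ on $A$ (by group automorphisms, possibly combined with translations), so that $Y^\sigma_L$ is an $L$-torsor under the twist $A^\sigma_L$. This was used implicitly in the proof of Theorem~\ref{thm:hasseprinciplebody} and Lemma~\ref{lemma:from-0-cycle-to-adelic-points}.

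For a bielliptic surface, $F$ acts freely on $Y$. Any automorphism of a torsor under an abelian variety induces, by functoriality of the Albanese, an automorphism of $A=\Alb^0_{Y/k}$. The twist $Y^\sigma_L$ is then a torsor under $\Alb^0(Y^\sigma_L)=A^\sigma_L$, where $A^\sigma_L$ is the twist via this induced action; translations act trivially on $\Alb^0$. This is the step I expect to be the main obstacle, since it requires checking that the notation $A^\sigma_L$ is consistent with Theorem~\ref{thm:quotabvars}.

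\textbf{Conclusion.} Parts~\ref{cor:biell(i)} and (ii) now follow immediately from parts (i) and (ii) of Theorem~\ref{thm:quotabvars}, applied with $G=F$.
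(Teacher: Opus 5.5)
Your proposal is correct and follows the paper's route. The paper likewise gets $\omega_Y\cong\mathcal{O}_Y$ and $\chi(\mathcal{O}_Y)=0$, deduces $q(Y)=2$ and that $Y$ is a torsor under $\Alb_{Y/k}$, and then applies Theorem~\ref{thm:quotabvars} with $G=F$; your check that $F$ acts on $A$ through the induced action on $\Alb^0$ (so that $Y^\sigma_L$ is a torsor under $A^\sigma_L$) is a point the paper leaves implicit, not a genuine obstacle.
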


\begin{remark}
In \cite[Corollary 3.1]{Sko-Descent-EtBrauer}, under the usual finiteness assumption for Tate--Shafarevich groups of abelian varieties, Skorobogatov proves that if $X$ is a bielliptic surface over a number field $k$, then $X(k)$ is dense in the finite descent set $X(\A_k)^{\et}_\bullet$, where $_\bullet$ here denotes that, for any infinite place $v \in \Omega_k$, instead of considering the local points $X(k_v)$, we consider the connected components of $X(k_v)$, i.e.\ $\pi_0(X(k_v))$. Once one makes sense of this connected components subtlety in the context of 0-cycles, it is likely that one could use Proposition \ref{prop:propofrefobs} to prove a similar result to Corollary~\ref{cor:biell}\ref{cor:biell(i)}, but with $\widetilde{Z}^{\delta}_{0,\Omega}(X)^{f, \Br}$ replaced by $\widetilde{Z}^{\delta}_{0,\Omega}(X)^{\et}_{\bullet}$.
\end{remark}

\section{On a question of Zhang} 
\label{sec:huizhang}
In this section, we give another application of our refined obstruction sets. 
Let \(X\) be any smooth, proper, geometrically integral variety over a number field \(k\), and let
\[
Z_{0, \Omega}(X)^{\conn} \coloneq\bigcap_{\substack{G \text{ connected}\\ \text{over $k$}}} \bigcap_{[f \colon Y \to X] \in \H^1(X, G)} Z_{0, \Omega}(X)^{f},
\]
where \(Z_{0, \Omega}(X)^{f}\) is as in Definition \ref{defn:non-refined-f-descent}, \(G\) ranges over all connected linear algebraic groups over \(k\), and $f \colon Y \to X$ ranges over all $X$-torsors under $G$. 
In \cite[Remark 5.8]{Zhang-0cycles-25}, Zhang asks whether there are any reasonable assumptions under which $\overline{Z_{0, \Omega}(X)^{\conn}} = Z_{0, \Omega}(X)^{\Br}$, 
where the closure is with respect to the weak approximation topology. 
Using our refined obstruction sets, we are able to show that this is indeed the case under some conditions. We will prove Theorem~\ref{thm:conndense}, restated below.

\conndense*

The strategy for the proof of Theorem \ref{thm:conndense} is to show that, under the assumptions in the statement of the theorem, the closure of the refined Brauer--Manin obstruction set $\RefinedObs{X}^{\Br}$ is $Z_{0, \Omega}(X)^{\Br}$. Since the refined Brauer--Manin obstruction is defined using obstruction sets of the form $X(\A_L)^{\Br}$ for rational points, we can use a result of Harari (\cite[Th\'{e}or\`{e}me 2]{Harari02}, using the fact that \(\PGL_n\) is connected)
which shows that
$X(\A_L)^{\Br} = X(\A_L)^{\conn}$, where the latter is defined in~\eqref{eq:conn desc pts}. In particular,  we identify the refined Brauer--Manin obstruction \(\RefinedObs{X}^{\Br}\) with the refined connected descent obstruction set \[\RefinedObs{X}^{\conn} \coloneq \refrec_{X, \Omega}\left(\bigoplus_{L/k \ \textrm{finite}}\Z[X(\A_L)^{\conn}]\right)  .\]
Finally, we will show that $\RefinedObs{X}^{\conn} \subset Z_{0, \Omega}(X)^{\conn}$, 
which, together with $Z_{0, \Omega}(X)^{\conn} \subset Z_{0,\Omega}(X)^{\Br}$ (see \cite[Remark 5.8]{Zhang-0cycles-25}), yields the chain of inclusions
\[
\RefinedObs{X}^{\Br} = \RefinedObs{X}^{\conn} \subset Z_{0, \Omega}(X)^{\conn} \subset Z_{0,\Omega}(X)^{\Br}
\]
and thus the required result, since, as we will see, $\overline{\RefinedObs{X}^{\Br}} = Z_{0,\Omega}(X)^{\Br}$.
Let us now prove everything that is needed in the above strategy.

\begin{proposition} \label{prop:wudhiq}
Let $X$ be a smooth, proper, geometrically integral variety over a number field $k$, such that
\begin{itemize}
    \item[(i)] $Z_{0, \Omega}(X)^{\Br}$ is open in $Z_{0, \Omega}(X)$ with respect to the weak approximation topology, and 
    \item[(ii)] for every positive integer $d$, there exists a finite extension $k'/k$ such that for any degree $d$ extension
$K/k$ which is linearly disjoint from $k'/k$, the restriction map
\[
    \Br(X)/\Br_0(X) \to \Br(X_K)/\Br_0(X_K)
\]
is surjective.
\end{itemize}
Then $\overline{\RefinedObs{X}^{\Br}} = Z_{0,\Omega}(X)^{\Br}$.
\end{proposition}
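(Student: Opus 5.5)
One inclusion is formal. If $L/k$ is finite and $(x_w)_w\in X(\A_L)^{\Br}$, then for every $\alpha\in\Br(X)$ the projection formula $\mathrm{inv}_v\circ\mathrm{cores}_{L_w/k_v}=\mathrm{inv}_w$ gives
\[\lr{N^\Omega_{L/k}((x_w)_w),\alpha}_{BM}=\lr{(x_w)_w,\mathrm{res}_{L/k}\alpha}_{BM}=0.\]
Hence $\RefinedObs{X}^{\Br}\subset Z_{0,\Omega}(X)^{\Br}$. By hypothesis (i), $Z_{0,\Omega}(X)^{\Br}$ is open. In the weak approximation topology, which is defined by the subgroups $\mathcal{A}_{n,S}$, an open subgroup is also closed. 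Therefore $\overline{\RefinedObs{X}^{\Br}}\subset Z_{0,\Omega}(X)^{\Br}$.

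For the reverse inclusion I would follow the strategy of \cite[Theorem 3.2.1]{Liang-Arithmetic-13} and \cite[Theorem 5.6]{Zhang-0cycles-25}. Fix $z=(z_v)_v\in Z_{0,\Omega}(X)^{\Br}$, together with $n$ and a finite set $S\subset\Omega$. Since $X$ is proper, $\Br(X)/\Br_0(X)$ is finite modulo $n$-divisible parts. By openness (hypothesis (i)) and Proposition~\ref{prop:BB-Wittenberg-rephrasing-WA}, we may enlarge $S$ and $n$ so that pairing against a finite set $\{\alpha_1,\dots,\alpha_m\}$ of generators of the relevant finite quotient is locally constant on $\mathcal{A}_{n,S}$-cosets. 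We may also add to $z$ a multiple of a fixed effective cycle, so that each $z_v$ is effective of a common large degree $d$.

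The next step is to move the local cycles to a single field. Using Hilbert irreducibility on $\sym^d$, or the fibration argument of Liang, I would choose for each $v\in S$ a closed point $P_v\in X_{k_v}$ of degree $d$ approximating $z_v$ in $h_0(X_{k_v})/n$. Then, via a global degree $d$ point of $\mathbb{P}^1$ (or of a curve), I would produce a degree $d$ extension $K/k$ with $K\otimes k_v$ matching the residue algebras of $P_v$ for $v\in S$. The field $K$ should also be chosen linearly disjoint from the field $k'$ provided by hypothesis (ii) for this $d$. This is possible because Krasner-type local conditions together with Hilbert irreducibility leave infinitely many choices. The $P_v$ then define a point $(x_w)_{w\mid v}\in\prod_{w\mid v,\,v\in S}X(K_w)$, whose norm approximates $z_v$. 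Outside $S$ I would fill in the components with arbitrary local points.

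The step I expect to be the main obstacle is adjusting $(x_w)$ so that it lies in $X(\A_K)^{\Br}$. By hypothesis (ii), every element of $\Br(X_K)/\Br_0(X_K)$ is $\mathrm{res}_{K/k}\beta$ for some $\beta\in\Br(X)$. Therefore
\[\lr{(x_w),\mathrm{res}\,\beta}=\lr{N^\Omega_{K/k}(x_w),\beta},\]
and the value at places in $S$ agrees with $\lr{z,\beta}$ restricted to $S$ by the local constancy arranged above. It remains to kill the contribution from places outside $S$. Here I would use Harari's formal lemma: after enlarging $S$ to $S'$, the components $x_w$ for $w$ over $v\in S'\setminus S$ are chosen so that the total pairing against the finitely many $\mathrm{res}\,\beta$ vanishes. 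For this I would choose $z_v$ at those places to be norms of $K_w$-points, by taking $v$ to split completely in $K$, and then compare with the condition $\lr{z,\beta}=0$. The result is $(x_w)\in X(\A_K)^{\Br}$ with $N^\Omega_{K/k}(x_w)-(\text{correction})$ lying in $z+\mathcal{A}_{n,S}$. The subtracted fixed cycle is handled in the same way using a second field, or a rational multiple, which shows that $z\in\overline{\RefinedObs{X}^{\Br}}$.
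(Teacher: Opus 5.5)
\textbf{Verdict: there is a genuine gap, at the step you yourself flag as the main obstacle.} Your first inclusion is correct. Deducing closedness from (i), since an open subgroup of a topological group is closed, is a valid substitute for the closedness result the paper quotes from Zhang. The problem is showing $(x_w)_w\in X(\A_K)^{\Br}$.

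You reduce to finitely many Brauer classes using the claim that, because $X$ is proper, $\Br(X)/\Br_0(X)$ is finite modulo $n$-divisible parts. This is not among the hypotheses, and properness alone does not give it. For example, for a curve of positive genus with a rational point and Jacobian $J$, one has $\Br(X)/\Br_0(X)\cong \H^1(k,J)$, which is infinite modulo $n$. Membership in $X(\A_K)^{\Br}$ requires orthogonality to \emph{all} of $\Br(X_K)$, and no argument about finitely many classes provides that.

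Harari's formal lemma does not rescue the step. It concerns classes on an open subvariety that are ramified along the complement. Every class on the proper $X$ is unramified, and unramified classes evaluate constantly at places outside a suitable finite set, so moving local points there cannot correct the pairing. The mechanism of re-choosing $z_v$ at places split in $K$ is also not coherent: $z$ is given, and $K$ has already been fixed.

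What you are missing is that hypothesis (i) already controls all places outside $S$. It gives $n_0, S_0$ with $\mathcal{A}_{n_0,S_0}\subset Z_{0,\Omega}(X)^{\Br}$, and $\mathcal{A}_{n,S}$ imposes no condition outside $S$. Take $n_0\mid n$ and $S\supseteq S_0$, with $S$ enlarged so that $X(k_v)\neq\emptyset$ for $v\notin S$; this lets you fill in arbitrary local points there. Suppose $(x_w)_w\in X(\A_K)$ satisfies $z+r\tilde x-N^\Omega_{K/k}((x_w)_w)\in\mathcal{A}_{n,S}$.
\begin{itemize}
\item Then $N^\Omega_{K/k}((x_w)_w)\in Z_{0,\Omega}(X)^{\Br}$, whatever its components outside $S$, because $z$ and the global cycle $r\tilde x$ lie in this group.
\item For any $\gamma\in\Br(X_K)$, hypothesis (ii) lets you write $\gamma=\alpha_K+\beta$ with $\alpha\in\Br(X)$ and $\beta$ constant. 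The projection formula then gives $\lr{(x_w)_w,\gamma}_{BM}=\lr{N^\Omega_{K/k}((x_w)_w),\alpha}_{BM}=0$.
\end{itemize}
This needs no finiteness and no formal lemma, and it is exactly how the paper concludes.

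Likewise, the auxiliary cycle should be $r\tilde x$ with $\tilde x$ a global closed point. Effectivity comes from passing to a rationally equivalent effective cycle, not from adding alone. Then $r\tilde x\in Z_0(X)\subset\RefinedObs{X}^{\Br}$, and the group property gives $N^\Omega_{K/k}((x_w)_w)-r\tilde x\in\RefinedObs{X}^{\Br}$ without a second field.
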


\begin{proof}
By definition, we have $\RefinedObs{X}^{\Br} \subset Z_{0,\Omega}(X)^{\Br}$. Since $Z_{0,\Omega}(X)^{\Br}$ is closed in $Z_{0, \Omega}(X)$ (\cite[Proposition 5.2]{Zhang-0cycles-25}) we have $\overline{\RefinedObs{X}^{\Br}} \subset Z_{0,\Omega}(X)^{\Br}$.
We now show the other containment, using elements from the proofs of \cite[Theorem 5.6]{Zhang-0cycles-25} and \cite[Theorem 3.2.1]{Liang-Arithmetic-13}. Let $(z_v)_v \in Z_{0,\Omega}(X)^{\Br}$ be an $\Omega$-local 0-cycle of degree $\delta$. By assumption $(i)$, $Z_{0,\Omega}(X)^{\Br}$ is open, so we can find some positive integer $n_0$ and some finite set of places $S_0 \subset \Omega$ such that there is an open set $\mathcal{A}_{n_0, S_0} \subset Z_{0,\Omega}(X)^{\Br}$ (see Definition~\ref{def:weakapproxtop})\footnote{In fact, since \(Z_{0,\Omega}(X)^{\Br}\) is a group, it contains all translates of the form \((z'_v)_v + \mathcal{A}_{n_0,S_0}\) for \((z'_v)_v \in Z_{0,\Omega}(X)^{\Br}\).}. 
Following the proof of \cite[Theorem 5.6]{Zhang-0cycles-25}, it is sufficient to show that, for any positive multiple $n$ of $n_0$ and any finite set of places $S \supset S_0$, there exists some $y_{n,S} \in \RefinedObs{X}^{\Br}$ with $(z_v)_v - y_{n, S} \in \mathcal{A}_{n, S}$.
Fix such $n$ and $S$. Fix a closed point $\tilde{x} \in X$, say of degree $d_{\tilde{x}} \coloneq[k(\tilde{x}):k]$. Enlarging $S$ if necessary and using the Lang--Weil estimates, we can assume that $S$ contains all infinite places and that $X(k_v) \neq \emptyset$ for all $v \notin S$.

Using the trivial fibration $\pi: X \times \P^1_k \to \P^1_k$ as in the proofs of Liang and Zhang,
by \cite[Lemma 3.2]{CT00-Surfaces} 
we obtain a positive integer $r$ (depending only on $S$ and $(z_v)_v$) such that, after fixing $0 \in \P^1_k(k)$ and an isomorphism $\A^1_k \cong \P^1_k \setminus \{0\}$, for each $v \in S$ we can replace  $(z_v \times 0) + r(\tilde{x} \times 0)$ by a rationally equivalent 0-cycle $z_v^1$ which is effective, sufficiently close to $(z_v \times 0) + r(\tilde{x} \times 0)$ (implying that they have the same image in the Chow group modulo $n$), such that \(\pi_{*}(z_v^1)\) is separable (i.e.\ a sum of distinct closed points) and supported on $\A^1_k$, and such that  $d_r\coloneq\delta + r d_{\tilde{x}} > 0$
(independent of $v$). By assumption $(ii)$, we may fix a finite extension $k'/k$ such that, for any extension $L/k$ of degree $d_r$ linearly disjoint from $k'$, the restriction map $\Br(X)/\Br_0(X) \to \Br(X_L)/\Br_0(X_L)$ is surjective. The base change morphism \(\P^1_{k'} \to \P^1_{k}\) is \'{e}tale and thus defines a Hilbertian subset $\text{\textbf{Hil}} \subset (\P^1_k)_0$ (\cite[Definition 4.6]{Liang-compatibility-23}).
Then by the arguments in \cite[Theorem 5.6]{Zhang-0cycles-25} and \cite[Proposition 4.7]{Liang-compatibility-23}, which use Hilbert irreducibility (\cite{Ekedahl90}) and the implicit function theorem on $\sym^{d_r}(\P^1_{k_v})$ and $\sym^{d_r}(X_{k_v})$, there is a closed point $\theta \in \text{\textbf{Hil}}$
such that 
\begin{itemize}
    \item \(K \coloneq k(\theta)\) has degree \(d_r\) and is linearly disjoint from \(k'\) over \(k\); and
    \item there is a point $(x_w)_{w \in \Omega_K} \in X(\A_K)$ with
    \[ (z_v)_v + r \tilde{x} - N_{K/k}^\Omega
    ((x_w)_w) \in \mathcal{A}_{n, S}.\]
\end{itemize}

We now claim that $y_{n, S}\coloneq- r \tilde{x} + N_{K/k}^\Omega((x_w)_w) \in \RefinedObs{X}^{\Br}$.
Note that $N_{K/k}^\Omega((x_w)_w) \in Z_{0, \Omega}(X)^{\Br}$: indeed, $N_{K/k}^\Omega((x_w)_w) = (z_v)_v + r \tilde{x} + (a_v)_v$ for some $(a_v)_v \in \mathcal{A}_{n, S}\subset \mathcal{A}_{n_0, S_0} \subset Z_{0, \Omega}(X)^{\Br}$. Moreover, $(z_v)_v \in  Z_{0, \Omega}(X)^{\Br}$, and $r \tilde{x} \in Z_{0, \Omega}(X)^{\Br}$ since $\tilde{x}$ is a global point. Since $Z_{0, \Omega}(X)^{\Br}$ is a group, it follows that  $N_{K/k}^\Omega((x_w)_w) \in Z_{0, \Omega}(X)^{\Br}$.

Furthermore, $(x_w)_w \in X(\A_K)^{\Br}$: indeed, by assumption $(ii)$, we can write any $\gamma \in \Br(X_K)/\Br_0(X_K)$ as $\gamma = \alpha_K + \beta$, where $\alpha_K \coloneq \alpha \otimes_k K$ for some $\alpha \in \Br(X)$ and $\beta \in \Br_0(X_K)$. By standard properties of the Brauer--Manin pairing we have 
\[ \langle  (x_w)_w, \gamma \rangle_{BM} = \langle  (x_w)_w, \alpha_K\rangle_{BM} = \langle  N_{K/k}^\Omega((x_w)_w), \alpha \rangle_{BM} = 0,\]
since $N_{K/k}^\Omega((x_w)_w) \in Z_{0, \Omega}(X)^{\Br}$. Hence,  $(x_w)_w \in X(\A_K)^{\Br}$ and thus, by definition, $N_{K/k}^\Omega((x_w)_w)  \in \RefinedObs{X}^{\Br}$.

Finally, since $Z_0(X) \subset \RefinedObs{X}^{\Br}$, we have $r \tilde{x} \in  \widetilde{Z}_{0, \Omega}(X)^{\Br}$, and since $\RefinedObs{X}^{\Br}$ is a group, we have $y_{n, S} = - r \tilde{x} + N_{K/k}^\Omega((x_w)_w) \in  \RefinedObs{X}^{\Br}$, as required. 
\end{proof}

\begin{lemma} 
\label{lem:harconn}
Let $X$ be a smooth, proper, geometrically integral variety over a number field $k$. Then $\RefinedObs{X}^{\Br} = \RefinedObs{X}^{\conn}$.
\end{lemma}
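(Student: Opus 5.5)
The plan is to compare the two refined sets term by term. By Definition~\ref{defn:refined-obstruction}, both $\RefinedObs{X}^{\Br}$ and $\RefinedObs{X}^{\conn}$ are images under the same homomorphism $\refrec_{X,\Omega}$ of $\bigoplus_{L/k}\Z[\mathscr{X}^{(L)}]$. For the Brauer--Manin set, $\mathscr{X}^{(L)}=X(\A_L)^{\Br}$. For the connected descent set, $\mathscr{X}^{(L)}=X(\A_L)^{\conn}$, and here, as in Examples~\ref{example:obstruction-sets-main}\ref{ex:part:descent}, the intersection runs over connected linear algebraic $L$-groups and $X_L$-torsors under them. It therefore suffices to prove that $X(\A_L)^{\Br}=X(\A_L)^{\conn}$ as subsets of $X(L_\Omega)$ for every finite extension $L/k$. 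The free abelian groups $\Z[X(\A_L)^{\Br}]$ and $\Z[X(\A_L)^{\conn}]$ then coincide as subgroups of $Z_{0,\Omega}(X_L)$, and applying $\refrec_{X,\Omega}$ to their direct sum gives the claimed equality.

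For a fixed $L$, the variety $X_L$ is smooth, proper and geometrically integral over the number field $L$, so Harari's theorem \cite[Th\'{e}or\`{e}me 2]{Harari02} applies over $L$. It gives $X_L(\A_L)^{\Br}=X_L(\A_L)^{\conn}$, where the right-hand side is the set~\eqref{eq:conn desc pts} with $k$ replaced by $L$. Two points need checking here.

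\begin{enumerate}[(a)]
\item Harari's result is stated with the full Brauer group $\Br(X_L)$. It uses the fact that every Brauer class comes from a torsor under $\PGL_n$, which is connected, together with descent under tori for the algebraic part. Since $X$ is proper, $X(\A_L)=X(L_\Omega)$ and $\Br_{\mathrm{nr}}(X_L)=\Br(X_L)$, so no subtlety about unramified classes or local versus adelic points arises.
\item The sets being compared must be literally the same. The descent set used in the refined definition is the intersection over all connected linear algebraic groups over $L$ and all $X_L$-torsors $f$ of $\bigcup_{\sigma\in\H^1(L,G)}f^\sigma(Y^\sigma(\A_L))$. This matches Harari's definition exactly, because $X(\A_L)^f$ was defined in Definition~\ref{defn:f-descent_pt} in this way.
\end{enumerate}

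The main obstacle I expect is only this bookkeeping: making sure the refined connected descent set is built from torsors over each $L$, and not only from base changes of torsors defined over $k$, so that Harari's theorem over $L$ applies verbatim. With the definition in Examples~\ref{example:obstruction-sets-main}\ref{ex:part:descent}, where groups and torsors are taken over $L$, this holds by construction, and the lemma follows immediately.
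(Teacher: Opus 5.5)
Your proposal is correct and is essentially the paper's proof. For each finite extension $L/k$, apply Harari's Th\'{e}or\`{e}me 2 to $X_L$ to get $X(\A_L)^{\Br} = X(\A_L)^{\conn}$, with groups and torsors taken over $L$ as in Examples~\ref{example:obstruction-sets-main}; since both refined sets are images under the same map $\refrec_{X,\Omega}$, they coincide.
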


\begin{proof} 
Under our assumptions on $X$, we have by \cite[Th\'{e}or\`{e}me 2]{Harari02} that $X(\A_L)^{\Br} = X(\A_L)^{\conn}$, for any finite extension $L/k$. Hence, we immediately conclude from the definitions that
\begin{align*}
\RefinedObs{X}^{\Br} &= \refrec_{X, \Omega}\left(\bigoplus_{L/k \ \textrm{finite}}\Z[X(\A_L)^{\Br}]\right)    \\ &=  \refrec_{X, \Omega}\left(\bigoplus_{L/k \ \textrm{finite}}\Z[X(\A_L)^{\conn}]\right)  \\ &= \RefinedObs{X}^{\conn}. \qedhere
\end{align*}
\end{proof}

\begin{lemma} 
\label{lem:1}
Let $X$ be a smooth, quasi-projective, geometrically integral variety over a number field $k$. Then $\RefinedObs{X}^{\conn} \subset Z_{0, \Omega}(X)^{\conn}$.
\end{lemma}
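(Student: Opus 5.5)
Since $Z_{0,\Omega}(X)^{\conn}$ is an intersection of the groups $Z_{0,\Omega}(X)^f$, and $\RefinedObs{X}^{\conn}$ is the image of a group homomorphism, it suffices to fix a connected linear algebraic $k$-group $G$ and an $X$-torsor $f\colon Y\to X$ under $G$, and show $\RefinedObs{X}^{\conn}\subset Z_{0,\Omega}(X)^f$. Each $Z_{0,\Omega}(X)^f$ is a subgroup: it is the image of the homomorphism $\rec^\Omega_f$ defined on a direct sum of groups. By linearity of $\refrec_{X,\Omega}=\sum_L N^\Omega_{L/k}$, we therefore reduce to a single generator. We fix a finite extension $L/k$ and an adelic point $(x_w)_w\in X(\A_L)^{\conn}$, and we must show $N^\Omega_{L/k}((x_w)_w)\in Z_{0,\Omega}(X)^f$.

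We proceed as follows. The base change $f_L\colon Y_L\to X_L$ is an $X_L$-torsor under the connected $L$-group $G_L$. By the definition of $X(\A_L)^{\conn}$ (taken over $L$, as in Examples~\ref{example:obstruction-sets-main}\ref{ex:part:descent}), we have $(x_w)_w\in X(\A_L)^{f_L}$. Hence there exist $\sigma\in\H^1(L,G)$ and $(y_w)_w\in Y_L^\sigma(\A_L)$ with $f_L^\sigma(y_w)=x_w$ for all $w\in\Omega_L$. We then view $(y_w)_w$ as an element of $Z_{0,\Omega}(Y_L^\sigma)$, each $y_w$ being a degree-one 0-cycle on $(Y^\sigma_L)_{L_w}$, and apply $\rec^\Omega_f$ to it. By definition,
\[
\rec_f^\Omega((y_w)_w)=N^\Omega_{L/k}\bigl(f^\sigma_{L,*}((y_w)_w)\bigr)=N^\Omega_{L/k}((x_w)_w),
\]
since pushforward of a rational point is the image point (degree one, with residue field $L_w$). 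This places the generator in $Z_{0,\Omega}(X)^f$, as required.

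The only point requiring care is bookkeeping. First, the twist $Y^\sigma_L$ used in Definition~\ref{defn:non-refined-f-descent} is the twist of the base change of the $k$-torsor $f$ by a class in $\H^1(L,G)$. This coincides with the twist of $f_L$ appearing in $X(\A_L)^{f_L}$, so the two constructions agree. Second, one should check that the set of torsors over $L$ used in defining $X(\A_L)^{\conn}$ includes base changes of $k$-torsors. It does, since $G_L$ is connected whenever $G$ is, and $[f_L]\in\H^1(X_L,G_L)$. Given these two checks, I expect no real obstacle.
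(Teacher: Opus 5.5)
Your proposal is correct and follows the paper's proof: reduce by additivity to a single generator $N^\Omega_{L/k}((x_w)_w)$ with $(x_w)_w\in X(\A_L)^{\conn}$, then use that $f_L$ is a torsor under the connected group $G_L$. This lifts $(x_w)_w$ to some $(y_w)_w\in Y^\sigma_L(\A_L)$, and $\rec^\Omega_f((y_w)_w)=N^\Omega_{L/k}((x_w)_w)$. Your two bookkeeping checks, on twists and on base change preserving connectedness, are the same observations the paper relies on.
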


\begin{proof}
Let $z \in \RefinedObs{X}^{\conn}$. By the definition of the refined descent set, we can write 
\[ z = \sum_{i=1}^r \sum_{j_i= 1}^{r_i} n_{i, j_i} N^{\Omega}_{L_i/k}
(x_{i, j_i}),\]
for some finite extensions $L_i/k$, some integers $n_{i, j_i}$, and some adelic points $x_{i, j_i} \in X(\A_{L_i})^{\conn}$. To show that $z \in Z_{0, \Omega}(X)^{\conn}$, it suffices to show that, for any connected linear algebraic group $G$ over $k$ and for any $X$-torsor $f: Y \to X$ under $G$, we have $z \in Z_{0, \Omega}(X)^f$. Moreover, since each $Z_{0, \Omega}(X)^f$ is an abelian group, it suffices to show that $N^{\Omega}_{L_i/k}(x_{i, j_i}) \in Z_{0, \Omega}(X)^f$ for all $X$-torsors $f$ under connected linear algebraic groups over $k$ and all $x_{i, j_i} \in X(\A_{L_i})^{\conn}$. So fix an $X$-torsor $f: Y \to X$ under a connected linear algebraic group $G$ and fix $x_{i, j_i} \in X(\A_{L_i})^{\conn}$. By definition of $X(\A_{L_i})^{\conn}$, there exists some $\sigma \in H^1(L_i, G)$ such that $x_{i, j_i} \in f_{L_i}^{\sigma}(Y_{L_i}^{\sigma}(\A_{L_{i}}))$, say $x_{i, j_i} = f_{L_i}^{\sigma}((y_w)_w)$ with $(y_w)_w \in Y_{L_i}^{\sigma}(\A_{L_{i}})$, where we have used the fact that the base-change $f_{L_i}: Y_{L_i} \to X_{L_i}$ is a torsor under $G_{L_i}$, with $G_{L_i}$ still a connected linear algebraic group. Hence, viewing $x_{i, j_i}$ and $(y_w)_w$ as $\Omega_{L_i}$-local 0-cycles of degree 1 in $Z_{0, \Omega}(X_{L_i})$ and $Z_{0, \Omega}(Y^{\sigma}_{L_i})$, respectively, we have \[N^{\Omega}_{L_i/k}(x_{i, j_i}) = N^{\Omega}_{L_i/k}(f_{L_i, \ast}^{\sigma}((y_w)_w)) = \rec_f((y_w)_w).\] By definition of $Z_{0, \Omega}(X)^f$, this implies that $N^{\Omega}_{L_i/k}(x_{i, j_i}) \in Z_{0, \Omega}(X)^f$, as required.
\end{proof}

\begin{remark}\label{rem:1}
In the proof of Lemma \ref{lem:1}, we only use the fact that connected linear algebraic groups remain so after base extensions. This means that if we consider any collection $\mathscr{G}$ of linear algebraic groups over $k$ that is similarly `closed under base extensions', a similar proof shows that $\widetilde{Z}_{0, \Omega}(X)^{\mathscr{G}} \subset Z_{0, \Omega}(X)^{\mathscr{G}}$. Examples of families $\mathscr{G}$ satisfying this property include all linear algebraic groups, all connected linear algebraic groups (as in Lemma \ref{lem:1}), all commutative linear algebraic groups, all tori, all groups of multiplicative type, all solvable linear algebraic groups, all finite linear algebraic groups, and many other important families.
\end{remark}

\begin{proof}[Proof of Theorem \ref{thm:conndense}]
By Lemmas \ref{lem:harconn} and \ref{lem:1}  above, together with \cite[Remark 5.8]{Zhang-0cycles-25}), we have the chain of inclusions
\[
\RefinedObs{X}^{\Br} = \RefinedObs{X}^{\conn} \subset Z_{0, \Omega}(X)^{\conn} \subset Z_{0,\Omega}(X)^{\Br}
\]
and thus the result follows from Proposition~\ref{prop:wudhiq}.
\end{proof}

We now show that Theorem~\ref{thm:conndense} applies when $X$ is either rationally connected or a K3 surface.

\begin{proposition}
\label{prop:notvacuous}
Conditions~\ref{part:conndense-cond1} and~\ref{part:conndense-cond2} of Theorem~\ref{thm:conndense} are satisfied for a smooth, proper, geometrically integral variety \(X\) over a number field $k$ if:
\begin{enumerate}[(a)]
    \item\label{ratconn} the N\'{e}ron--Severi group $\NS(X_{\ol{k}})$ is torsion-free and $\H^1(X_{\ol{k}},\mathcal{O}_{X_{\ol{k}}}) = \H^2(X_{\ol{k}},\mathcal{O}_{X_{\ol{k}}}) = 0$ (this is the case if $X$ is rationally connected, for example), or
    \item\label{K3} $X$ is a K3 surface.
\end{enumerate}
\end{proposition}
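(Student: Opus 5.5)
Both conditions reduce to known facts about the finiteness of $\Br(X_K)/\Br_0(X_K)$ and about how it behaves under finite base change. I treat condition~(ii) first, since condition~(i) then follows from it.

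\textbf{Condition (ii).} In case~(a), the hypotheses $\H^1(X_{\ol{k}},\mathcal{O})=\H^2(X_{\ol{k}},\mathcal{O})=0$ imply that $\pic(X_{\ol{k}})=\NS(X_{\ol{k}})$ is a free, finitely generated abelian group. They also imply that $\Br(X_{\ol{k}})$ is finite: the transcendental lattice vanishes, so $\Br(X_{\ol{k}})$ is isomorphic to the torsion subgroup of $\H^3(X_{\ol{k}},\Z_\ell(1))$, which is finite. The Galois action on $\pic(X_{\ol{k}})$ and on $\Br(X_{\ol{k}})$ therefore factors through a finite quotient $\Gal(k_1/k)$. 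I take $k'$ to be a finite Galois extension containing $k_1$, chosen large enough to split every algebraic and transcendental class. This $k'$ can be taken independent of $d$.

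Now let $K/k$ be linearly disjoint from $k'$. Then $\Gal(\ol{k}/K)$ surjects onto $\Gal(k'/k)$, so $\pic(X_{\ol{k}})^{\Gal_K}=\pic(X_{\ol{k}})^{\Gal_k}$, and similarly for $\Br(X_{\ol{k}})$. The algebraic part is $\Br_1(X_K)/\Br_0(X_K)\cong \H^1(K,\pic(X_{\ol{k}}))$. Inflation from $\Gal(k'K/K)\cong\Gal(k'/k)$ identifies this group with $\H^1(\Gal(k'/k),\pic(X_{\ol{k}}))$, because $\H^1(k'K,\pic)=\mathrm{Hom}(\Gal,\text{free group})=0$. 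Hence restriction from $k$ to $K$ is an isomorphism on algebraic Brauer groups. For the transcendental quotient $\Br(X_K)/\Br_1(X_K)\hookrightarrow \Br(X_{\ol{k}})^{\Gal_K}$, I compare the Hochschild--Serre filtrations over $k$ and over $K$ and apply the five lemma. The cokernel term $\H^2(K,\pic)$ is again controlled by inflation from the same group $\Gal(k'/k)$, which gives surjectivity.

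In case~(b), $X$ is a K3 surface. By Skorobogatov--Zarhin, $\Br(X_{\ol{k}})^{\Gal_k}$ and $\Br(X)/\Br_0(X)$ are finite. Moreover, for any $d$ there is a finite $k'$ such that $\Br(X_K)/\Br_0(X_K)$ is unchanged for every $K$ of degree $d$ linearly disjoint from $k'$. This is the result of Orr--Skorobogatov--Zarhin (or Ieronymou / Liang--Zhang style) on uniform boundedness of Brauer groups in extensions of bounded degree, and I would cite it directly. Ieronymou's lemma gives exactly this: with $\NS$ torsion-free, $\Br(X_K)=\Br(X)$ modulo constants once $K\cap k'=k$.

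\textbf{Condition (i).} The pairing of $Z_{0,\Omega}(X)$ with $\Br(X)/\Br_0(X)$ depends only on the finite quotient. When $\Br(X)/\Br_0(X)$ is finite, say killed by $N$, there is for each class $\alpha$ a finite set $S_\alpha$ of places outside which $\alpha$ pairs trivially with all local $0$-cycles. This holds because a smooth proper model has $\alpha$ unramified, so its evaluation lands in $\Br$ of the unramified local rings, and corestriction then gives invariant zero. Moreover, the evaluation factors through $\CH_0(X_{k_v})/N$. So $\mathcal{A}_{N,S}$, with $S=\bigcup S_\alpha$, is a subgroup contained in $Z_{0,\Omega}(X)^{\Br}$. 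Since $Z_{0,\Omega}(X)^{\Br}$ is a union of translates of this open subgroup, it is open. Finiteness of $\Br(X)/\Br_0(X)$ holds in case~(a) from the finiteness of $\H^1(k,\pic)$ and of $\Br(X_{\ol{k}})$, and in case~(b) by Skorobogatov--Zarhin.

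\textbf{Main obstacle.} The hard step is the transcendental part of condition~(ii), particularly for K3 surfaces. There I would rely on the uniform boundedness results rather than reprove them.
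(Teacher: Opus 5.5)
Your structure is the paper's. Finiteness of $\Br(X)/\Br_0(X)$ reduces (i) to finding one $\mathcal{A}_{n,S}$ orthogonal to finitely many classes, using a finite set of bad places and the factorization of local evaluation through $\CH_0(X_{k_v})$. Condition (ii) is Liang's Proposition~3.1.1 in case (a) and Ieronymou's result in case (b). There is, however, one step in (i) that fails as written.

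\textbf{The gap in (i).} You take $N$ to be the exponent of $\Br(X)/\Br_0(X)$ and claim that local evaluation factors through $\CH_0(X_{k_v})/N$. Evaluation is only defined on a representative $\alpha\in\Br(X)$. For such $\alpha$ you only know $N\alpha=\beta_X$ for some $\beta\in\Br(k)$, and $\beta$ need not be zero. If $z_v$ is rationally equivalent to $Ny_v$, you get $\alpha(z_v)=N\,\alpha(y_v)=\deg(y_v)\,\mathrm{inv}_v(\beta)$, which need not vanish.

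The fix is what the paper does. Since $\Br(X)$ is torsion, let $n$ be the order of $\alpha$ in $\Br(X)$ itself; then $\alpha(z_v)=n\,\alpha(y_v)=0$ for $v\in S$. For your single subgroup, take $N$ to be the lcm of these orders over a finite set of representatives of $\Br(X)/\Br_0(X)$. Classes in $\Br_0(X)$ are harmless, because an element of $Z_{0,\Omega}(X)$ has the same degree at every place and the invariants of a global class sum to zero.

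\textbf{Condition (ii), case (a).} The paper simply cites Liang here; your Hochschild--Serre sketch reproves it, and correctly gives $k'$ independent of $d$. The transcendental step needs to be stated precisely, though: it is not true that $\H^2(K,\pic(X_{\ol{k}}))$ is controlled by inflation from $\Gal(k'K/K)$. What you need is the following.
\begin{enumerate}[(1)]
\item Choose $k'$ so that $\Br(X_{k'})\to\Br(X_{\ol{k}})$ is surjective. This is possible because $\Br(X_{\ol{k}})$ is finite.
\item Then $d_2$ vanishes over $k'$. So for $b\in\Br(X_{\ol{k}})^{\Gal(\ol{k}/k)}$, the classes $d_2^k(b)$ and $d_2^K(b)$ lie in the images of inflation from $\H^2(\Gal(k'/k),\pic(X_{\ol{k}}))$ and $\H^2(\Gal(k'K/K),\pic(X_{\ol{k}}))$ respectively.
\item These inflation maps are injective, because $\Gal(\ol{k}/k')$ and $\Gal(\ol{k}/k'K)$ act trivially on the torsion-free group $\pic(X_{\ol{k}})$, so their $\H^1$ vanishes.
\item Restriction identifies the two inflated groups via $\Gal(k'K/K)\cong\Gal(k'/k)$.
\end{enumerate}
Together these give $\ker d_2^K\subseteq\ker d_2^k$, which is exactly what your five-lemma argument requires.

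\textbf{Condition (ii), case (b).} Keep the restriction $[K:k]=d$ in your final sentence, since the field $k'$ in Ieronymou's result depends on $d$.
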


\begin{proof}
In both cases~\ref{ratconn} and~\ref{K3}, $\Br(X)/\Br_0(X)$ is finite, see~\cite[Proposition~3.1.1]{Liang-Arithmetic-13} and~\cite[Theorem~1.2]{SkoroZarhinFinite}, respectively. Since the Brauer--Manin pairing factors through $\Br(X)/\Br_0(X)$, Condition~\ref{part:conndense-cond1} will follow if we can show that for each $\alpha\in \Br(X)$, the group $Z_{0,\Omega}(X)^\alpha$ is open with respect to the weak approximation topology. For this, it is enough to show the existence of $n\in\Z_{>0}$ and a finite set $S\subset \Omega_k$ such that $\mathcal{A}_{n, S}\subset Z_{0,\Omega}(X)^\alpha$ (see Section~\ref{subsec:prelim-weak-approximation} for the definition of $\mathcal{A}_{n, S}$). 

Fix $\alpha\in \Br(X)$ and let $n$ be the order of $\alpha$. By~\cite[\S 5.2]{Sko-Torsors}, there exists a finite set of places $S\subset \Omega_k$ such that $\alpha(x_v)=0$ for all $v\notin S$ and all $x_v\in (X_{k_v})_0$. Let $z=(z_v)_v\in Z_{0,\Omega}(X)$ and write $z_v=\sum_{\substack{x_v \in (X_{k_v})_0}}n_{x_v} x_v$. Then
\begin{equation}\label{eq:justS}
 \lr{ z, \alpha}_{BM}  = \sum_{v\in S}\sum_{\substack{x_v \in (X_{k_v})_0}}
  n_{x_v} \text{inv}_v(\mathrm{cores}_{k_v(x_v)/k_v}\alpha(x_v)).
\end{equation}
Now suppose that $z\in \mathcal{A}_{n, S}$, so that $z_v$ maps to $0$ in $\CH_0(X_{k_v})/n$ for all $v\in S$. Then for each $v\in S$, there exists $y_v\in Z_0(X_{k_v})$ such that $z_v$ is rationally equivalent to $ny_v$. 
Since the map 
\begin{align*}
\alpha: Z_0(X_{k_v})&\longrightarrow \Q/\Z\\
\sum_{\substack{x_v \in (X_{k_v})_0}}m_{x_v} x_v  &\longmapsto \sum_{\substack{x_v \in (X_{k_v})_0}}m_{x_v} \text{inv}_v(\mathrm{cores}_{k_v(x_v)/k_v}\alpha(x_v))
\end{align*} 
is a group homomorphism that factors via the Chow group $\CH_0(X_{k_v})$ (see~\cite{CT-Chow-93}), we have \[\alpha(z_v)=n\alpha(y_v)=0\] 
for all $v\in S$, since $\alpha$ has order $n$. Combining this with~\eqref{eq:justS} shows that $\lr{ z, \alpha}_{BM}=0$, and hence $\mathcal{A}_{n, S}\subset Z_{0,\Omega}(X)^\alpha$, as desired. Hence, Condition~\ref{part:conndense-cond1} of Theorem~\ref{thm:conndense} is satisfied. Condition~\ref{part:conndense-cond2} follows from~\cite[Proposition~3.1.1]{Liang-Arithmetic-13} and~\cite[Proof of Theorem~1.2]{Ieronymou}.
\end{proof}

\begin{remark}
It is likely that a similar proof using the Suslin homology group of degree $0$ might yield a similar result to Theorem~\ref{thm:conndense} for $X$ quasi-projective but not necessarily proper.
\end{remark}

\begin{remark}
It is probable that the strategy used to prove Theorem~\ref{thm:conndense} could be used to prove other similar results, exploiting known identities between different types of obstruction sets for rational points. For example, using the well-known equality $X(\A_L)^{\Br_1} = X(\A_L)^{\textrm{mult type}}$ where $X(\A_L)^{\textrm{mult type}}$ denotes the descent set under all linear algebraic groups of multiplicative type (see e.g.\ \cite[8.12]{Harari_Skorobogatov_2013}), one could hope to prove that $\overline{Z_{0, \Omega}(X)^{\textrm{mult type}}} = Z_{0, \Omega}(X)^{\Br_1}$ by establishing a chain of inclusions $\widetilde{Z}_{0, \Omega}(X)^{\Br_1} = \widetilde{Z}_{0, \Omega}(X)^{\textrm{mult type}} \subset Z_{0, \Omega}(X)^{\textrm{mult type}} \subset  Z_{0, \Omega}(X)^{\Br_1}$ and a result of the form $\overline{\widetilde{Z}_{0, \Omega}(X)^{\Br_1}} = Z_{0, \Omega}(X)^{\Br_1}$. 
\end{remark}

\section{A Corwin--Schlank style refined obstruction for 0-cycles}
\label{sec:corwin-schlank}

The refined obstruction sets in Definition~\ref{defn:refined-obstruction} can be generalised further to encompass analogous generalisations for rational points. We mention one such generalisation here, with the goal of extending the obstruction sets for rational points introduced in \cite{CorwinSchlank} to 0-cycles.
\begin{definition}
\label{defn:generalised-refined-obstruction}
For each finite extension $L/k$, let
\begin{equation}
\label{eq:XL-decomposition}
    X_L = \bigcup_{i=1}^{r_L} X^{(L)}_{i}
\end{equation}
denote a finite decomposition of \(X_L\) into \(L\)-schemes. For each $X^{(L)}_{i}$, define an obstruction set $\mathscr{X}^{(L)}_{i}$ with $X^{(L)}_{i}(L) \subset \mathscr{X}^{(L)}_{i} \subset X^{(L)}_{i}(L_{\Omega})$. 
Define the \emph{refined obstruction set associated to the collection $\{\{\mathscr{X}^{(L)}_{i}\}_{i=1}^{r_L}\}_{L/k}$} to be
    \[ 
\RefinedObs{X}^{\{\{\mathscr{X}^{(L)}_{i}\}_{i=1}^{r_L}\}_{L/k}} \coloneq\refrec_{X, \Omega}\left(\bigoplus_{L/k \ \textrm{finite}} \bigoplus_{i = 1}^{r_L} \Z[\mathscr{X}_i^{(L)}]\right)  \subset Z_{0,\Omega}(X),
\]
where $\refrec_{X, \Omega}\left( \bigl( (\sum_{l=1}^{t_i} n_{x_l} x_l)_{i=1}^{r_{L_j}} \bigr)_{j=1}^s \right) = \sum_{j=1}^s \sum_{i=1}^{r_{L_j}} \sum_{l=1}^{t_i} n_{x_l} N^{\Omega}_{L_j/k}(x_l)$, with $x_l \in \mathscr{X}^{(L_j)}_i$ and $n_{x_l} \in \Z$.
For $\delta\in\Z$, we define $\RefinedObsDeg{X}{\delta}^{\{\{\mathscr{X}^{(L)}_{i}\}_{i=1}^{r_L}\}_{L/k}} =\RefinedObs{X}^{\{\{\mathscr{X}^{(L)}_{i}\}_{i=1}^{r_L}\}_{L/k}}\cap Z_{0,\Omega}^\delta(X) $.
\end{definition} 

\begin{remark} Note that in \eqref{eq:XL-decomposition}, the decomposition can be anything from an open covering to a disjoint union: we impose no restrictions on the intersections of the sets \(X^{(L)}_i\). 
Furthermore, since for all $L/k$ we have $X(L) = \bigcup_{i=1}^{r_L} X_i^{(L)}(L)$, one can check that $Z_0(X) \subset \RefinedObs{X}^{\{\{\mathscr{X}^{(L)}_{i}\}_{i=1}^{r_L}\}_{L/k}}$.
\end{remark}

In this section, we  need the following additional notation. 

\begin{notation}\label{not:11}
For any non-empty (not necessarily finite) set of places 
$S \subset \Omega = \Omega_k$, we consider the projection 
$\textrm{pr}_{S}: \prod_{v \in \Omega} X(k_v) \to \prod_{v \in S} X(k_v)$. For an obstruction $\operatorname{obs}$, let $X(\A_{k, S})^{\operatorname{obs}} \coloneq \textrm{pr}_{S} \left( X(\A_{k})^{\operatorname{obs}}\right)$, $Z_{0, S}(X)\coloneq \prod_{v \in S} Z_0(X_{k_v})$, and define $\refrec_{X, S}$ similarly to $\refrec_{X, \Omega}$ (Definition~\ref{defn:refinedlocalrecombiningmap}) but restricted to the places above $v \in S$.
For any finite extension $L/k$ and any set of places $S \subset \Omega_k$, we denote by $S_L$ the set of places of $L$ above the places in $S$. We say that a triple $(\operatorname{obs}, S, k)$ where $\operatorname{obs}$ is an obstruction, $S$ is a non-empty subset of $\Omega_k$, and $k$ is a number field is an \emph{obstruction datum}. We say that a variety $V$ over $k$ satisfies \emph{very strong approximation (VSA) with respect to the obstruction datum} $(\operatorname{obs}, S, k)$ if $V(\A_{k, S})^{\operatorname{obs}} = V(k)$. 
\end{notation}

Recall the definition of finite descent obstruction sets for rational points in \eqref{eq:etobstruction}. In {\cite[Corollary 6.6 and Theorem 4.3]{CorwinSchlank}}, assuming the validity of the Section Conjecture (\cite[Conjecture 6.4]{CorwinSchlank}), Corwin and Schlank prove a very strong result on the arithmetic of rational points: for any smooth, geometrically integral variety $X$ over a number field $k$ and any non-empty (not necessarily finite) set $S$ of finite places of $k$, there is a finite open cover $X = \bigcup_{i=1}^s U_i$ such that each $U_i$ satisfies VSA with respect to the obstruction datum $(\et, S, k)$.
Since Definition~\ref{defn:generalised-refined-obstruction}
is flexible enough to incorporate obstructions in the style of Corwin--Schlank, it is natural to use it to generalise their work to the context of 0-cycles.
Assuming the validity of the Section Conjecture over all finite extensions $L/k$, we are able to prove the following analogue for 0-cycles of \cite[Corollary 6.6]{CorwinSchlank}.

\begin{theorem}\label{thm:cs-thm1}
Let $X$ be a smooth, quasi-projective, geometrically integral variety over a number field $k$. Fix a non-empty (not necessarily finite) set of finite places, \(S \subset \Omega_k\). Assume that the Section Conjecture as in \cite[Conjecture 6.4]{CorwinSchlank} holds for any finite extension $L/k$. For each finite extension \(L/k\), write $X_L = \bigcup_{i=1}^{r_L} X_i^{(L)}$ for an open cover satisfying VSA with respect to obstruction datum $(\et, S_L, L)$ guaranteed by \cite[Corollary 6.6]{CorwinSchlank}, i.e.\ for each $i = 1, ..., r_L$, $X_i^{(L)}(\A_{L, S_L})^{\et} = X_i^{(L)}(L)$.
Consider the refined obstruction set associated to the collection $\{\{X^{(L)}_{i}(\A_{L, S_L})^{\et}\}_{i=1}^{r_L}\}_{L/k}$, that is, 
\[ 
\widetilde{Z}_{0, S}(X)^{\textrm{CS-}(\et, S, k)} \coloneq\refrec_{X, S}\left(\bigoplus_{L/k \ \textrm{finite}} \bigoplus_{i = 1}^{r_L} \Z[X^{(L)}_{i}(\A_{L, S_L})^{\et}]\right)  \subset Z_{0, S}(X).
\]
Then $Z_0(X) = \widetilde{Z}_{0, S}(X)^{\textrm{CS-}(\et, S, k)}$.  Moreover, for any degree $\delta \in \Z$, we have $Z^{\delta}_0(X) = \widetilde{Z}^{\delta}_{0, S}(X)^{\textrm{CS-}(\et, S, k)}$.
\end{theorem}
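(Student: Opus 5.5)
We prove the two inclusions separately, viewing $Z_0(X)$ inside $Z_{0,S}(X)$ via the diagonal map to the places in $S$.

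\emph{The inclusion $Z_0(X)\subset \widetilde{Z}_{0, S}(X)^{\textrm{CS-}(\et, S, k)}$.} Take a global 0-cycle. By Remark~\ref{rmk:rec surj} it is $\refrec_X$ of an element of $\bigoplus_L \Z[X(L)]$, that is, a $\Z$-linear combination of $N_{L/k}(x)$ with $x\in X(L)$. Each such $x$ lies in some $X_i^{(L)}(L)$, because the $X_i^{(L)}$ cover $X_L$. Moreover $X_i^{(L)}(L)\subset X_i^{(L)}(\A_{L,S_L})^{\et}$, since global points lie in the finite descent set and projection to $S_L$ preserves this. The $S$-analogue of the compatibility \eqref{eq:locglobnorm} then says that the image of $N_{L/k}(x)$ in $Z_{0,S}(X)$ equals $\refrec_{X,S}$ of the diagonal image of $x$. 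This gives the inclusion.

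\emph{The reverse inclusion.} Let $z=\refrec_{X,S}(\xi)$ with $\xi=\sum_j\sum_i\sum_l n_{x_l}x_l$ and $x_l\in X_i^{(L_j)}(\A_{L_j,S_{L_j}})^{\et}$. By the VSA hypothesis supplied by \cite[Corollary 6.6]{CorwinSchlank}, $X_i^{(L_j)}(\A_{L_j,S_{L_j}})^{\et}=X_i^{(L_j)}(L_j)$. So each $x_l$ is the $S_{L_j}$-image of a global point $P_l\in X(L_j)$. Then $z$ is the image in $Z_{0,S}(X)$ of the global cycle $\sum n_{x_l}N_{L_j/k}(P_l)\in Z_0(X)$, again by compatibility of $N^{S}$ with global norms.

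\emph{The degree-$\delta$ statement.} Since $S\neq\emptyset$, the map $Z_0(X)\to Z_{0,S}(X)$ preserves degree at any $v\in S$, so intersecting both sides with degree $\delta$ gives $Z^{\delta}_0(X)=\widetilde{Z}^{\delta}_{0,S}(X)^{\textrm{CS-}(\et, S, k)}$.

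The main obstacle is interpreting the identification. The map $Z_0(X)\to Z_{0,S}(X)$ must be injective for the equality to be literal. It is: for a finite place $v\in S$, a nonzero global 0-cycle remains nonzero after base change to $k_v$, because distinct closed points of $X$ give disjoint sets of closed points of $X_{k_v}$ with positive multiplicities. A second point to check is that the identification of $X_i^{(L)}(L)$ with its image in $X_i^{(L)}(\A_{L,S_L})$ is injective, so that the VSA equality means what we use it for. This holds since $S_L\neq\emptyset$. Everything else is bookkeeping with the linear maps $N^S_{L/k}$.
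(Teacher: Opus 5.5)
Your proof is correct and is essentially the paper's argument. The paper's proof is a single line: the VSA equality $X_i^{(L)}(\A_{L,S_L})^{\et} = X_i^{(L)}(L)$ for all $i$ and $L$ makes the statement immediate. Your two inclusions (surjectivity of $\refrec_X$, the covering property, compatibility of $N^S_{L/k}$ with $N_{L/k}$), together with your degree and injectivity checks, simply spell out the bookkeeping that the paper leaves implicit.
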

\begin{proof} This is immediate since $X_i^{(L)}(\A_{L, S_L})^{\et} = X_i^{(L)}(L)$ for all $i$ and all $L$. 
\end{proof}

In fact, we can prove a slightly stronger statement, namely that starting from any finite affine open cover of $X$ over $k$, assuming the Section Conjecture holds over all $L/k$, we can refine the finite open cover in such a way that, when base-changing it to any finite extension $L/k$, the base-changed cover automatically also satisfies VSA over $L$. This means that we gain control of the open cover satisfying VSA over all $L/k$ by simply considering a carefully refined open cover over $k$.

\begin{proposition} \label{prop:onecoverforall} Let $X$ be a smooth, quasi-projective, geometrically integral variety over a number field $k$. Let $S$ be a non-empty (not necessarily finite) set of finite places of $k$. Assume that the Section Conjecture as in \cite[Conjecture 6.4]{CorwinSchlank} holds for any finite extension $L/k$. Let $X = \bigcup_{i=1}^s U_i$ be a finite affine open cover of $X$. Then there exists an effectively computable open cover $X = \bigcup_{i=1}^s \bigcup_{j=1}^{r_i} U_{i,j}$ refining $\{ U_i\}_{i=1}^s$ such that, for any finite extension $L/k$ and any $i, j$, the base change $(U_{i,j})_L$ satisfies VSA with respect to the obstruction datum $(\et, S_L, L)$.    
\end{proposition}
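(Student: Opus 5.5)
We will follow the proof of \cite[Corollary 6.6]{CorwinSchlank} closely, but carry it out once over $k$ so that every ingredient is visibly stable under base change to any finite $L/k$. Recall the shape of their argument. Assuming the Section Conjecture, it shows that a smooth hyperbolic curve $C$ over a number field $K$ satisfies $C(\A_{K,S})^{\et}=C(K)$ for any non-empty set $S$ of finite places; this is the finite descent version of the Section Conjecture, as in \cite[Theorem 4.3]{CorwinSchlank}. It then reduces arbitrary varieties to this case by covering them with opens admitting fibrations whose fibres and base are built out of hyperbolic curves. Concretely, these are iterated elementary fibrations in the sense of Artin, i.e. \emph{good neighbourhoods}, with all fibres hyperbolic curves. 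The key point is that the finite descent set behaves well in such fibrations: the \'etale fundamental group of the total space is an extension of that of the base by that of the fibre. This lets one push VSA up the tower by induction on dimension.

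\textbf{Step 1: build the cover over $k$.} Starting from each affine $U_i$, we construct Artin good neighbourhoods explicitly over $k$. Choose generic linear projections after embedding $U_i$ in affine space; the genericity conditions are Zariski-open and can be checked algorithmically. Then shrink so that each open $U_{i,j}$ carries a tower
\[
U_{i,j}=V_n\to V_{n-1}\to\cdots\to V_1\to V_0=\spec k,
\]
in which each map $V_m\to V_{m-1}$ is an elementary fibration whose fibres are smooth affine curves of genus $g$ with $r$ punctures and $2g-2+r>0$. Puncturing further, if needed, makes the fibres hyperbolic. Finitely many such opens cover $U_i$ by the standard Artin argument, and each choice is made by explicit Zariski-open conditions. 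This gives effective computability.

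\textbf{Step 2: base change.} Elementary fibrations, smoothness, the hyperbolicity condition $2g-2+r>0$ and the open immersions are all geometric conditions. They are therefore preserved by $-\times_k L$. Hence $(U_{i,j})_L$ carries the base-changed tower, which is again a tower of hyperbolic elementary fibrations over $L$.

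\textbf{Step 3: conclude over each $L$.} For each finite $L/k$ we apply the Corwin--Schlank inductive argument over $L$ with the set of places $S_L$. By hypothesis the Section Conjecture holds over every finite extension of $L$, which is what is needed for fibres over residue fields of closed points. The fibration argument of \cite{CorwinSchlank} then yields
\[
(U_{i,j})_L(\A_{L,S_L})^{\et}=U_{i,j}(L).
\]

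The main obstacle is Step 3. We must check that the Corwin--Schlank fibration argument uses only the geometric properties of the tower, and no arithmetic choices tied to $k$ such as specific sections or rational points. We also need the Section Conjecture over $L$ and its extensions, rather than over $k$, to be exactly what the induction consumes. We expect this to hold because their proof runs uniformly over any number field once the tower is given. The remaining work is to verify that the tower itself, and not just its existence, is what they use.
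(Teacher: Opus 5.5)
\textbf{Gap in Step 3.} Step 3 carries the whole content of the proposition, and you defer it to an argument that \cite{CorwinSchlank} does not contain. Their open-cover result is not obtained by induction along Artin neighbourhoods using the homotopy exact sequence. The inputs they provide are \cite[Proposition 6.5]{CorwinSchlank}, which says that under the Section Conjecture $\P^1\setminus\{0,1,\infty\}$ satisfies VSA for $(\et,S,K)$, and the formal properties in \cite[Lemma 2.5]{CorwinSchlank}, which say that VSA passes to locally closed subvarieties and to products.

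Your Step 3 instead needs a new statement: that VSA for $(\et,S,K)$ ascends along an elementary fibration $V\to B$ with hyperbolic fibres. This is not a formality. Take $x\in V(\A_K)^{\et}$. VSA for $B$ only tells you that the $S$-components of the image of $x$ come from a single $b\in B(K)$. The components of $x$ at places outside $S$ need not lie on the fibre $V_b$, so you cannot feed $x$ into the curve case for $V_b$.

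You would have to produce a point of $V_b(\A_K)^{\et}$ with the same $S$-components. That means relating all finite torsors over $V_b$ (defined over $K$ and twisted by classes in $\H^1(K,F)$) to torsors over $V$. It also means modifying $x$ away from $S$ compatibly with all of them. The functoriality behind \cite[Lemma 2.5]{CorwinSchlank} moves VSA from an ambient variety down to a subvariety. It does not move VSA up from base and fibre to a total space. Nothing in your outline supplies this step, and saying you \emph{expect} the argument to run uniformly does not close it.

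\textbf{The missing idea.} No fibration structure is needed.
\begin{enumerate}
\item Embed each affine $U_i$ in $\A^n\subset(\P^1)^n$ and cover $\P^1$ by $\P^1\setminus\{\infty,0,1\}$ and $\P^1\setminus\{2,3,4\}$.
\item For each of the $2^n$ choices $\gamma$, let $U_\gamma$ be the preimage of $\gamma_1\times\cdots\times\gamma_n$. Each $U_\gamma$ is locally closed in $\prod_i\gamma_i$, being the pullback of a closed immersion followed by an open immersion. Both this and the covering property are visibly preserved by $-\times_k L$.
\item For any finite $L/k$, each $(\gamma_i)_L$ satisfies VSA for $(\et,S_L,L)$ by \cite[Proposition 6.5]{CorwinSchlank}. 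Hence $\prod_i(\gamma_i)_L$ does, and then so does $(U_\gamma)_L$, by \cite[Lemma 2.5]{CorwinSchlank}.
\end{enumerate}
This makes the uniformity in $L$ that you worry about automatic: the only arithmetic input over $L$ is VSA for two fixed opens of $\P^1_L$. It also gives effective computability for free. Your Step 1 would additionally need an explicit algorithm for producing Artin neighbourhoods, which you only sketch.
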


\begin{proof}
Fix $U \in \{ U_i \}_{i=1}^s$. Since $U$ is affine, we have a closed immersion $U \hookrightarrow \A_k^{n}$ for some $n$; moreover, we have a natural open immersion $\A_k^{n} \hookrightarrow (\P^1_k)^n$. Let \(\iota\) denote the composition
\[
\iota \colon U \hookrightarrow \A_k^{n} \hookrightarrow (\P^1_k)^n
\]
and, for \(i = 1,\ldots , n\), denote by \(\operatorname{pr}_i \colon (\P^1)^n \to \P^1\) the projection to the \(i\)-th component. 
Fix the finite open cover $\P^1_k = (\P^1_k \setminus\{\infty, 0, 1\}) \cup (\P^1_k \setminus \{2, 3, 4\})$. For any finite extension $L/k$, we also have $\P^1_L = (\P^1_L \setminus \{\infty, 0, 1\}) \cup (\P^1_L \setminus \{2, 3, 4\})$ and, since we are assuming that the Section Conjecture holds for $L/k$, by \cite[Proposition 6.5]{CorwinSchlank} both components of this open cover of $\P^1_L$ satisfy VSA with respect to the obstruction datum $(\et, S_L, L)$.  

For $i=1, ..., n$, we define $U^{(i), \P^1_k \setminus \{\infty, 0, 1\}}$ to be the pullback of the inclusion $\P^1_k \setminus \{\infty, 0, 1\} \hookrightarrow \P^1_k$ along $\operatorname{pr}_i \circ \iota$; similarly, we define $U^{(i), \P^1_k \setminus \{2,3,4\}}$ to be the pullback of the inclusion $\P^1_k \setminus \{2,3,4\} \hookrightarrow \P^1_k$ along $\operatorname{pr}_i \circ \iota$. In other words, we have the pullback diagrams in the category of $k$-schemes
\[
\begin{tikzcd}
U^{(i), \P^1_k \setminus \{\infty, 0, 1\}} \ar[r] \ar[d, hook] & \P_k^1 \setminus \{\infty, 0, 1\} \ar[d, hook]\\
	 U \ar[r, "\operatorname{pr}_i \circ \iota"] & \P^1_k
\end{tikzcd} \ \textrm{ and } \ 
\begin{tikzcd}
U^{(i), \P^1_k \setminus \{2,3,4\}} \ar[r] \ar[d, hook] & \P_k^1 
\setminus\{2,3,4\} \ar[d, hook]\\
	 U \ar[r, "\operatorname{pr}_i \circ \iota"] & \P^1_k.
\end{tikzcd}
\]

Let $\gamma \coloneq (\gamma_1, \ldots ,\gamma_n) \in \{\P^1_k \setminus \{\infty, 0, 1\}, \P^1_k \setminus \{2, 3, 4\} \}^n$.
Let $U_{\gamma} \coloneq \bigcap_{i=1}^n U^{(i), \gamma_i}$.
Then for any \(u \in U\), there is some \(\gamma \in \{\P^1_k \setminus \{\infty, 0, 1\}, \P^1_k \setminus \{2, 3, 4\} \}^n\) such that, for all $i = 1, ..., n$, we have  \(\operatorname{pr}_i \circ \iota (u) \in \gamma_i\). In particular, \(U = \bigcup_{\gamma} U_{\gamma}\) is a finite covering, where \(\gamma\) ranges over all the \(2^n\) elements of \(\{\P^1_k \setminus \{\infty, 0, 1\}, \P^1_k \setminus \{2, 3, 4\} \}^n\). 

We now claim that the collection $\left\{U_\gamma: \gamma \in \{\P^1_k \setminus \{\infty, 0, 1\}, \P^1_k \setminus \{2, 3, 4\} \}^n\right\}$ is a refinement of $U$ satisfying the required properties. More precisely, we claim that $U = \bigcup_{\gamma} U_\gamma$ is an open cover of $U$ and that for each finite extension $L/k$ and each \(\gamma\), the base change $(U_\gamma)_L$ satisfies VSA with respect to the obstruction datum $(\et, S_L, L)$. 
To see that each $U_\gamma \subset U$ is open, we just note that each of $U^{(i), \P^1_k \setminus \{\infty, 0, 1\}}$ and $U^{(i), \P^1_k \setminus \{2, 3, 4\}}$ is open in $U$, and since $U_\gamma$ is a finite intersection of opens, it is also open in $U$.
Now fix $\gamma = (\gamma_1,\ldots, \gamma_n)$ and a finite extension \(L/k\). To show that $(U_\gamma)_L$ satisfies VSA with respect to $(\et, S_L, L)$, consider the product $Y_\gamma \coloneq \gamma_1 \times_k \ldots \times_k \gamma_n \subset (\P^1_k)^n$. We note that, since each $U^{(i), \gamma_i}$ is the pullback of $\gamma_i \hookrightarrow \P^1_k$ along $\operatorname{pr}_i \circ \iota$, by construction $U_\gamma$ is the pullback of $Y_\gamma \hookrightarrow (\P_k^1)^n$ along $\iota$, that is, we have the pullback diagram
\[
\begin{tikzcd}
U_\gamma \ar[r, hook, "\iota"] \ar[d, hook] & Y_\gamma \ar[d, hook]\\
	 U \ar[r, hook, "\iota"] & (\P^1_k)^n.
\end{tikzcd} 
\]

We claim that $\iota(U_\gamma)$ is locally closed in $Y_\gamma$. To see this, note that $\iota: U \hookrightarrow (\P^1_k)^n$ is the composition of a closed immersion followed by an open immersion, both of which are stable under pullbacks \cite[Tag 01JY]{stacks-project}. It follows that the pullback map $\iota: U_\gamma \hookrightarrow Y_\gamma$ is also the composition of a closed immersion followed by an open immersion, and thus, by definition, is a locally closed immersion~\cite[Tag 01IM]{stacks-project}. This proves our claim. 

All these constructions are compatible with base change to $L$, so $\iota_L(U_\gamma)_L$ is locally closed in $(Y_\gamma)_L = (\gamma_1)_L \times_L \ldots \times_L (\gamma_n)_L$.
By assumption, the Section Conjecture holds over $L$ and therefore by \cite[Proposition 6.5]{CorwinSchlank} each $(\gamma_i)_L$ satisfies VSA with respect to the obstruction datum $(\et, S_L, L)$. By \cite[Lemma 2.5(ii)]{CorwinSchlank}, VSA is preserved under products, so $(Y_\gamma)_L$ satisfies VSA with respect to $(\et, S_L, L)$. Finally by \cite[Lemma 2.5(i)]{CorwinSchlank}, $(U_\gamma)_L$ also  satisfies VSA with respect to $(\et, S_L, L)$, since it is locally closed in $(Y_\gamma)_L$. Putting everything together, by considering for each $U$ in the open cover for $X$ its refinement $\{U_\gamma\}_\gamma$ constructed above, we have found a refined open cover of $X$ where each component base-changed to any finite extension $L/k$ satisfies  VSA with respect to the obstruction datum $(\et, S_L, L)$, as required. \end{proof}

\begin{remark} In fact, in the proof of Proposition \ref{prop:onecoverforall}, the only place where the Section Conjecture over all finite extensions $L/k$ is used is to guarantee that $\P^1_L \setminus \{\infty, 0, 1\}$ and $\P^1_L \setminus \{2, 3, 4\}$ both satisfy VSA with respect to the obstruction datum $(\et, S_L, L)$. Instead of requiring the full Section Conjecture to hold, we thus could just require that $\P^1_L \setminus \{\infty, 0, 1\}$ and $\P^1_L \setminus \{2, 3, 4\}$ both satisfy VSA with respect to the obstruction datum $(\et, S_L, L)$ for all finite extensions $L/k$, which is a much weaker requirement.
\end{remark}

The following corollary is an immediate consequence of Theorem~\ref{thm:cs-thm1} and Proposition~\ref{prop:onecoverforall}.

\begin{corollary}\label{cor:corwin-schlank-effective-refinement}
Let $X$ be a smooth, quasi-projective, geometrically integral variety over a number field $k$. Fix a non-empty (not necessarily finite) set of finite places \(S \subset \Omega_k\). Assume that the Section Conjecture as in \cite[Conjecture 6.4]{CorwinSchlank} holds for any finite extension $L/k$. Let $X = \bigcup_{i=1}^s U_i$ be a finite open affine cover, and let $X = \bigcup_{i=1}^s \bigcup_{j=1}^{r_i} U_{i,j}$ be the refinement of  $\bigcup_{i=1}^s U_i$ constructed in Proposition~\ref{prop:onecoverforall}. Consider the refined obstruction set for the collection $\left\{\left\{U_{i,j}(\A_{L, S_L})^{\et}\right\}_{\substack{i=1, ..., s\\ j = 1, ..., r_i}}\right\}_{L/k}$, that is, 
\[ 
\widetilde{Z}_{0, S}(X)^{\textrm{CS-}(\et, S, k)} \coloneq\refrec_{X, S}\left(\bigoplus_{L/k \ \textrm{finite}} \bigoplus_{i = 1}^{s}\bigoplus_{j=1}^{r_i} \Z[U_{i,j}(\A_{L, S_L})^{\et}]\right)  \subset Z_{0, S}(X).
\]
Then $Z_0(X) = \widetilde{Z}_{0, S}(X)^{\textrm{CS-}(\et, S, k)}$.  Moreover, for any degree $\delta \in \Z$, we have $Z^{\delta}_0(X) = \widetilde{Z}^{\delta}_{0, S}(X)^{\textrm{CS-}(\et, S, k)}$.
\end{corollary}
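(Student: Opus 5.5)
The plan is to deduce Corollary~\ref{cor:corwin-schlank-effective-refinement} directly from Theorem~\ref{thm:cs-thm1}, using the cover produced by Proposition~\ref{prop:onecoverforall} as the decomposition of $X_L$ for every finite $L/k$.

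First, I fix the refinement $X = \bigcup_{i=1}^s \bigcup_{j=1}^{r_i} U_{i,j}$ given by Proposition~\ref{prop:onecoverforall}. For each finite extension $L/k$, I take the decomposition $X_L = \bigcup_{i,j} (U_{i,j})_L$ in the sense of Definition~\ref{defn:generalised-refined-obstruction}. It is an open cover, since base change preserves open immersions and surjectivity of the induced map from the disjoint union. For each piece I take the obstruction set $\mathscr{X}^{(L)}_{i,j} = U_{i,j}(\A_{L,S_L})^{\et}$, meaning the projection to the $S_L$-components of $(U_{i,j})_L(\A_L)^{\et}$. Proposition~\ref{prop:onecoverforall} says that each $(U_{i,j})_L$ satisfies VSA with respect to $(\et, S_L, L)$, so
\[
U_{i,j}(\A_{L,S_L})^{\et} = U_{i,j}(L) \quad \text{for all } i,j \text{ and all finite } L/k.
\]
This is exactly the hypothesis used in Theorem~\ref{thm:cs-thm1}, with this cover in place of the one from \cite[Corollary 6.6]{CorwinSchlank}. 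That proof only used the VSA equality for each piece, not where the cover came from.

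Second, I redo the argument of Theorem~\ref{thm:cs-thm1} explicitly. Substituting the equality above, the direct sum inside $\refrec_{X,S}$ becomes $\bigoplus_{L}\bigoplus_{i,j}\Z[U_{i,j}(L)]$, with each $L$-point viewed diagonally in $Z_{0,S}(X_L)$. The map $\refrec_{X,S}$ sends such a point to $N^{S}_{L/k}$ of it. By \eqref{eq:locglobnorm}, restricted to the places in $S$, this is the image of the global cycle $N_{L/k}(x)$. Hence the image lies in $Z_0(X)$, embedded in $Z_{0,S}(X)$.

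For the reverse inclusion, every $L$-point of $X$ lies in some $U_{i,j}(L)$ because the pieces cover $X_L$. Remark~\ref{rmk:rec surj} then gives surjectivity onto $Z_0(X)$. So $Z_0(X) = \widetilde{Z}_{0,S}(X)^{\textrm{CS-}(\et,S,k)}$.

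Third, I intersect both sides with the degree-$\delta$ part. Degrees are compatible with the embedding $Z_0(X)\hookrightarrow Z_{0,S}(X)$ because $S$ is non-empty and a global cycle has the same degree at every place. This gives the degree-$\delta$ statement.

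The only step needing care is the identification $Z_0(X)\hookrightarrow Z_{0,S}(X)$. This map is injective because $S$ is non-empty and a 0-cycle is determined by its base change to a single completion $k_v$, since closed points of $X$ pull back to disjoint sets of closed points of $X_{k_v}$. I expect no real obstacle: the content is entirely in Proposition~\ref{prop:onecoverforall} and Theorem~\ref{thm:cs-thm1}.
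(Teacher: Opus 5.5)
Your proposal is correct and follows essentially the paper's route. The paper deduces the corollary directly from Proposition~\ref{prop:onecoverforall} (VSA for every base-changed piece $(U_{i,j})_L$) together with the argument of Theorem~\ref{thm:cs-thm1}, which, as you do, replaces each $U_{i,j}(\A_{L,S_L})^{\et}$ by $U_{i,j}(L)$ and uses surjectivity of the refined global recombining map (Remark~\ref{rmk:rec surj}); your remarks on injectivity of $Z_0(X)\hookrightarrow Z_{0,S}(X)$ and on degrees just make explicit what the paper leaves implicit.
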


\bibliographystyle{alpha}
\bibliography{refs}

\end{document}